\renewcommand{\leq}{\leqslant}
\renewcommand{\geq}{\geqslant}
\theoremstyle{definition}
\newtheorem{theorem}{Theorem}[section]
\newtheorem{theoremx}{Theorem}
\newtheorem{question}[theorem]{Question}
\newtheorem{corollary}[theorem]{Corollary}
\newtheorem{lemma}[theorem]{Lemma}
\newtheorem{proposition}[theorem]{Proposition}
\theoremstyle{definition}
\newtheorem{definition}[theorem]{Definition}
\newtheorem{example}[theorem]{Example}
\newtheorem{notation}[theorem]{Notation}
\newtheorem{remark}[theorem]{Remark}
\numberwithin{equation}{subsection}
\newcommand{\m}{\mathfrak{m}}
\newcommand{\n}{\mathfrak{n}}
\newcommand{\fpt}{{\operatorname{fpt}}}
\newcommand{\mpt}{{\operatorname{mpt}}}
\newcommand{\mpdim}{{\operatorname{mpdim}}}
\newcommand{\RR}{\mathbb{R}}
\newcommand{\NN}{\mathbb{Z}_{\geqslant 0}}
\newcommand{\ZZ}{\mathbb{Z}}
\newcommand{\QQ}{\mathbb{Q}}
\newcommand{\FF}{\mathbb{F}}
\newcommand{\LL}{\mathbb{L}}
\newcommand{\kk}{\mathbb{k}}
\newcommand{\bx}{\mathbf{x}}
\newcommand{\balpha}{\mathbf{\alpha}}
\newcommand{\cF}{\mathcal{F}}
\newcommand{\cP}{\mathcal{P}}
\newcommand{\cA}{\mathcal{A}}
\newcommand{\fp}{\mathfrak{p}}
\newcommand{\Fcp}{\mathcal{F}_M}
\newcommand{\rank}{\operatorname{rank}}
\newcommand{\reg}{\operatorname{reg}}
\newcommand{\Reg}{\operatorname{Reg}}
\newcommand{\pd}{\operatorname{pd}}
\newcommand{\sdim}{\operatorname{sdim}}
\newcommand{\Depth}{\operatorname{depth}}
\newcommand{\Hom}{\operatorname{Hom}}
\newcommand{\Ext}{\operatorname{Ext}}
\newcommand{\Tor}{\operatorname{Tor}}
\newcommand{\Ker}{\operatorname{Ker}}
\newcommand{\mpr}{\operatorname{mpr}}
\newcommand{\Ann}{\operatorname{Ann}}
\newcommand{\inte}{\operatorname{int}}
\newcommand{\Char}{\operatorname{char}}
\newcommand{\vol}{\operatorname{vol}}
\newcommand{\rk}{\operatorname{rank}}	
\newcommand{\Ht}{\operatorname{ht}}
\newcommand{\FDer}[1]{\stackrel{#1}{\to}}
\newcommand{\ls}{\leqslant}%
\newcommand{\gs}{\geqslant}
\newcommand{\mps}{\operatorname{mps}}
\author[A. De Stefani]{Alessandro De Stefani$^1$}
\address{Dipartimento di Matematica, Universit{\`a} di Genova, Via Dodecaneso 35, 16146 Genova, Italy}
\email{destefani@dima.unige.it}
\thanks{$^{1}$ The first author was partially supported by the PRIN 2020 project 2020355B8Y ``Squarefree Gröbner degenerations, special varieties and related topics".}
\author[J. Monta{\~n}o]{Jonathan Monta{\~n}o$^2$}
\address{School of Mathematical and Statistical Sciences, Arizona State University, P.O. Box 871804, Tempe, AZ 85287-18041}
\email{montano@asu.edu}
\thanks{$^{2}$ The second author was  supported by NSF Grant DMS \#2001645/2303605.}
\author[L. N{\'u}{\~n}ez-Betancourt]{Luis N{\'u}{\~n}ez-Betancourt$^3$}
\address{Centro de Investigaci{\'o}n en Matem{\'a}ticas, Guanajuato, Gto., M{\'e}xico}
\email{luisnub@cimat.mx}
\thanks{$^{3}$ The third author was supported by CONACyT Grant \#284598.}
\subjclass[2020]{Primary 20M32, 20M25, 13A35  ; Secondary 13C15  }
\keywords{Monoids, pure maps, seminormality, numerical invariants}
\begin{document}
\newcommand{\tens}{\otimes}
\newcommand{\hhtest}[1]{\tau ( #1 )}
\renewcommand{\hom}[3]{\operatorname{Hom}_{#1} ( #2, #3 )}

\title{Purity of monoids and characteristic-free splittings in semigroup rings}

\begin{abstract} 
Inspired by methods in prime characteristic in commutative algebra, we introduce and study combinatorial invariants of seminormal monoids. We relate such numbers with the singularities and homological invariants of the semigroup ring associated to the monoid. Our results are characteristic independent.
\end{abstract}

\maketitle

\maketitle
\setcounter{secnumdepth}{1}
\setcounter{tocdepth}{1}
 \tableofcontents
 
\section{Introduction}

Frobenius splittings have inspired a large number of results in commutative algebra, algebraic geometry, and representation theory. In this manuscript we seek to continue this approach in the context of  combinatorics of monoids. 
Given a monoid $M\subseteq \ZZ^q_{\gs 0}$ for some $q\in \ZZ_{>0}$,  and $m\in\ZZ_{>0}$, we study the pure $M$-submodules of $\frac{1}{m}M$ that are translations of $M$, which algebraically corresponds to free summands of $\kk[\frac{1}{m}M]$ as $\kk[M]$-module. 
It turns out that the purity of $M\subseteq \frac{1}{m}M$ detects both normality and seminormality (see Proposition \ref{redefnormal}).
The study of pure submodules, or equivalently of free summands, of normal monoids was already initiated by other authors in order to compute the $F$-signature of normal affine semigroup rings   \cite{ToricSingh,von2011f}. Moreover, the structure of $\frac{1}{m}M$ as $M$-module was described by Bruns and Gubeladze \cite{BGSemigroups,BrunsDivisorNormalMonoid} for normal monoids (see \cite{Shibuta} for a related result in prime characteristic).

In this manuscript we study combinatorial numerical invariants of a seminormal monoid. 
Our key motivation is that seminormality for a monoid can be seen as  a characteristic-free version of $F$-purity for affine semigroup rings. For more information and examples on seminormal monoids we direct the interested reader to Li's thesis on this subject \cite{Li}.

In Definition \ref{mstDef} we introduce the  notion of {\it  pure threshold} of a seminormal monoid $M$, denoted by $\mpt(M)$, which is motivated by the $F$-pure threshold in prime characteristic. This number can be described as the largest degree  of a pure translation of $M$ inside the cone $\RR_{\gs 0}M$ or, equivalently, of $\frac{1}{m}M$ for some $m$. 
We show that $\mpt(M)$ gives an upper bound for the Castelnuovo-Mumford regularity $\reg(\kk[M])$ defined in terms of local cohomology, and the Castelnuovo-Mumford regularity $\Reg(\kk[M])$ defined in terms of graded Betti numbers of $\kk[M]$ (see Section \ref{Background} for more details).

\begin{theoremx}[{Theorem \ref{ThmRegMPT}}]\label{MainThmSST}
Let $M$ be a seminormal monoid with a minimal set of generators  $\{\gamma_1,\ldots, \gamma_u\}$. Then,
$a_i(\kk[M])\leq -\mpt(M)$. As a consequence,
\[
\reg(\kk[M])=\max\{a_i(\kk[M])-i\}\leq \dim(\kk[M]) - \mpt(M)= \rank(M) - \mpt(M).
\]
 Moreover, if we present $R$ as $S/I$, where $S=\kk[x_1,\ldots,x_u]$ and each $x_i$ has degree $d_i:=\deg(x_i)=|\gamma_i|$ the degree of $\gamma_i$ for $i=1,\ldots,u$,   and $I \subseteq S$ is a homogeneous ideal, then 
\[
\Reg(\kk[M])= \sup\{\beta_i^S(M) - i \mid i \in \ZZ\} \leq  \rank(M) + \sum_{i=1}^u(d_i-1)- \mpt(M).
\] 
\end{theoremx}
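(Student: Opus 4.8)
The plan is to prove the three assertions in the order displayed, with the $a$-invariant bound $a_i(\kk[M])\le -\mpt(M)$ as the crux; both regularity statements then follow essentially formally. The mechanism is that a pure translation produces a graded split injection and that $H^\bullet_\m(-)$ is functorial and preserves splittings. First I would fix an index $i$ and a pure translation $\alpha+M\subseteq \frac1m M$ with $|\alpha|=\lambda$ and $m\ge 2$ (for $m=1$ one necessarily has $\lambda=0$). By the correspondence recalled in the introduction together with Proposition~\ref{redefnormal}, the cyclic submodule $\kk[M]\,t^{\alpha}\subseteq \kk[\tfrac1m M]$ is a free $\kk[M]$-summand, so as graded modules there is a degree-preserving split injection
\[
\iota\colon \kk[M](-\lambda)\hookrightarrow \kk[\tfrac1m M],\qquad 1\mapsto t^{\alpha}.
\]
Applying $H^i_\m(-)$, where $\m$ is the irrelevant ideal of $\kk[M]$ (its extension to the module-finite overring $\kk[\tfrac1m M]$ is irrelevant up to radical, so local cohomology is unchanged), yields a degree-preserving split injection
\[
H^i_\m(\kk[M])(-\lambda)\hookrightarrow H^i_\m(\kk[\tfrac1m M]).
\]

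Next I would compare top nonzero $|\cdot|$-degrees. Set $a:=a_i(\kk[M])$. The monoid isomorphism $\frac1m M\xrightarrow{\ \sim\ }M$, $\frac1m\beta\mapsto\beta$, multiplies degrees by $m$, so $H^i_\m(\kk[\tfrac1m M])$ is $H^i_\m(\kk[M])$ with degrees divided by $m$ and its top nonzero degree equals $\tfrac1m a$, while that of $H^i_\m(\kk[M])(-\lambda)$ equals $a+\lambda$. Injectivity forces $a+\lambda\le \tfrac1m a$, hence $a\le -\tfrac{m}{m-1}\lambda\le -\lambda$ because $\lambda\ge 0$. As this holds for every $i$ and every pure translation, taking the supremum over $\lambda$ and using the description of $\mpt(M)$ as the largest degree of a pure translation gives $a_i(\kk[M])\le -\mpt(M)$ for all $i$.

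For the first regularity bound I would combine this with the vanishing $H^i_\m(\kk[M])=0$ for $i>\dim\kk[M]=\rank(M)$: since every contributing index satisfies $i\le \rank(M)$ and $a_i(\kk[M])\le-\mpt(M)$, the cohomological formula for $\reg$ recalled in Section~\ref{Background} is bounded above by $\rank(M)-\mpt(M)$.

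Finally, for the Betti-number regularity I would pass from the local-cohomological regularity over $\m$ to $\Reg$ over the weighted polynomial ring $S$ with $\deg x_i=d_i$. Graded local duality over such an $S$ uses the canonical module $\omega_S=S(-\sum_i d_i)$, and comparing the top degrees of $H^\bullet_\m(\kk[M])$ with the generator degrees of a minimal $S$-free resolution gives the standard shift
\[
\Reg_S(\kk[M])=\reg(\kk[M])+\sum_{i=1}^u(d_i-1),
\]
the discrepancy being exactly $-\reg(S)=\sum_{i=1}^u(d_i-1)$ (since $\Reg_S(S)=0$). Substituting the previous bound yields $\Reg(\kk[M])\le \rank(M)+\sum_{i=1}^u(d_i-1)-\mpt(M)$. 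I expect the main obstacle to be the first step: justifying rigorously that a pure translation gives a graded split inclusion, identifying the local cohomology of $\kk[M]$ with the suitably rescaled local cohomology of $\kk[\tfrac1m M]$, and tracking that the rescaling sends the top degree $a$ to $\tfrac1m a$. The two regularity deductions are then formal, given the comparison between cohomological and Betti regularity over weighted polynomial rings.
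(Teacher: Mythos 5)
Your proposal is correct and follows essentially the same route as the paper: a pure translation $\alpha+M\subseteq\frac1m M$ gives a graded split injection $R(-|\alpha|)\hookrightarrow R^{1/m}$ (the paper's Proposition \ref{Vmsplit}), applying $H^i_\m(-)$ and comparing top degrees yields $a_i(R)+|\alpha|\le a_i(R)/m$, and the $\Reg$ bound then follows from the shift formula of Lemma \ref{Lemma regularity}. The only cosmetic difference is that you conclude $a_i(R)\le-|\alpha|$ directly from the inequality for a single $m$, whereas the paper runs over $m^t$ and lets $t\to\infty$ via Proposition \ref{mtinfty}; both are valid.
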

Theorem \ref{MainThmSST} allows us to give an upper bound for the degrees of generators of the defining ideal $I$. We also show that $\mpt(M)$  is a rational if $M$ is a normal  (see Proposition \ref{mstRat}). Despite $\mpt(M)$ being inspired by $F$-pure thresholds, these  numbers do not always coincide (see Example \ref{ExVeronese} and Remark \ref{remark mpt VS fpt}). In addition, $\mpt(M)$ is defined independently of the field $\kk$ and so it is a characteristic-free invariant, while the $F$-pure threshold is only defined when $\kk$ has prime characteristic.

We introduce the {\it pure prime ideal} $\cP(M)$, and the {\it pure prime face}  $\cF_M$, of a  seminormal monoid $M$ (see Corollary \ref{FaceP}, and Definitions \ref{indeedprime} and \ref{msd}). 
The former emulates the splitting prime ideal of an $F$-pure ring, while the latter is related to the quotient of a ring by its splitting prime. In fact, the  submonoid $\cF_M \cap M$ is normal (see Corollary \ref{FaceP}). We note that  the rank of $\cF_M \cap M$  is a monoid version of the splitting dimension and so we call it the {\it pure dimension} and denote it by $\mpdim(M)$. It turns out that this rank is  equal to the rank of $M$ if and only if $M$ is normal, and it is non-negative if and only if $M$ is seminormal  (see Corollary \ref{msdCor}). Therefore, in some sense, $\mpdim(M)$ measures how far a seminormal monoid is from being normal. Furthermore, $\mpdim(M)$ is related to the depth of $\kk[M]$ as the following theorem shows.

 \begin{theoremx}[{Theorem \ref{ThmDepthMPT}}]\label{MainThmDepth}
If $M$ is a seminormal monoid, then 
$\mpdim(M)\leq \Depth(\kk[M]).$
\end{theoremx}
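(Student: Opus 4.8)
The plan is to produce an explicit regular sequence on $R:=\kk[M]$ of length $d:=\mpdim(M)$. Write $N:=\cF_M\cap M$, so $\rank(N)=d$ and, by Corollary \ref{FaceP}, $N$ is normal; hence $\kk[N]$ is Cohen--Macaulay of dimension $d$ by Hochster's theorem. Since $N$ is a face of $M$, the subring $\kk[N]\subseteq R$ is simultaneously the quotient $R/\cP(M)$, and the face map $t^a\mapsto t^a$ for $a\in N$, $t^a\mapsto 0$ otherwise, is a ring retraction $R\onto\kk[N]$ splitting the inclusion. Because $N$ is normal of rank $d$, I can choose $\QQ$-linearly independent $v_1,\dots,v_d\in N$ whose monomials $\theta_i:=t^{v_i}$ form a homogeneous system of parameters of $\kk[N]$; by Cohen--Macaulayness these are a $\kk[N]$-regular sequence. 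As each $\theta_i$ lies in the irrelevant maximal ideal $\m$ of $R$, it suffices to show that $\theta_1,\dots,\theta_d$ is a regular sequence on $R$, for then $\Depth(R)\ge d$.

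To test regularity on $R$, set $A:=\kk[\theta_1,\dots,\theta_d]$, a polynomial subring (the $v_i$ being independent), and let $L:=\sum_i\ZZ v_i\subseteq\ZZ N$. Since $\theta_i$ acts on $R$ by translation by $v_i\in L$, the ring decomposes as an $A$-module along the cosets of $L$,
\[
R=\bigoplus_{[a]\in M/L}R_{[a]},\qquad R_{[a]}=\bigoplus_{p\in M\cap(a+L)}\kk\,t^p,
\]
and $\theta_1,\dots,\theta_d$ is $R$-regular if and only if it is regular on each slice $R_{[a]}$. Identifying $L\cong\ZZ^d$ via $v_1,\dots,v_d$, each slice $M\cap(a+L)$ becomes an order filter $P_a\subseteq\ZZ^d$ (closed under adding each $v_i$, as $v_i\in M$), and $R_{[a]}$ is the associated monomial $A$-module with $\theta_i$ acting as the $i$-th variable. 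Thus regularity of $\theta_1,\dots,\theta_d$ on $R_{[a]}$ amounts to the vanishing $H^j_{(\theta_1,\dots,\theta_d)}(R_{[a]})=0$ for $j<d$, which the Ishida (\v Cech) complex computes degree by degree as the reduced simplicial cohomology of the subcomplex of $2^{\{1,\dots,d\}}$ recording along which coordinate directions a given degree escapes $P_a$.

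Two slices are automatic. First, because $\cF_M$ is a face one has $M\cap\ZZ N=N$, so the coset of $0$ recovers the normal monoid $M\cap L=\RR_{\gs 0}N\cap L$; hence $R_{[0]}=\kk[M\cap L]$ is a finite, normal, Cohen--Macaulay, and therefore free $A$-module, on which the $\theta_i$ are trivially regular. Every other slice sits inside $\cP(M)$, so the entire content is to establish the acyclicity, below the top dimension, of the slice complexes for the cosets contained in $\cP(M)$.

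The crux is exactly this acyclicity, and it is where the defining property of the pure prime face must enter. The point is that $\cF_M$ is, by construction, the maximal face along which $M$ is pure, so that modulo $\cP(M)$ the monoid becomes the normal monoid $N$; I expect to combine the filled-face description of the seminormal monoid $M$ with the normality of $N$ (Corollary \ref{FaceP}) to show that each slice $M\cap(a+L)$, taken transverse to $\cF_M$, is Cohen--Macaulay over $A$, forcing the required vanishing. Pinning down precisely how the maximality built into $\cF_M$ trivializes the lower reduced cohomology of every slice complex is the main obstacle; once it is in place, $\theta_1,\dots,\theta_d$ is an $R$-regular sequence and $\mpdim(M)\le\Depth(\kk[M])$ follows. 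As a sanity check I would first treat the normal case, where $N=M$, every slice is free over $A$, and the inequality reduces to the Cohen--Macaulayness of $\kk[M]$, before isolating the combinatorial lemma on filters arising from slices of a seminormal monoid transverse to its pure prime face.
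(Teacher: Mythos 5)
There is a genuine gap, and it occurs before the obstacle you yourself flag. Your first step is to choose $\QQ$-linearly independent $v_1,\dots,v_d\in N=\cF_M\cap M$ whose monomials $\theta_1,\dots,\theta_d$ form a homogeneous system of parameters of $\kk[N]$. Such a monomial system of parameters exists only when the cone $C(N)$ is simplicial. Indeed, if $\theta_1,\dots,\theta_d$ were a regular sequence of length $d=\dim\kk[N]$, the quotient $\kk[N]/(\theta_1,\dots,\theta_d)$ would have finite length, i.e.\ $N\setminus\bigcup_{i}(v_i+N)$ would be finite; but for each extreme ray $\rho$ of $C(N)$, a point $a\in\rho\cap N$ lies in $v_i+N$ only if $v_i\in\rho$ (use a supporting functional vanishing exactly on $\rho$), so each extreme ray must contain one of the $v_i$, forcing $C(N)$ to have at most $d$ extreme rays. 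Already for $M=N$ the normal rank-$3$ monoid over the unit square, with $\kk[M]\cong\kk[x,y,u,v]/(xy-uv)$, there are four extreme rays and hence no monomial regular sequence of length $3$ at all, even though the theorem's conclusion $\mpdim(M)=3\leq\Depth(\kk[M])$ holds. So the normal base case you propose as a sanity check is exactly where the method breaks: the inequality is true, but it is not witnessed by a monomial regular sequence. The same issue invalidates the claim that $R_{[0]}=\kk[M\cap L]$ is a finite free $A$-module. Finally, even in the simplicial case, the step you identify as ``the crux'' --- that every slice $M\cap(a+L)$ with $a\in\cP(M)$ yields an $A$-module on which the $\theta_i$ are regular --- carries essentially the entire content of the theorem and is left unproved; no property of $\cF_M$ is actually used anywhere in the argument.

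For comparison, the paper's proof is non-constructive and avoids regular sequences entirely. Writing $R=S/J$ with $S$ a polynomial ring in $u$ variables and $t=\Depth(R)$, it shows $\Ann_R\left(\Ext^{u-t}_S(R,S)\right)\subseteq \bx^{\cP(M)}$: if a homogeneous $f$ with $\log(f)\in mV_m(M)$ annihilated this $\Ext$ module, then by Matlis duality multiplication by $f$ would be zero on $H^t_\m(R)$, contradicting the fact that the composition $R\to R^{1/m}\to R^{1/m}\to R$ (inclusion, multiplication by $f^{1/m}$, then $\phi^m_{\log(f)}$) is the identity and hence splits off $H^t_\m(R)$ from $H^t_\m(R^{1/m})$. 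This gives $\dim\Ext^{u-t}_S(R,S)\geq\dim R/\bx^{\cP(M)}=\mpdim(M)$, while the Gorenstein injective resolution of $S$ gives $\dim\Ext^{u-t}_S(R,S)\leq t$. If you want to pursue your approach, you would need both to restrict to non-monomial (general homogeneous) sequences and to supply the missing acyclicity of the slices; as it stands the proposal does not prove the statement.
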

\noindent We point out that Theorem \ref{MainThmDepth} recovers Hochster’s result that normal semigroup rings are Cohen-Macaulay \cite{HochsterToric}. 

Finally, we consider the growth of the number of disjoint pure translations of $M$ in $\frac{1}{m}M$ as $m$ varies. More specifically, if $m \in \ZZ_{>0}$ is such that $\frac{1}{m}M \cap \ZZ M = M$, we define  $$V_m(M) := \left\{\alpha \in \frac{1}{m}M \mid (\alpha + M) \subseteq \frac{1}{m}M \text{ is pure}\right\}.$$

\begin{theoremx}[{Theorem \ref{ratioExists}}]\label{MainThmRatio}
Let $M$ be a seminormal monoid, $\cA(M)=\{m\in\ZZ_{>0} \; | \; \frac{1}{m}M\cap \ZZ M=M\}$, and $s=\mpdim(M)$. Then,
$$
\mpr(M) := \lim\limits_{t\to\infty} \frac{|V_{m_t}(M)|}{m_t^s}
$$
exists and it is positive for every increasing sequence $m_t\in \cA(M)$.
Furthermore, if $M$ is normal, then  $\mpr(M)\in \QQ_{>0}$.
\end{theoremx}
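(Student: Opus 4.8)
The plan is to reduce the counting of $V_m(M)$ to a lattice-point count inside a fixed bounded region and then to invoke a volume asymptotic. First I would use the combinatorial description of purity developed earlier---Proposition~\ref{redefnormal} together with the analysis of the pure prime face in Corollary~\ref{FaceP}---to show that an element $\alpha\in\frac1m M$ yields a pure translation $\alpha+M\subseteq\frac1m M$ precisely when $\alpha$ lies in a region $\Omega\subseteq\RR_{\gs 0}M$ carved out by finitely many conditions coming from the supporting hyperplanes of the cone $\RR_{\gs 0}M$ and from the behaviour of $M$ along $\cF_M$. The hypothesis $m\in\cA(M)$, that is $\frac1m M\cap\ZZ M=M$, is exactly what guarantees that the points of $\frac1m M$ recorded by $V_m(M)$ form the expected refinement of the lattice $\ZZ M$ inside $\Omega$, with no spurious contribution from $\ZZ M\setminus M$.

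The next---and, I expect, hardest---step is to determine the dimension of the relevant region. Since $\cF_M\cap M$ is normal of rank $s=\mpdim(M)$ (Corollary~\ref{FaceP} and Corollary~\ref{msdCor}), I would argue that the pure locus concentrates along the $s$-dimensional subspace $\RR\,\cF_M$: transverse to $\cF_M$ purity confines $\alpha$ to a bounded window, whereas along $\cF_M$ the pure translates spread out freely. Thus $\Omega$ should be, up to a bounded fibration, an $s$-dimensional bounded set, and $V_m(M)$ should be in bijection with the points of a fixed rank-$s$ lattice $L$ (generated by $\cF_M\cap M$), scaled by $1/m$, that lie in a bounded $s$-dimensional region $\Omega_0$ of positive $s$-dimensional measure. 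Pinning down that the pure locus is genuinely $s$-dimensional rather than full-dimensional, and that the transverse fibres are bounded and eventually stabilise, is the crux of the argument, and it is precisely here that seminormality (rather than mere normality) of $M$ is used.

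Granting this description, the existence of the limit along every increasing sequence $m_t\in\cA(M)$ follows from a standard lattice-point asymptotic. The boundary of $\Omega_0$ is contained in a finite union of affine hyperplanes and hence has $s$-dimensional measure zero, so $\Omega_0$ is Jordan measurable; consequently, for any refinements $\frac1{m_t}L$ with $m_t\to\infty$,
\[
\frac{|V_{m_t}(M)|}{m_t^{\,s}}\;\longrightarrow\;\vol_s(\Omega_0),
\]
where $\vol_s(\Omega_0)$ denotes the $s$-dimensional volume of $\Omega_0$ normalised so that a fundamental domain of $L$ has volume one. Since this value is independent of the sequence, existence and the asserted independence of $\{m_t\}$ are obtained simultaneously.

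Finally, positivity is immediate once $\Omega_0$ has nonempty interior inside its $s$-dimensional span, which I would deduce from the openness of the purity condition in the directions transverse to $\cF_M$ together with the existence of at least one pure translation. For rationality, when $M$ is normal one has $s=\rank(M)$ (Corollary~\ref{msdCor}), the region $\Omega=\Omega_0$ is a rational polytope cut out by the rational facet inequalities of $\RR_{\gs 0}M$, and $\frac1m M$ is a rational lattice; hence $|V_m(M)|$ coincides with an Ehrhart quasi-polynomial of degree $s$ in $m$ whose leading coefficient is the rational normalised volume of $\Omega$, giving $\mpr(M)\in\QQ_{>0}$. In the merely seminormal case $\Omega_0$ need not be a rational polytope, so this Ehrhart argument is unavailable and one asserts only existence and positivity.
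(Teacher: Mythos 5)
Your overall strategy coincides with the paper's: show that $V_m(M)$ is exactly the set of points of the scaled lattice $\frac{1}{m}(M\cap\cF_M)$ lying in a fixed bounded region of the $s$-dimensional face $\cF_M$, and then read off the limit as a Riemann sum converging to a (relative) volume. However, two of the steps you wave at are precisely where the work lies, and one of them as you state it is wrong.

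First, the measurability of the limiting region. You assert that the boundary of $\Omega_0$ is ``contained in a finite union of affine hyperplanes,'' but for a merely seminormal $M$ there is no reason for this: the region the paper constructs is $B(M)=\bigcup_{m\in\cA(M)}B_m(M)$ with $B_m(M)=\bigcup_{\alpha\in V_m(M)}\bigl((\alpha-\cF_M)\cap\cF_M\bigr)$, an increasing union over infinitely many $m$ of finite unions of translated cones, and its boundary need not be polyhedral. You yourself concede at the end that $\Omega_0$ ``need not be a rational polytope,'' which contradicts the finite-hyperplane claim. The paper instead proves Jordan measurability by a Lebesgue density argument (Lemma \ref{BMlemma}(1)): because $B(M)$ is downward closed under subtraction of elements of $\cF_M$, every boundary point $x\notin\partial\cF_M$ satisfies $x+\cF_M^{\circ}\subseteq\cF_M\setminus\overline{B(M)}$, so the density of $\partial B(M)$ at $x$ is bounded away from $1$, forcing $\mu(\partial B(M))=0$. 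Without some such argument your Riemann-sum step does not go through in the seminormal case.

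Second, the identity $V_m(M)=\frac{1}{m}(M\cap\cF_M)\cap\Omega_0$ for a single region $\Omega_0$ \emph{independent of $m$} is asserted (``$V_m(M)$ should be in bijection with\ldots'') but not established; it is the content of Lemma \ref{BMlemma}(2)--(3) and uses both the multiplicativity of $\cA(M)$ (Lemma \ref{mnboth}), the compatibility $\frac{1}{m}M\cap V_{mn}(M)=V_m(M)$, and the normality of $M\cap\cF_M$ (Corollary \ref{FaceP}). Relatedly, your geometric picture is muddled: the pure translates do not ``spread out freely along $\cF_M$'' --- they lie exactly on $\cF_M$ (since $mV_m(M)\subseteq M\setminus\cP(M)=M\cap\cF_M$) and are confined to a \emph{bounded} subset of it, namely the complement of $\bigcup_i(g_i+\cF_M)$ for generators $g_i$ of $M\cap\cF_M$; boundedness along the face is what makes the count finite and of order $m^s$. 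The final Ehrhart/rationality argument in the normal case is fine and matches the paper's use of $B(M)=\Delta$.
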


We call the limit in Theorem \ref{MainThmRatio}  the {\it pure ratio} of $M$. If the field has prime characteristic, this number coincides with the splitting ratio 
of $\kk[M]$ \cite{AE}. A consequence of Theorem \ref{MainThmRatio} is that the value of the $F$-splitting ratio depends only on the structure of $M$, and so it is independent of the characteristic of the field as long as $\kk[M]$ is $F$-pure. Finally, using this result we give a monoid version of a celebrated Theorem of Kunz \cite[Theorem 2.1]{Kunz} which  characterizes regularity of rings of prime characteristic in terms of Frobenius (see Theorem \ref{ThmKunz}).

Throughout this article we  adopt the following notation.

\begin{notation}\label{notation}
Let $\kk$  be  a field of any characteristic and $q$ a positive integer. Let $M\subseteq \ZZ^q_{\gs 0}$ be an affine monoid, i.e., a finitely generated submonoid of $\ZZ^q$. We fix $\{\gamma_1,\ldots, \gamma_u\}$ a minimal set of generators of $M$.  Let $\ZZ M$ denote  the group generated by $M$ and $C(M) = \RR_{\gs 0}  M$  the cone generated by $M$. 
\end{notation}
\section{Background}\label{Background}

In this section we include some preliminary information that is needed in the rest of the paper.

\subsection{Affine monoids and affine semigroup rings} 
For proofs of the claims in this subsection and further information about affine monoids we refer the reader to Bruns and Gubeladze's book \cite{bruns2009polytopes}.
 Let $M\subseteq \ZZ_{\gs 0}^q$ be an affine monoid. A subset $U\subseteq \QQ^q$ is an {\it $M$-module} if $U+M\subseteq U$. 
An $M$-module $U$ is an {\it ideal} if it is contained in $M$. An ideal $U\subseteq M$ is {\it prime} if whenever $a+b\in U$ with $a,b\in M$, we must have $a\in U$ or $b\in U$. The {\it rank} of $M$ is the dimension of the  $\QQ$-vector space $\QQ\otimes_{\ZZ} \ZZ M$. 

Let $R=\kk[M]\subseteq \kk[\bx]:=\kk[x_1,\ldots, x_q]$ be the affine semigroup associated to $M$. As a $\kk$-vector space, $R$ is generated by the monomials 
$\{x^\alpha\mid \alpha\in M\}$. 
We note that the monomial ideals of $R$ are precisely those generated by $\{x^\alpha\mid \alpha\in U\}$ for some ideal $U\subseteq M$.
 Under this correspondence, prime monomial ideals of $R$ correspond to prime ideals of $M$. For every $M$-module 
 $U\subseteq \QQ^q$ we have a corresponding $R$-module
 $R U:= \{x^{\alpha + \eta}\mid \alpha\in M, \eta\in U\}$ in the algebraic closure of $\kk(x_1,\ldots, x_q)$. Moreover, we have $\dim(R)=\rank(M)$.

\subsection{Graded algebras and modules} A non-negatively graded algebra $A$ is a ring that admits a direct sum decomposition $A= \bigoplus_{j \geq 0} A_j$ of Abelian groups such that $A_i \cdot A_j \subseteq A_{i+j}$. It follows from this that $A_0$ is a ring, and each $A_i$ is an $A_0$-module. If we let $A_+ = \bigoplus_{j > 0} A_j$, then $A_+$ is an ideal of $A$, called the irrelevant ideal. 

Throughout this manuscript we will make the assumption that $A$ is Noetherian or, equivalently, that there exist finitely many elements $a_1,\ldots,a_n \in A_+$ such that $A=A_0[a_1,\ldots,a_n]$, which can be assumed to be homogeneous of degrees $d_1,\ldots,d_n$. In this case, note that $A$ is a quotient of a polynomial ring $A_0[x_1,\ldots,x_n]$ by a homogeneous ideal.

A $\ZZ$-graded $A$-module is an $A$-module $N$ that admits a direct sum decomposition $N=\bigoplus_{j \in \ZZ}N_j$ of Abelian groups, and such that $A_i \cdot N_j \subseteq N_{i+j}$. As a consequence, each $N_i$ is an $A_0$-module. Moreover, if $N$ is Noetherian there exists $i_0 \in \ZZ$ such that $N_i = 0$ for all $i<i_0$; on the other hand, if $N$ is Artinian there exists $j_0 \in \ZZ$ such that $N_j=0$ for all $j>j_0$.

Given a $\ZZ$-graded $A$-module $N$, and an integer $j \in \ZZ$, we define the shift $N(j)$ as the $\ZZ$-graded $A$-module whose $i$-th graded component is $ N(j)_i=N_{i+j}$. In particular, $A(-j)$ is a free graded $A$-module of rank one with generator in degree $j$.

\subsection{Graded local cohomology and Castelnuovo-Mumford regularity}
In this subsection we recall general properties of local cohomology. We refer the interested reader to Brodmann and Sharp's book on this subject  \cite{BrodmannSharp}.
Let $\kk$ be a field and $S = \kk[x_1,\ldots,x_n]$, with $\deg(x_i) = d_i>0$. Let $N$ be a finitely generated $\ZZ$-graded $S$-module. If we let $\m=(x_1,\ldots,x_n)$, then the graded local cohomology modules $H^i_\m(N)$ are Artinian and $\ZZ$-graded.
\begin{definition} 
Let $a_i(N) = \sup\{j \in \ZZ \mid H^i_\m(N)_j \ne 0\}$ be the $i$-th $a$-invariant of $N$. If $N \ne 0$ we define the Castelnuovo-Mumford regularity of $N$ as $\reg(N) = \sup\{a_i(N) + i \mid i=0,\ldots,n\}$. On the other hand, if $N=0$ we let $\reg(N) = -\infty$. 
\end{definition}

In the standard graded case, $\reg(N)$ has a well-known interpretation in terms of graded Betti numbers of $N$. In our setup this is still the case, but the degrees of the algebra generators of $S$ must be taken into account. 
For $i \in \NN$, let $\beta_i^S(N) = a_0(\Tor_i^S(N,\kk)) \in \ZZ \cup \{-\infty\}$. As another way to see this, for a non-zero graded $S$-module $T$ let $\beta(T)$ be the maximum degree of an element in a minimal homogeneous generating set of $T$. If
\[
\xymatrixcolsep{10mm}
\xymatrix{
F_\bullet: 0 \ar[r] & F_c \ar[r] & F_{c-1}  \ar[r] &\ldots \ldots \ar[r] & F_1 \ar[r] & F_0 \ar[r] & N \ar[r] & 0,
}
\]
is a minimal graded free resolution of $N$ where $c:=\pd(N)$ is the projective dimension of $N$, then $\beta_i^S(N) = \beta(F_i)$.

\begin{definition}
For $N \ne 0$ we let $\Reg(N) = \sup\{\beta_i^S(N) - i \mid i \in \ZZ\}$, while for $N=0$ we let $\Reg(N) = -\infty$.
\end{definition}
In the standard graded case, that is, when $d_1=\ldots = d_n=1$, then $\reg(N)= \Reg(N)$. In our more general scenario, we still have the following relation between the two notions of regularity. 
\begin{lemma} \label{Lemma regularity} With the above notation we have that
\[
\Reg(N) = \reg(N) + \sum_{i=1}^n(d_i-1).
\]
\end{lemma}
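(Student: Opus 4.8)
The plan is to route everything through graded local duality, so that the correction term $\sum_{i=1}^n(d_i-1)$ appears as the shift in the graded canonical module of $S$. Write $D:=\sum_{i=1}^n d_i$, and note $D-n=\sum_{i=1}^n(d_i-1)$. Since $S=\kk[x_1,\dots,x_n]$ with $\deg(x_i)=d_i$ is a polynomial ring, it is Gorenstein with graded canonical module $\omega_S=S(-D)$; concretely, $H^p_\m(S)=0$ for $p\neq n$, while $H^n_\m(S)$ is nonzero exactly in degrees $\le -D$, with one-dimensional top component in degree $-D$. I would fix a minimal graded free resolution $F_\bullet\to N$ with $F_i=\bigoplus_j S(-j)^{\beta_{i,j}}$, so that $\beta_i^S(N)=\max\{j:\beta_{i,j}\neq 0\}$ and, by minimality, every entry of each differential lies in $\m$ (the case $N=0$ being trivial).

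First I would apply $H^\bullet_\m(-)$ to $F_\bullet$. Because $H^p_\m(-)$ vanishes on free modules for $p\neq n$, the resulting spectral sequence is concentrated in a single row and collapses to give $H^{n-i}_\m(N)\cong H_i\big(H^n_\m(F_\bullet)\big)$; equivalently one may quote graded local duality $H^i_\m(N)^\vee\cong \Ext^{n-i}_S(N,\omega_S)$ and compute $\Ext$ from $\Hom_S(F_\bullet,S)$. A degree count then shows that $H^n_\m(F_i)=\bigoplus_j H^n_\m(S)(-j)^{\beta_{i,j}}$ has top nonzero degree exactly $\beta_i^S(N)-D$, and that in this top degree only the summands $S(-\beta_i^S(N))$ contribute, giving a component isomorphic to $\kk^{\beta_{i,\beta_i^S(N)}}$.

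The easy inequality is then immediate: as $H^{n-i}_\m(N)$ is a subquotient of $H^n_\m(F_i)$, one has $a_{n-i}(N)\le \beta_i^S(N)-D$ for every $i$; reindexing by $k=n-i$ and taking maxima gives $\reg(N)\le \Reg(N)-(D-n)$.

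The heart of the argument, and the step I expect to be the main obstacle, is the reverse inequality, where being a subquotient is not enough and I must show that the extremal top-degree class actually survives to homology. The plan is to pick $i$ to be the largest index with $\beta_i^S(N)-i=\Reg(N)$ and set $b:=\beta_i^S(N)$. Minimality handles the outgoing differential: on $H^n_\m$ it carries the degree-$(b-D)$ part of the $S(-b)$-summands into target summands $S(-j')$ with $j'\le b-1$, whose top degrees are $\le b-1-D<b-D$, so this part lies in the kernel. Extremality handles the incoming differential: maximality of $i$ forces $\beta_{i+1}^S(N)\le b$, and a nonzero $H^n_\m$-map into the degree-$(b-D)$ strand of an $S(-b)$-summand would require a summand $S(-j'')$ of $F_{i+1}$ with $j''\ge b+1$, of which there are none. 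Hence the degree-$(b-D)$ homology equals $\kk^{\beta_{i,b}}\neq 0$, so $a_{n-i}(N)=b-D$ and $\reg(N)\ge a_{n-i}(N)+(n-i)=\Reg(N)-(D-n)$. Combining the two inequalities and using $D-n=\sum_{i=1}^n(d_i-1)$ yields the claim. The delicate point throughout is that both differentials must be controlled simultaneously on the single extremal strand, which is exactly why minimality of the resolution and the choice of the largest extremal index are both essential.
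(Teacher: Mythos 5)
Your argument is correct, and it takes a genuinely different route from the paper. The paper proceeds by induction on $c=\pd_S(N)$ via the syzygy sequence $0 \to \Omega \to F_0 \to N \to 0$: it combines the resulting long exact sequence in local cohomology with a careful case analysis (ruling out $\reg(\Omega)-1>\reg(N)$ by comparing top degrees with $a_n(F_0)$), and it crucially imports the external inequality $\beta(N) \leq \reg(N)+\sum_{i=1}^n(d_i-1)$ from the cited reference to close both directions. You instead collapse the whole resolution at once: the one-row spectral sequence (equivalently, graded local duality computed from $\Hom_S(F_\bullet,S)$) gives $H^{n-i}_\m(N) \cong H_i(H^n_\m(F_\bullet))$, the subquotient observation yields $\reg(N)\leq \Reg(N)-(D-n)$ immediately, and for the reverse inequality you show the extremal degree-$(b-D)$ strand survives to homology --- minimality of the resolution kills the outgoing differential on that strand, and your choice of the \emph{largest} extremal index $i$ (forcing $\beta_{i+1}^S(N)\leq b$) together with minimality kills the incoming one. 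Your checks are all sound: the top degree of $H^n_\m(S)(-j)$ is $j-D$ with one-dimensional top component, maps $S(-j'')\to S(-b)$ with $j''\leq b$ vanish by minimality, and degree bookkeeping rules out every other contribution in degree $b-D$. What your approach buys is self-containment (you reprove rather than cite the bound on $\beta(N)$) and a single uniform argument identifying exactly which $a$-invariant attains the extremal value; what the paper's approach buys is avoidance of spectral-sequence machinery, using only the long exact sequence of local cohomology. One cosmetic imprecision: $H^n_\m(S)$ is not nonzero in \emph{every} degree $\leq -D$ when the $d_i$ are not all equal to $1$, but your argument only uses that it vanishes above degree $-D$ and is one-dimensional there, so nothing is affected.
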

\begin{proof}
We may assume that $n>0$, otherwise the claim is trivial. We prove the statement by induction on $c=\pd_S(N)$. If $c=0$ then $N$ is free, and it is clear that $\Reg(N) = \beta(N)$. On the other hand, $a_i(N) = 0$ for all $i \ne n$, while $a_n(N) = a_n(S) + \beta(N) = -\sum_{i=1}^n d_i + \beta(N)$. It follows that $\reg(N) = n+a_n(N) = \beta(N) + \sum_{i=1}^n (1-d_i) = \Reg(N) + \sum_{i=1}^n(1-d_i)$, and the base case follows. 

Now assume that $c \geq 1$. We have a graded short exact sequence $0 \to \Omega \to F_0 \to N \to 0$, where $F_0$ is the first free module in a minimal free resolution of $N$, and $\pd_S(\Omega) = c-1$. By induction we have that $\Reg(\Omega) = \reg(\Omega) + \sum_{i=1}^n(d_i-1)$. Moreover, it is clear from the definitions that $\Reg(N) = \max\{\beta(N),\Reg(\Omega)-1\} = \max\{\beta(N),\reg(\Omega)+\sum_{i=1}^n(d_i-1)-1\}$.  We have that $\beta(N) \leq \reg(N) + \sum_{i=1}^n (d_i-1)$ \cite[Proposition 3.1]{DSMNB}, and therefore the above equality gives that 
\begin{equation}
\label{ineq} \Reg(N) \leq \max\{\reg(N),\reg(\Omega)-1\} + \sum_{i=1}^n (d_i-1).
\end{equation}
We now show that $\max\{\reg(N),\reg(\Omega)-1\} = \reg(N)$. The short exact sequence $0 \to \Omega \to F_0 \to N \to 0$ yields graded isomorphisms $H^i_\m(N) \cong H^{i+1}_\m(\Omega)$ for all $i<n-1$, and a graded exact sequence $0 \to H^{n-1}_\m(N) \to H^n_\m(\Omega) \to H^n_\m(F_0) \to H^n_\m(N) \to 0$. If we had $\reg(\Omega)-1 > \reg(N)$, then necessarily $\reg(\Omega) = a_n(\Omega)+n$, and looking at top degrees in the above exact sequence we also conclude that $a_n(\Omega) = a_n(F_0)$. On the other hand, $a_n(F_0) = \beta(N) + a_n(S) = \beta(N) - \sum_{i=1}^n d_i \leq \reg(N) - n$  \cite[Proposition 3.1]{DSMNB}, and so, $\reg(\Omega) \leq \reg(N)$, a contradiction. Thus we always have that $\reg(\Omega)-1 \leq \reg(N)$, and by (\ref{ineq}) the inequality $\Reg(N) \leq \reg(N) + \sum_{i=1}^n (d_i-1)$ is proved.

For the reverse inequality, first observe that the above isomorphisms give that $a_i(N) = a_{i+1}(\Omega)$ for all $i<n-1$, while the exact sequence yields that $a_{n-1}(N) \leq a_n(\Omega)$. Since $n>0$ and $\Omega$ is a submodule of a free module, it has positive depth, and thus $a_0(\Omega) = 0$. It follows that $\max\{a_i(N)+i \mid i=0,\ldots,n-1\} \leq \max\{a_{i+1}(\Omega)+i \mid i=0,\ldots,n-1\} = \reg(\Omega)-1$. By induction we have that $\reg(\Omega) = \Reg(\Omega)+\sum_{i=1}^n(1-d_i)$, and thus $\max\{a_i(N)+i \mid i=0,\ldots,n-1\} \leq \Reg(\Omega)-1+\sum_{i=1}^n(1-d_i) \leq \Reg(N)+\sum_{i=1}^n(1-d_i)$ since the inequality $\Reg(\Omega)-1 \leq \Reg(N)$ always holds. Now, the above exact sequence on local cohomology also gives that $a_n(N) \leq a_n(F_0) = \beta(N) + a_n(S) \leq \Reg(N) - \sum_{i=1}^n d_i$, and thus $a_n(N)+n \leq \Reg(N) + \sum_{i=1}^n (1-d_i)$. In conclusion, we have that $\reg(N) = \sup\{a_i(N) + i \mid i=0,\ldots,n\} \leq \Reg(N) + \sum_{i=1}^n (1-d_i)$, and the proof is complete.
\end{proof}

\section{Purity of $M$-modules and (semi)normal affine monoids}\label{SplitSection}

\begin{definition}\label{splittDef}
Let $U\subseteq V\subseteq \QQ^q$ be $M$-modules. We say that the inclusion   $U\subseteq V $ is {\it pure} if $V\setminus U$ is also an $M$-module.
\end{definition}

\begin{example}
Let $M=\ZZ_{\gs 0}(2,1)+\ZZ_{\gs 0}(1,2)$ be the monoid generated by $\{(2,1), (1,2)\}$. We have that the inclusion $(\frac32, \frac32)+M\subset \frac12 M\subset \QQ^2$ is pure. In Figure \ref{pureInclusion} we represent the elements of $M$ with circles and the ones from $(\frac32, \frac32)+M$ with multiplication signs.  The shaded region is included to illustrate that $(\frac32, \frac32)+M$ is obtained as a translation of $M$.
\end{example}

\begin{figure}[ht]
\begin{center}
\includegraphics[width=0.4\textwidth]{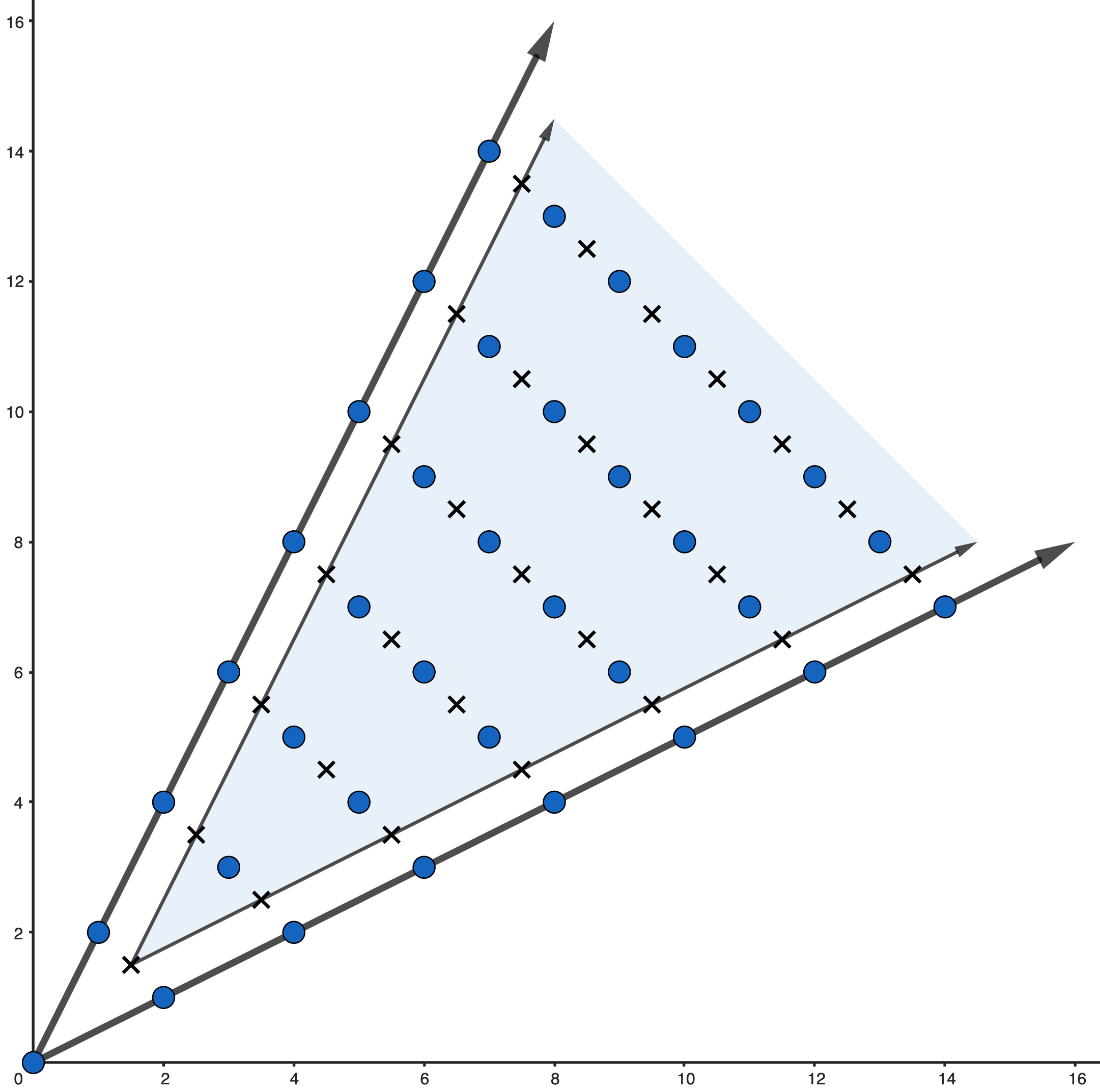}
\end{center}
\caption{For $M=\ZZ_{\gs 0}(2,1)+\ZZ_{\gs 0}(1,2)$, the inclusion $(\frac{3}{2},\frac{3}{2})+M\subset \frac{1}{2}M\subset \QQ^2$ is pure.}
	\label{pureInclusion} 
\end{figure}

In the following proposition, we provide  equivalent statements for Definition \ref{splittDef} in a particular case.

\begin{proposition}\label{reinterpret}
Let  $V\subseteq \QQ^q$ be an $M$-module and $\alpha\in V$. The following statements are equivalent:
\begin{enumerate}
\item[(i)] The inclusion  $(\alpha +M)\subseteq V$ is pure.
\item[(ii)] For every $\gamma \in M$ and $\beta\in V$ we have $\gamma +\beta -\alpha \in M \text{ implies }\beta -\alpha \in M.$
\item[(iii)]  $\left(V-\alpha\right)\cap \ZZ M\subseteq M $
\end{enumerate}
\end{proposition}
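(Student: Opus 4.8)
The plan is to prove the cycle of equivalences by first unwinding the definition of purity directly, and then translating between the two remaining conditions using the group structure of $\ZZ M$. Note at the outset that since $\alpha \in V$ and $V$ is an $M$-module, we have $\alpha + M \subseteq V$, so the inclusion $(\alpha+M)\subseteq V$ and the set difference $V\setminus(\alpha+M)$ are meaningful.

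First I would establish the equivalence (i) $\Leftrightarrow$ (ii) as the contrapositive of Definition \ref{splittDef}. By definition, purity of $(\alpha+M)\subseteq V$ means that $V\setminus(\alpha+M)$ is an $M$-module. Because $V$ is an $M$-module, for any $\beta\in V$ and $\gamma\in M$ we automatically have $\beta+\gamma\in V$; hence ``$V\setminus(\alpha+M)$ is an $M$-module'' says precisely that whenever $\beta-\alpha\notin M$ one also has $(\beta+\gamma)-\alpha\notin M$ for every $\gamma\in M$. Taking the contrapositive over all $\beta\in V$ and $\gamma\in M$ yields exactly statement (ii), so this equivalence is essentially bookkeeping.

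Next I would prove (iii) $\Rightarrow$ (ii), which is immediate: given $\gamma\in M$ and $\beta\in V$ with $\gamma+\beta-\alpha\in M$, both $\gamma$ and $\gamma+\beta-\alpha$ lie in $M\subseteq \ZZ M$, so their difference $\beta-\alpha$ lies in $\ZZ M$; since $\beta-\alpha\in V-\alpha$, hypothesis (iii) forces $\beta-\alpha\in M$. For the reverse implication (ii) $\Rightarrow$ (iii), I would use the elementary fact that every element of the group $\ZZ M$ is a difference of two elements of $M$: writing an element as an integer combination of the generators $\gamma_1,\dots,\gamma_u$ and separating the positive and negative coefficients (using $0\in M$) expresses it as $x-y$ with $x,y\in M$. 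Thus, given $\beta-\alpha\in (V-\alpha)\cap\ZZ M$, write $\beta-\alpha=x-y$ with $x,y\in M$; then $y+(\beta-\alpha)=x\in M$, and applying (ii) with $\gamma=y$ gives $\beta-\alpha\in M$, establishing (iii).

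I do not anticipate a genuine obstacle here: the argument is a routine translation between the set-theoretic formulation of purity, a cancellation-type condition, and a saturation condition. The only step requiring a small observation rather than pure formal manipulation is (ii) $\Rightarrow$ (iii), where the representation of an arbitrary element of $\ZZ M$ as a difference of two elements of $M$ is the key input.
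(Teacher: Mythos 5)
Your proposal is correct and follows essentially the same route as the paper: the same contrapositive unwinding of the purity definition links (i) and (ii), and the same representation of an element of $\ZZ M$ as a difference $x-y$ with $x,y\in M$ drives the passage between (ii) and (iii). The only difference is organizational --- you prove two biconditionals where the paper runs the cycle (i) $\Rightarrow$ (ii) $\Rightarrow$ (iii) $\Rightarrow$ (i) --- and you are slightly more explicit in justifying that every element of $\ZZ M$ is such a difference.
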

\begin{proof}
First,  assume (i)  and let $\gamma \in M$ and $\beta\in V$ with $\gamma+\beta\in \alpha +M$. Thus, $\beta \in \alpha + M$ and then  (ii) follows.

Now, assume (ii)  and  let $\beta\in V$ be such that $\beta-\alpha\in \ZZ M$. Write $\beta-\alpha=\sigma-\gamma$ with $\sigma, \gamma\in M$. Then $\gamma+\beta-\alpha=\sigma\in M$ implies $\beta-\alpha\in M$. Thus, (iii)  follows.

Finally, assume (iii). Let $\beta\in V\setminus (\alpha+M)$ and $\gamma \in M$. Assume by means of contradiction that $\beta+\gamma \in \alpha +M$. Then  $\beta+\gamma=\alpha +\sigma$ for some $\sigma\in M$, which implies  $\beta-\alpha \in \left(V-\alpha\right)\cap \ZZ M\setminus M$ which contradicts (iii). Thus, $\beta+\gamma \in V\setminus (\alpha+M)$ and then (i) follows. 
\end{proof}

We now discuss seminormality and normality, which are the main subjects of study in this manuscript. We refer to the work of Bruns, Li and R{\"o}mer   \cite{BrunsLiRomer} to reader in seminormal rings. 

\begin{definition} 
A monoid $M$ is called {\it seminormal} if, whenever $\alpha \in \ZZ M$ is such that $2 \alpha \in M$ and $3 \alpha \in M$, then $\alpha \in M$. The monoid is called {\it normal} if $C(M) \cap \ZZ M = M$.
\end{definition}

The following alternative characterization of seminormality and normality is useful for the proof of our main results. While it might be already known to experts, we record it here with a proof for convenience of the reader. For a related result in prime characteristic we refer to the work of Bruns, Li, and  R\"{o}mer \cite[Section 6]{BrunsLiRomer}.

\begin{proposition}\label{redefnormal}
Let $M$ be an affine monoid.
\begin{enumerate}
\item  $M$ is seminormal  if and only if there exists $m\in \ZZ_{>1}$ such that $\frac{1}{m}M\cap \ZZ M= M$.
\item  $M$ is normal  if and only if  $\frac{1}{m}M\cap \ZZ M= M$ for every $m\in \ZZ_{>1}$.
\end{enumerate}
\end{proposition}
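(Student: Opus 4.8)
```latex
\textbf{Proof proposal.} The plan is to prove both equivalences by carefully tracking the defining conditions for seminormality and normality against the condition $\frac{1}{m}M \cap \ZZ M = M$. Throughout, the key observation is that $\frac{1}{m}M = \{\frac{1}{m}\mu \mid \mu \in M\}$, so an element $\alpha \in \QQ^q$ lies in $\frac{1}{m}M$ precisely when $m\alpha \in M$. Thus the condition $\frac{1}{m}M \cap \ZZ M = M$ translates to: for all $\alpha \in \ZZ M$, if $m\alpha \in M$ then $\alpha \in M$. This reformulation is what I would use to connect to the integer-multiple conditions appearing in the definitions.

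For part (1), the forward direction is the delicate one and I expect it to be the main obstacle. Assuming $M$ is seminormal, I want to produce a single $m > 1$ such that $m\alpha \in M$ with $\alpha \in \ZZ M$ forces $\alpha \in M$. The seminormality hypothesis only directly controls the pair of exponents $2$ and $3$; to leverage it for an arbitrary $m$, I would first recall (or establish) the standard fact that for a seminormal monoid, $n\alpha \in M$ for some integer $n \geq 1$ together with $\alpha \in \ZZ M$ already implies $\alpha \in M$ --- this is the statement that $M$ equals its seminormalization, and it follows from the $\{2,3\}$ condition by an induction argument, since any integer $n \geq 2$ can be written as a combination of $2$'s and $3$'s in a way that lets one bootstrap from $2\alpha, 3\alpha \in M$. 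Concretely, if $n\alpha \in M$, then suitable multiples give $2\alpha$ and $3\alpha$ in $M$ once one knows a high enough multiple lies in $M$, and the seminormality axiom then yields $\alpha \in M$. With this fact in hand, \emph{any} choice of $m > 1$ works, so in particular such an $m$ exists. For the converse, suppose $\frac{1}{m}M \cap \ZZ M = M$ for some fixed $m > 1$, and let $\alpha \in \ZZ M$ with $2\alpha, 3\alpha \in M$. Since every sufficiently large integer --- in particular some multiple of $m$, say $km$ --- is a non-negative integer combination of $2$ and $3$, we get $km\,\alpha \in M$, hence $m(k\alpha) \in M$ with $k\alpha \in \ZZ M$, so the hypothesis gives $k\alpha \in M$; iterating or refining this argument brings us down to $\alpha \in M$. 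The cleanest route here is to show directly that $m\alpha \in M$: writing $m$ (or a small multiple thereof adjusted by the $\{2,3\}$-representability of integers $\geq 2$) as a sum of $2$'s and $3$'s, $m\alpha$ is a sum of elements of $M$ and hence lies in $M$, and then the hypothesis applied to $\alpha \in \ZZ M$ with $m\alpha \in M$ yields $\alpha \in M$.

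For part (2), the forward direction uses the definition $C(M) \cap \ZZ M = M$: if $\alpha \in \ZZ M$ and $m\alpha \in M$ for some $m > 1$, then $\alpha = \frac{1}{m}(m\alpha) \in \RR_{\gs 0} M = C(M)$, and combined with $\alpha \in \ZZ M$ and normality we immediately get $\alpha \in M$; since $m$ was arbitrary this establishes the equality $\frac{1}{m}M \cap \ZZ M = M$ for every $m > 1$. The converse is the direction requiring more care: assuming $\frac{1}{m}M \cap \ZZ M = M$ for all $m > 1$, I must show $C(M) \cap \ZZ M \subseteq M$ (the reverse containment being automatic). Take $\alpha \in C(M) \cap \ZZ M$; then $\alpha = \sum r_i \gamma_i$ with $r_i \in \RR_{\gs 0}$, and since $\alpha \in \ZZ M \subseteq \QQ^q$ the coordinates are rational, so after clearing denominators I can find an integer $m > 1$ with $m\alpha \in M$ (because $m\alpha$ becomes a non-negative \emph{integer} combination of the $\gamma_i$ once $m$ clears all denominators and the rationality of $\alpha$ forces the scaled coefficients to be non-negative integers in the cone). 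Applying the hypothesis for this particular $m$ gives $\alpha \in M$, as desired.

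The main obstacle I anticipate is the forward direction of part (1): turning the rigid $\{2,3\}$ seminormality axiom into a statement about divisibility by an arbitrary single $m$. The resolution hinges on the elementary but essential number-theoretic fact that every integer $n \geq 2$ is a non-negative combination of $2$ and $3$, which lets one propagate membership $2\alpha, 3\alpha \in M \Rightarrow n\alpha \in M$ for all $n \geq 2$, and conversely extract the $\{2,3\}$-conditions from a single divisibility hypothesis. Once this bridge is built, both equivalences reduce to the clean reformulation $\alpha \in \ZZ M,\ m\alpha \in M \Rightarrow \alpha \in M$, and the rest is bookkeeping with cones and rational coordinates.
```
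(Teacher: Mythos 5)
There is a genuine gap in the forward direction of part (1), and it is fatal to that half of your argument. The ``standard fact'' you invoke --- that for a seminormal monoid, $\alpha \in \ZZ M$ together with $n\alpha \in M$ for a \emph{single} integer $n \geq 1$ already forces $\alpha \in M$ --- is false: that property is precisely normality (integral closedness of $M$ in $\ZZ M$), not seminormality, and if it held for every seminormal monoid then, combined with your part (2), every seminormal monoid would be normal. The paper's own Example \ref{goodExample} refutes it: $M=\ZZ_{\gs 0}(2,0)+\ZZ_{\gs 0}(1,1)+\ZZ_{\gs 0}(0,1)=\ZZ_{\gs 0}^2\setminus\{(2a+1,0)\}$ is seminormal, yet $\alpha=(1,0)\in\ZZ M$ satisfies $2\alpha\in M$ and $\alpha\notin M$, so $\frac{1}{2}M\cap\ZZ M\neq M$ and ``any choice of $m>1$ works'' is wrong. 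The bootstrap you sketch also cannot be repaired: from $n\alpha\in M$ you only obtain the multiples $jn\alpha\in M$ for $j\gs 1$, which never delivers $2\alpha$ and $3\alpha$ simultaneously, so the $\{2,3\}$ axiom has nothing to bite on. The real content of this direction is the careful \emph{choice} of $m$: the paper takes a prime $p\gg 0$ such that $(p)$ is not an associated prime of any of the finitely many groups $G_F=(\ZZ M\cap\QQ F)/\ZZ(M\cap F)$ as $F$ ranges over the faces of $C(M)$, and then, for $\alpha\in\frac{1}{p}M\cap\ZZ M$ lying in $\inte(F')$ with $p\alpha\in M\cap F'$, deduces $\alpha\in\ZZ(M\cap F')\cap\inte(F')$ and concludes $\alpha\in M$ from the Bruns--Li--R\"omer face-wise characterization of seminormality. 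Some such nontrivial input is unavoidable here.

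The remaining three implications in your proposal are essentially the paper's arguments and are correct: the backward direction of (1) via writing $m=2a+3b$ so that $m\alpha=a(2\alpha)+b(3\alpha)\in M$ (your ``cleanest route''; the preceding musings about $km$ and iterating are unnecessary and should be cut), and both directions of (2). One small point in the converse of (2): an element of $C(M)\cap\ZZ M$ need not have rational coefficients with respect to an arbitrary real-cone representation, so you should note that a rational point of a rational cone admits a representation with nonnegative \emph{rational} coefficients before clearing denominators; this is the same implicit step the paper takes.
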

\begin{proof}
We note that the containment  $\frac{1}{m}M\cap \ZZ M\supseteq M$  holds trivially for any $m\in \ZZ_{>1}$.

For (1), assume that $\frac{1}{m}M \cap \ZZ M \subseteq M$ for some $m \in \ZZ_{>1}$. Let $\alpha \in \ZZ M$ be such that $2\alpha \in M$ and $3 \alpha \in M$. We can find nonnegative integers $a$ and $b$ such that $m=2a+3b$, and thus $m\alpha = a(2\alpha)+b(3\alpha) \in M$. It follows by our assumption that $\alpha \in M$, and thus $M$ is seminormal. Conversely, let $\mathcal{F}$ be the set of all faces of $C(M)$  (of any dimension); we note that $\mathcal{F}$ is a finite set. For any $F \in \mathcal{F}$ we consider the finitely generated Abelian group $G_F = (\ZZ M \cap \QQ  F)/(\ZZ(M \cap F))$. Let $p \gg 0$ be a prime number such that the ideal $(p)$  is not associated to $G_F$ as a $\ZZ$-module for any $F \in \mathcal{F}$. We claim that $\frac{1}{p}M \cap \ZZ M \subseteq M$. Let $\alpha \in \frac{1}{p}M \cap \ZZ M\subseteq C(M)\cap \ZZ M$, then $\alpha\in \inte(F')$ the interior of some  $F'\in \mathcal{F}$, and also  $p\alpha\in M\cap F'$. By the choice of $p$ it follows that $\alpha\in \ZZ(M\cap F')\cap \inte(F')$ and then $\alpha\in M$ \cite[Theorem 2.1]{BrunsLiRomer}.

For (2),  if $M$ is normal then  $\frac{1}{m}M \cap \ZZ M\subseteq C(M)\cap \ZZ M=M$. Conversely, let $\alpha \in C(M) \cap \ZZ M$, then $\alpha = \sum_{i=1}^r \frac{t_i}{m_i} \alpha_i$, with $\alpha_i \in M$ and $\frac{t_i}{m_i} \in \QQ_{\gs 0}$. If we let $m=\prod_{i=1}^r m_i$, it then follows that $\alpha \in \frac{1}{m}M \cap \ZZ M = M$, as desired.
\end{proof}


 Motivated by the previous result, we consider the following definition.

\begin{definition}
 We set $\cA(M)=\{m\in \ZZ_{>1}\; | \; \frac{1}{m}M\cap \ZZ M= M\}$.
\end{definition}

\begin{remark}\label{allmNorm}
As a consequence of Proposition \ref{redefnormal}, we deduce that
\begin{enumerate}
\item $\cA(M)\neq \emptyset $ if and only if $M$ is seminormal;
\item $M$ is normal if and only if $\cA(M)=\ZZ_{>1}$.
\end{enumerate}
\end{remark}

We now see that $\cA(M)$ is a multiplicative set.

\begin{lemma}\label{mnboth}
Let $m,n\in \ZZ_{>1}$. Then $mn \in \cA(M)$ if and only if both $m \in \cA(M)$ and $n \in \cA(M)$.
\end{lemma}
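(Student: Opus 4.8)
The plan is to first translate membership in $\cA(M)$ into a purely element-wise ``saturation'' condition, and then verify each implication directly. Since the inclusion $M\subseteq \frac{1}{k}M\cap \ZZ M$ holds automatically for every $k\in\ZZ_{>1}$ (as was already noted at the start of the proof of Proposition \ref{redefnormal}), the condition $k\in\cA(M)$ is equivalent to the reverse inclusion, i.e.\ to the statement that for every $\alpha\in\ZZ M$ with $k\alpha\in M$ one has $\alpha\in M$. Phrasing everything in these terms is the key first step, and the only two facts about $M$ we will use are that $\ZZ M$ is a group and that $M$, being a monoid, is closed under addition and hence under multiplication by positive integers.

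For the direction $mn\in\cA(M)\Rightarrow m\in\cA(M)$ and $n\in\cA(M)$, I would argue as follows. To show $m\in\cA(M)$, take $\alpha\in\ZZ M$ with $m\alpha\in M$; since $M$ is closed under multiplication by the positive integer $n$, we get $mn\alpha=n(m\alpha)\in M$, so the reformulated hypothesis $mn\in\cA(M)$ forces $\alpha\in M$. The argument for $n\in\cA(M)$ is identical with the roles of $m$ and $n$ exchanged, using $mn\alpha=m(n\alpha)\in M$. This direction is essentially immediate.

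For the converse, $m\in\cA(M)$ and $n\in\cA(M)\Rightarrow mn\in\cA(M)$, I would ``peel off'' the two factors one at a time. Given $\alpha\in\ZZ M$ with $mn\alpha\in M$, first consider $n\alpha$: it lies in $\ZZ M$ because $\ZZ M$ is a group, and $m(n\alpha)=mn\alpha\in M$, so $m\in\cA(M)$ gives $n\alpha\in M$. Now $\alpha\in\ZZ M$ and $n\alpha\in M$, so $n\in\cA(M)$ yields $\alpha\in M$, proving $mn\in\cA(M)$. The only point requiring a little care, and the closest thing to an obstacle, is keeping track of the intermediate membership $n\alpha\in\ZZ M$ (which is where the group structure of $\ZZ M$, rather than merely the monoid structure of $M$, is used); beyond that bookkeeping the proof is a routine chain of implications.
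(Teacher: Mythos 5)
Your proof is correct and follows essentially the same route as the paper: the reverse implication is verbatim the paper's argument (peel off $m$ first, then $n$), and your element-wise treatment of the forward direction is just an unpacking of the paper's one-line containment $\frac{1}{m}M\cap \ZZ M\subseteq \frac{1}{mn}M\cap \ZZ M=M$.
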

\begin{proof}
First assume that $mn \in \cA(M)$. Then $m \in \cA(M)$ because $\frac{1}{m}  M\cap \ZZ M \subseteq \frac{1}{mn}M \cap \ZZ M = M$. Likewise $n \in \cA(M)$. For the converse, assume that $\alpha \in \ZZ M$ is such that $mn\alpha \in M$. Note that $\beta = n\alpha \in \ZZ M$ is such that $m \beta \in M$, and thus $\beta \in M$ because $m \in \cA(M)$. But then $\alpha \in \ZZ M$ is such that $n\alpha \in M$, and thus $\alpha \in M$ because $n \in \cA(M)$. It follows that $mn \in \cA(M)$.
\end{proof}

Now, we consider a set that records the pure translations of $M$ in $\frac{1}{m} M$. 
In Section \ref{SectionRings} we see that this set corresponds to the free summands of $\kk\left[\frac{1}{m}M\right]$ as a $\kk[M]$-module.

\begin{definition}\label{defObj}
Let $m\in\ZZ_{> 1}$. 
We set 
$$V_m(M)=\left\{\alpha\in\frac{1}{m}M \;\bigg|\;  \left(\frac{1}{m}M-\alpha\right)\cap \ZZ M\subseteq M \right\}.$$
Moreover, we set 
$$V(M)=\bigcup_{m \in \ZZ_{> 1}}V_m(M).$$
\end{definition}

\begin{remark}\label{remDefVm}
We note that, because of Proposition \ref{reinterpret}, $V_m(M)$ is precisely the set of $\alpha\in \frac{1}{m}M$ such that $(\alpha + M)\subseteq \frac{1}{m}M$ is pure. 
\end{remark}

\begin{remark}\label{Vmnotempty}
We note that $V_m(M)\neq \emptyset$ if and only if  $m\in\cA(M)$. As a consequence  we have  $$V(M)=\bigcup_{m \in  \cA(M)}V_m(M).$$ Indeed, if $m\in \cA(M)$ then $0\in V_m(M)$. Conversely,  fix $\alpha\in V_m(M)$. From  the containments $\frac{1}{m}M\cap \ZZ M= \left(\left(\frac{1}{m}M+\alpha\right)-\alpha\right)\cap \ZZ M \subseteq\left(\frac{1}{m}M-\alpha\right)\cap \ZZ M \subseteq M$, it follows that $m\in \cA(M)$.

\end{remark}

\begin{remark}\label{mnotvm} 
If $M$ is seminormal, then  $M\cap V(M)=\{0\}$. Indeed, if $0\neq \gamma\in M$, then $-\gamma\in \left(\left(\frac{1}{m}M-\gamma\right)\cap \ZZ M\right) \setminus M$ for every $m\in \cA(M)$.
\end{remark}

In the following remarks we observe that  $V_m(M)$ is compatible with projections onto faces of $C(M)$ and with isomorphisms of monoids. 

\begin{remark}\label{VmProjFace}
For every face $F$  of $C(M)$ and $m\in \ZZ_{\gs 1}$ we have 
$V_m(M)\cap F\subseteq V_m(M\cap F)$. Indeed, let $\alpha\in V_m(M)\cap F$, then 
$$ \left(\frac{1}{m}(M\cap F)-\alpha\right)\cap \ZZ (M\cap F)\subseteq \left(\left(\frac{1}{m}M-\alpha\right)\cap \ZZ M\right)\cap F=M\cap F.$$
\end{remark}

\begin{remark}\label{VmIsom}
Since every isomorphism of monoids $\varphi: M\to M'$ extends to an isomorphism of groups $\varphi: \ZZ M\to \ZZ M'$, it follows that $\cA(M')=\cA(M)$. Furthermore,  for every $m\in \cA(M)$ we have $mV_m(M')=\varphi(mV_m(M))$.
\end{remark}

We now describe basic properties $V_m(M)$. In particular, we see that this set is finite.

\begin{proposition}\label{minusmbelong}
Let $m\in \ZZ_{> 1}$ and  $\alpha,\beta, \eta\in \frac{1}{m}M$ be such that $\alpha=\beta+\eta$. If $\alpha\in V_m(M)$, then $\beta, \eta\in V_m(M)$.
\end{proposition}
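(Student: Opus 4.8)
The plan is to argue directly from the membership criterion defining $V_m(M)$: an element $\alpha \in \frac{1}{m}M$ lies in $V_m(M)$ precisely when every point of $\frac{1}{m}M$ that differs from $\alpha$ by something in $\ZZ M$ in fact differs from $\alpha$ by something in $M$. Since the hypothesis is symmetric in $\beta$ and $\eta$ (we have $\alpha = \beta + \eta = \eta + \beta$), it suffices to prove $\beta \in V_m(M)$, and then the identical argument with $\beta$ and $\eta$ interchanged gives $\eta \in V_m(M)$.

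First I would record that $\beta \in \frac{1}{m}M$ is part of the hypothesis, so the only thing left to verify is the containment $\left(\frac{1}{m}M - \beta\right) \cap \ZZ M \subseteq M$. To this end I fix an arbitrary $y \in \frac{1}{m}M$ with $y - \beta \in \ZZ M$, and the goal becomes showing $y - \beta \in M$.

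The key step, and really the only idea in the proof, is to feed the shifted element $y + \eta$ into the purity of $\alpha$. Since $\frac{1}{m}M$ is closed under addition and $\eta \in \frac{1}{m}M$, we have $y + \eta \in \frac{1}{m}M$. Using $\alpha = \beta + \eta$ one computes $(y + \eta) - \alpha = y - \beta$, which lies in $\ZZ M$ by the choice of $y$. Applying the defining property of $\alpha \in V_m(M)$ to the element $y + \eta$ then yields $(y + \eta) - \alpha = y - \beta \in M$, exactly as required. This establishes $\beta \in V_m(M)$.

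I expect no real obstacle here beyond spotting the translation $y \mapsto y + \eta$: the facts that $y + \eta$ remains in $\frac{1}{m}M$ and that its difference with $\alpha$ collapses to $y - \beta$ are both immediate from the monoid structure of $\frac{1}{m}M$ and the relation $\alpha = \beta + \eta$. One could alternatively phrase the whole argument through the equivalent formulation in Proposition \ref{reinterpret}(iii), but the computation is cleanest when carried out straight from the definition of $V_m(M)$.
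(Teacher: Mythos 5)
Your proof is correct and is essentially identical to the paper's: both fix an arbitrary $y\in\frac{1}{m}M$ with $y-\beta\in\ZZ M$, rewrite $y-\beta=(y+\eta)-\alpha$, and apply the defining containment for $\alpha\in V_m(M)$, concluding $\eta\in V_m(M)$ by the symmetric argument. No issues.
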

\begin{proof}
 Let $w\in \frac{1}{m}M$ be such that $w-\beta\in \ZZ M$. Thus, from 
$$w-\beta=(w+\eta)-\alpha  \in \left(\frac{1}{m}M-\alpha\right)\cap \ZZ M\subseteq M$$
it follows that $w-\beta\in M$. This argument implies that   $\beta \in V_m(M)$. Likewise, $\eta \in V_m(M)$,  finishing the proof.
\end{proof}

\begin{corollary}\label{compIdeal}
For every $m\in \ZZ_{> 1}$ the set $M\setminus mV_m(M)$ is an ideal of $M$.
\end{corollary}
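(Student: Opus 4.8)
The plan is to check directly that $M \setminus mV_m(M)$ meets the definition of an ideal of $M$, namely that it is contained in $M$ and is closed under addition by arbitrary elements of $M$. Containment in $M$ is automatic: by definition $V_m(M) \subseteq \frac{1}{m}M$, so $mV_m(M) \subseteq M$ and hence $M \setminus mV_m(M) \subseteq M$. The content of the statement is therefore the $M$-module condition $(M \setminus mV_m(M)) + M \subseteq M \setminus mV_m(M)$.

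I would prove this condition in its contrapositive form: if $a \in M$, $\gamma \in M$, and $a + \gamma \in mV_m(M)$, then already $a \in mV_m(M)$. The key observation is that Proposition \ref{minusmbelong} says exactly that $V_m(M)$ is closed under passing from a sum in $\frac{1}{m}M$ to each of its summands. Concretely, set $\alpha = \frac{a+\gamma}{m}$; the hypothesis $a + \gamma \in mV_m(M)$ means precisely that $\alpha \in V_m(M)$. Writing $\alpha = \frac{a}{m} + \frac{\gamma}{m}$, with both $\frac{a}{m}$ and $\frac{\gamma}{m}$ lying in $\frac{1}{m}M$ because $a, \gamma \in M$, Proposition \ref{minusmbelong} applied to this decomposition yields $\frac{a}{m} \in V_m(M)$, i.e.\ $a \in mV_m(M)$, as desired.

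Taking contrapositives, this shows that $a \notin mV_m(M)$ together with $\gamma \in M$ forces $a + \gamma \notin mV_m(M)$; since $a + \gamma \in M$ trivially, we conclude that $(M \setminus mV_m(M)) + M \subseteq M \setminus mV_m(M)$, which completes the verification.

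I do not anticipate a genuine obstacle here, as the statement is essentially a direct corollary of Proposition \ref{minusmbelong}. The only points requiring care are the bookkeeping of the scaling by $m$, that is, translating between $V_m(M) \subseteq \frac{1}{m}M$ and $mV_m(M) \subseteq M$, and remembering that the empty-set and full-set edge cases are both legitimate ideals under the definition given, so that no separate argument is needed for them.
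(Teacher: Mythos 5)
Your proof is correct and follows essentially the same route as the paper: both apply Proposition \ref{minusmbelong} to the decomposition $\frac{a+\gamma}{m}=\frac{a}{m}+\frac{\gamma}{m}$ to conclude $\frac{a}{m}\in V_m(M)$, the paper phrasing it as a contradiction and you as a contrapositive. No issues.
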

\begin{proof}
Let  $a\in M\setminus mV_m(M)$ and $g\in M$. Suppose $a+g\in mV_m(M)$, then $\frac{a+g}{m}\in V_m(M)$. By Proposition \ref{minusmbelong} this implies $\frac{a}{m}\in V_m(M)$ which is a contradiction. 
\end{proof}

The following lemma provides  useful facts about the sets $V_m(M)$. We recall that the {\it Minkowski} sum of two subsets $A,B\subseteq \RR^q$ is defined as $A\oplus B=\{a+b\mid a\in A,b\in B\}$.

\begin{lemma}\label{vminvmn}
Let $m,n\in\cA(M)$.
Then,
\begin{enumerate}
\item $\frac{1}{m}M\cap V_{mn}(M)=V_m(M)$.
\item $ \frac{1}{n}V_m(M)\oplus V_n(M)\subseteq  V_{mn}(M)$;
\item $|\frac{1}{n}V_m(M)\oplus V_n(M)|=|V_m(M)|\cdot |V_n(M)|\ls |V_{mn}(M)|$.
\end{enumerate}

\end{lemma}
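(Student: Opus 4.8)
The plan is to prove the three parts of Lemma~\ref{vminvmn} in order, using Proposition~\ref{minusmbelong} and the multiplicative structure of $\cA(M)$ from Lemma~\ref{mnboth} as the main tools. Note first that since $m,n\in\cA(M)$, Lemma~\ref{mnboth} gives $mn\in\cA(M)$, so all the sets $V_m(M)$, $V_n(M)$, $V_{mn}(M)$ are nonempty and the statements are not vacuous. Throughout I will repeatedly use the characterization that $\alpha\in V_k(M)$ means $\alpha\in\frac1k M$ and $(\frac1k M-\alpha)\cap\ZZ M\subseteq M$.

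For part (1), I would prove both inclusions. The inclusion $V_m(M)\subseteq \frac1m M\cap V_{mn}(M)$ requires showing that if $\alpha\in V_m(M)$ then $\alpha\in V_{mn}(M)$; here I would take $w\in\frac{1}{mn}M$ with $w-\alpha\in\ZZ M$, and the difficulty is that $w$ lives in the larger module $\frac{1}{mn}M$, not in $\frac1m M$. The idea is that $w-\alpha\in\ZZ M$ together with $\alpha\in\frac1m M\subseteq\ZZ M$ forces $w\in\ZZ M$, hence $w\in\frac{1}{mn}M\cap\ZZ M=M$ since $mn\in\cA(M)$; then $w-\alpha\in(\frac1m M-\alpha)\cap\ZZ M\subseteq M$. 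For the reverse inclusion $\frac1m M\cap V_{mn}(M)\subseteq V_m(M)$, take $\alpha\in\frac1m M\cap V_{mn}(M)$ and $w\in\frac1m M$ with $w-\alpha\in\ZZ M$; since $\frac1m M\subseteq\frac{1}{mn}M$, we get $w-\alpha\in(\frac{1}{mn}M-\alpha)\cap\ZZ M\subseteq M$, so $\alpha\in V_m(M)$.

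For part (2), given $\alpha\in V_m(M)$ and $\eta\in V_n(M)$, I would show $\frac1n\alpha+\eta\in V_{mn}(M)$. First $\frac1n\alpha\in\frac{1}{mn}M$ and $\eta\in\frac1n M\subseteq\frac{1}{mn}M$, so the sum lies in $\frac{1}{mn}M$. The purity condition is the substantive part: take $w\in\frac{1}{mn}M$ with $w-(\frac1n\alpha+\eta)\in\ZZ M$; I would first use that $\eta\in V_n(M)$ to peel off $\eta$, then rescale by $n$ and use $\alpha\in V_m(M)$. The natural route is to write $n(w-\eta)-\alpha\in\ZZ M$ after clearing, apply the $V_m$ condition to land $w-\eta$ appropriately in $\frac1n M$, and then apply the $V_n$ condition; managing the scaling factors so that the intermediate elements genuinely lie in the required fractional modules is where care is needed.

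For part (3), the inequality $|V_m(M)|\cdot|V_n(M)|\leq |V_{mn}(M)|$ is immediate from part (2) once I show the Minkowski sum map $(\alpha,\eta)\mapsto\frac1n\alpha+\eta$ is injective, i.e.\ $|\frac1n V_m(M)\oplus V_n(M)|=|V_m(M)|\cdot|V_n(M)|$. Injectivity should follow from a degree or denominator argument: elements of $\frac1n V_m(M)$ have denominators divisible by $mn$ in a controlled way while elements of $V_n(M)$ have denominators dividing $n$, so the two summands can be recovered from the sum; concretely, reducing modulo $\frac1n\ZZ M$ (or comparing $n$-torsion of coordinates) separates $\eta$ from $\frac1n\alpha$. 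I expect the main obstacle to be this injectivity claim in part (3) together with the scaling bookkeeping in part (2), since both require verifying that the fractional lattice decomposition $\frac{1}{mn}M\supseteq\frac1n M+\frac1n(\frac1m M)$ is essentially a direct-sum-type decomposition at the level of the sets $V_m(M),V_n(M)$; finiteness of these sets (established earlier) guarantees the cardinalities are well defined but does not by itself give injectivity, which must be argued from the denominator structure.
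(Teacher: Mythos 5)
There is a genuine gap in part (1), and the key steps of parts (2) and (3) are asserted rather than proved. In part (1), your forward inclusion rests on the claim that $\alpha\in\frac{1}{m}M\subseteq\ZZ M$, which is false: $\frac{1}{m}M$ is in general not contained in $\ZZ M$ (in the paper's own Figure~1 example, $(\frac32,\frac32)\in V_2(M)\subseteq\frac12 M$ but $(\frac32,\frac32)\notin\ZZ^2\supseteq\ZZ M$). Hence you cannot deduce $w\in\ZZ M$ from $w-\alpha\in\ZZ M$, and the whole chain collapses. The correct route, which is the one the paper takes, is to clear the denominator $m$: from $w\in\frac{1}{mn}M$ and $w-\alpha\in\ZZ M$ one gets $mw=m(w-\alpha)+m\alpha\in\ZZ M$ and $mw\in\frac{1}{n}M$, hence $mw\in\frac{1}{n}M\cap\ZZ M=M$ because $n\in\cA(M)$ (note that it is $n\in\cA(M)$, not $mn\in\cA(M)$, that does the work); then $w\in\frac{1}{m}M$ and $w-\alpha\in\left(\frac{1}{m}M-\alpha\right)\cap\ZZ M\subseteq M$ finishes the argument. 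Your reverse inclusion in (1) is fine.

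In part (2) the order of the two purity applications is the entire content, and your sketch wavers on it: one must first apply the $V_m$-condition and only then the $V_n$-condition, because $w$ lies only in $\frac{1}{mn}M$ and cannot be fed to the $V_n$-condition until the $\alpha$-part has been removed. Moreover the object to which the $V_m$-condition applies should be $nw-\alpha$ (which does lie in $\left(\frac{1}{m}M-\alpha\right)\cap\ZZ M$ since $nw\in\frac{1}{m}M$), not $n(w-\eta)-\alpha$ as you write, since $n(w-\eta)=nw-n\eta$ need not lie in $\frac{1}{m}M$. In part (3), the ``denominator argument'' does not give injectivity: reducing $\frac{1}{n}\alpha_1+\beta_1=\frac{1}{n}\alpha_2+\beta_2$ modulo $\frac{1}{n}\ZZ M$ only yields $\alpha_1-\alpha_2\in\ZZ M$, and nothing about denominators prevents two distinct elements of $V_m(M)$ from being congruent modulo $\ZZ M$. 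The missing ingredient is purity: since $\alpha_1,\alpha_2\in V_m(M)\subseteq\frac{1}{m}M$ and $\alpha_1-\alpha_2\in\ZZ M$, Definition~\ref{defObj} gives $\alpha_1-\alpha_2\in M$ and, by symmetry, $\alpha_2-\alpha_1\in M$, so $\alpha_1=\alpha_2$ because $M\subseteq\ZZ^q_{\gs 0}$ is pointed. With that step supplied your route to (3) is a legitimate, and arguably shorter, alternative to the paper's, which instead shows that the translates $\alpha+M$, $\alpha\in V_m(M)$, are pairwise disjoint and reads off the cardinality from the disjointness of the double union $\bigcup_{\alpha,\beta}\left(\frac{\alpha}{n}+\beta+M\right)$.
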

\begin{proof}

We begin with the containment $\supseteq$ in (1).  Let $\alpha\in V_m(M)$, then $\alpha \in \frac{1}{m}M\subseteq \frac{1}{mn}M$. Consider $y\in\left(\frac{1}{mn}M-\alpha\right)\cap \ZZ M $, then $my+m\alpha\in \frac{1}{n}M \cap \ZZ M=M$,
Thus,  $y\in \left(\frac{1}{m}M-\alpha\right)\cap \ZZ M\subseteq M$ and the conclusion follows. Now we prove the containment $\subseteq$. Let $\alpha\in \frac{1}{m}M\cap V_{mn}(M)$ and let $\beta\in \frac{1}{m}M$ be such that $\beta-\alpha\in \ZZ M$. Then $\beta-\alpha\in \left(\frac{1}{m}M-\alpha\right)\cap \ZZ M\subseteq \left(\frac{1}{mn}M-\alpha\right)\cap \ZZ M\subseteq M$.


We continue with (2). Let $\alpha\in V_m(M)$ and  $\beta\in V_n(M)$. Clearly we have $\frac{1}{n}\alpha+\beta\in\frac{1}{mn}M$. Let $\gamma\in M$ and $\eta\in \frac{1}{mn}M$ be such that $\gamma+\eta-(\frac{1}{n}\alpha+\beta)\in M$, then  $n\gamma+n\eta-\alpha\in M$. Since $n\eta \in \frac{1}{m}M$, Proposition \ref{reinterpret} applied to $\alpha$ implies $n\eta-\alpha\in M$, i.e, $\eta-\frac{1}{n}\alpha\in \frac{1}{n}M$. Thus, Proposition \ref{reinterpret} applied to  $\beta$ implies $\eta-(\frac{1}{n}\alpha+\beta)=(\eta-\frac{1}{n}\alpha)-\beta\in M$. Therefore, by Proposition \ref{reinterpret} we have $\frac{1}{n}\alpha+\beta\in V_{mn}(M)$.

We now show (3). Let $\alpha,\beta\in V_m(M)$. We first show that $(\alpha+M)\cap (\beta + M)=\emptyset$ for $\alpha\neq \beta$, we proceed by contradiction. Set $W=(\alpha+M)\cap (\beta + M)$, and  note that $W\subseteq (\alpha+M)$ splits since
$$(\alpha+M)\setminus W=
(\alpha+M)\cap \left(\frac{1}{m} M\setminus (\beta+M)\right)
$$
is an $M$-module (see Remark \ref{remDefVm}). Let $T=(\alpha+M)\setminus W$. If $\gamma\in T$ and $z\in W-\alpha\subseteq M$, then $\gamma+z\in T$ but also  $\gamma+z\in \gamma +(W-\alpha)\subseteq W+M=W$, which is not possible.
We conclude $T=\emptyset$, and so $W= (\alpha+M)$. Thus, $\alpha+M\subseteq \beta +M$. By symmetry, $\beta+M\subseteq \alpha +M$, an then $\beta+M= \alpha +M$. It follows that $\alpha-\beta\in M\cap V_m(M)=\{0\}$ by Proposition \ref{minusmbelong} and Remark \ref{mnotvm}, which is a contradiction. Therefore,  the union 
$\bigcup_{\alpha\in V_m(M)} (\alpha + M)\subseteq \frac{1}{m}M$ is disjoint, and so,
$\bigcup_{\alpha\in V_m(M)} \left(\frac{\alpha}{n} + \frac{1}{n}M\right)\subset\frac{1}{mn}M$ is also disjoint.
By applying the same argument, the union
$$
\bigcup_{\alpha\in V_m(M)} \bigcup_{\beta\in V_n(M)} \left(\frac{\alpha}{n} + \beta +M\right) 
\subset\frac{1}{mn}M$$
is disjoint. Hence, $$\frac{1}{n}V_m(M)\oplus V_n(M)= \left\{ \frac{\alpha}{n} + \beta\mid {\alpha\in V_m(M)},{\beta\in V_n(M)} \right\} \subseteq \frac{1}{mn}M$$ is a set of of cardinality 
$|V_m(M)|\cdot |V_n(M)|.$ Finally, the inequality follows from Part (2).
\end{proof}

\begin{proposition}\label{vmfinite}
Let $m\in \mathcal{A}(M)$ and $\alpha \in \frac{1}{m}M$. Write $\alpha = \frac{c_1}{m}\gamma_1+\cdots +\frac{c_u}{m}\gamma_u$ with $c_1,\ldots, c_u\in \NN$.
If $c_1+\ldots+ c_u\gs (m-1)u+1$, then $\alpha\not \in V_m(M)$. 
Furthermore, $|V_m (M)|\leq m^{\rk(M)}$.
\end{proposition}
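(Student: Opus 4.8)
The plan is to treat the two assertions separately, since they rely on different mechanisms. For the first, I would argue by contraposition and exploit the factorization principle of Proposition \ref{minusmbelong}. If $c_1+\cdots+c_u\geq (m-1)u+1$, then a pigeonhole count forces some $c_j\geq m$, because $u$ summands each bounded by $m-1$ total at most $(m-1)u$. I would then write $\alpha=\gamma_j+\beta$ with
\[
\beta=\frac{c_j-m}{m}\gamma_j+\sum_{i\neq j}\frac{c_i}{m}\gamma_i\in\frac{1}{m}M,\qquad \gamma_j\in M\subseteq\frac{1}{m}M,
\]
and suppose for contradiction that $\alpha\in V_m(M)$. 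Then Proposition \ref{minusmbelong} gives $\gamma_j\in V_m(M)\subseteq V(M)$. Since $m\in\mathcal{A}(M)$ forces $M$ to be seminormal by Remark \ref{allmNorm}, Remark \ref{mnotvm} yields $M\cap V(M)=\{0\}$; as $\gamma_j$ is a nonzero minimal generator, this is the required contradiction, so $\alpha\notin V_m(M)$.

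For the cardinality bound I would \emph{avoid} counting representations directly: the crude count over tuples $(c_1,\ldots,c_u)$ with $0\leq c_i\leq m-1$ only gives $m^u$, and since a fixed $\alpha$ generally admits several such representations while $u$ typically exceeds $\rk(M)$, this overshoots the target $m^{\rk(M)}$. Instead, the key idea is to realize $V_m(M)$ inside the finite group $\frac{1}{m}\ZZ M/\ZZ M$. Because $\ZZ M$ is a free abelian group of rank $\rk(M)$, multiplication by $m$ identifies $\frac{1}{m}\ZZ M$ with $\ZZ M$ and exhibits $\ZZ M$ as a subgroup of index $m^{\rk(M)}$, so $\frac{1}{m}\ZZ M/\ZZ M\cong(\ZZ/m\ZZ)^{\rk(M)}$ has exactly $m^{\rk(M)}$ elements. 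I would then prove that the composite $V_m(M)\hookrightarrow\frac{1}{m}\ZZ M\onto\frac{1}{m}\ZZ M/\ZZ M$ is injective, which immediately gives $|V_m(M)|\leq m^{\rk(M)}$.

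The injectivity is the crux, and the step I expect to require the most care. Given $\alpha,\beta\in V_m(M)$ with $\alpha-\beta\in\ZZ M$, I would invoke the purity characterization of Proposition \ref{reinterpret}(iii): since $\alpha\in\frac{1}{m}M$ and $\alpha-\beta\in\ZZ M$, we get $\alpha-\beta\in\big(\frac{1}{m}M-\beta\big)\cap\ZZ M\subseteq M$, and symmetrically $\beta-\alpha\in M$. As $M\subseteq\ZZ^q_{\geq 0}$, the only element of $M$ whose negative also lies in $M$ is $0$, whence $\alpha=\beta$. This establishes the injection and completes the bound. The main obstacle is thus recognizing that the sharp exponent $\rk(M)$ (rather than $u$) comes not from the first assertion but from passing to the quotient group $\frac{1}{m}\ZZ M/\ZZ M$ and using purity to separate distinct elements of $V_m(M)$.
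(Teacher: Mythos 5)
Your proof of the first assertion is exactly the paper's: pigeonhole to find $c_j\gs m$, peel off $\gamma_j$ via Proposition \ref{minusmbelong}, and contradict Remark \ref{mnotvm}. For the cardinality bound you take a genuinely different, and arguably cleaner, route. The paper invokes the disjointness of the translates $\alpha+M$, $\alpha\in V_m(M)$ (established inside the proof of Lemma \ref{vminvmn}(3) by a purity argument), and then phrases the count as: $\bigoplus_{\alpha\in V_m(M)}\ZZ(\alpha+M)$ is a free direct summand of the $\ZZ M$-module $\ZZ\left(\frac{1}{m}M\right)$, whose rank is $m^{\rk(M)}$. You instead inject $V_m(M)$ into the quotient group $\frac{1}{m}\ZZ M/\ZZ M\cong(\ZZ/m\ZZ)^{\rk(M)}$, proving injectivity directly: if $\alpha-\beta\in\ZZ M$ then Proposition \ref{reinterpret}(iii) applied to $\beta$ gives $\alpha-\beta\in M$ and applied to $\alpha$ gives $\beta-\alpha\in M$, whence $\alpha=\beta$ by positivity of $\ZZ^q_{\gs 0}$. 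The two arguments count the same thing (distinct elements of $V_m(M)$ occupy distinct cosets of $\ZZ M$, of which there are $m^{\rk(M)}$), but yours is self-contained and replaces the paper's somewhat involved splitting argument for disjointness with a two-line antisymmetry argument; the paper's version, on the other hand, records the disjointness of the translates as a reusable fact that it needs again elsewhere (e.g.\ in Theorem \ref{ratioExists}). Your observation that the naive $m^u$ count is insufficient correctly identifies why the quotient-group (or rank) argument is needed. The proof is correct.
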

\begin{proof}
By assumption we have that $\alpha \in \gamma_i + \frac{1}{m}M$ for some $1\ls i\ls u$. 
By way of contradiction suppose that $\alpha\in V_m(M)$.  Since $\alpha-\gamma_i\in \frac{1}{m}M$, it follows by Proposition \ref{minusmbelong}  that  $\gamma_i \in V_m(M)$. However, this contradicts Remark \ref{mnotvm}, and therefore $\alpha\notin V_m(M)$.

For the second claim, recall that 
$\bigcup_{\alpha \in V_m(M)} (\alpha+M)$
is a disjoint union of $M$-modules (see proof of Lemma \ref{vminvmn} (3)), and thus $\bigcup_{\alpha \in V_m(M)} (\alpha+M) \subseteq \frac{1}{m}M$ is pure.
As a consequence, the $\ZZ M$-module $\ZZ\left(\frac{1}{m} M\right)$ contains $\bigoplus_{\alpha \in V_m(M)} \ZZ(\alpha+M)$ as a free direct summand, and thus $|V_m (M)| \leq m^{\rk(M)}$, where the latter is the rank of $\ZZ\left(\frac{1}{m}M\right)$ as a $\ZZ M$-module.
\end{proof}

We now define a new numerical invariant for seminormal monoid. This number plays an important role in our main results. 
This invariant is inspired by the $F$-pure threshold of a ring \cite{TW2004}. This is because the $F$-pure threshold of a standard graded algebra can be described as the supremum among the degrees of a minimal generator of a free summand of $R^{1/p^e}$ \cite{DSNB}. However, the $F$-pure threshold of $R=\kk[M]$ can be different than the  pure threshold of $M$ (see Example \ref{ExVeronese} and Remark \ref{remark mpt VS fpt}).
 In Proposition \ref{mstRat} we prove that for normal monoids this invariant is rational.

\begin{definition}\label{mstDef}
We define the {\it pure threshold} of $M$ as
$$
\mpt(M)=\sup\{|\alpha|\; |\; \alpha\in V(M)\}.
$$
If $V(M)=\emptyset$, i.e., if $M$ is not seminormal, we set $\mpt(M)=-\infty$.
\end{definition}

\begin{remark}\label{sstfinite}
Let $b=\max\{|\gamma_1|,\ldots, |\gamma_u|\}$. By Proposition \ref{vmfinite} we have $|\alpha|< bu$ for every $\alpha\in V(M)$. Therefore, $\mpt(M)<\infty$.
\end{remark}

We now discuss how the pure threshold of a monoid $M$ can be obtained from any increasing sequence in $\cA(M)$.

\begin{proposition}\label{mtinfty}
Let $\{m_t \}_{t\in \ZZ_{\gs 1}}$ be the elements of   $\cA(M)$  ordered increasingly. Then,
$$
\lim\limits_{t\to \infty}\max\{|\alpha| \; |\; \alpha\in V_{m_t}(M)\}=\mpt(M), 
$$
In particular, 
$$
\lim\limits_{t\to \infty}\max\{|\alpha| \; |\; \alpha\in V_{m^t}(M)\}=\mpt(M) 
$$
for any $m\in\cA(M)$.
\end{proposition}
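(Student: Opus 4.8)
\emph{Reductions.} Write $f(m)=\max\{|\alpha| \mid \alpha\in V_m(M)\}$ for $m\in\cA(M)$; this is well defined because $V_m(M)$ is nonempty (Remark \ref{Vmnotempty}) and finite (Proposition \ref{vmfinite}). Since $\cA(M)$ is multiplicatively closed (Lemma \ref{mnboth}), for a fixed $m\in\cA(M)$ all powers $m^t$ lie in $\cA(M)$ and increase strictly, so $\{m^t\}_{t\geq 1}$ is a subsequence of the increasing enumeration $\{m_t\}_{t\geq 1}$ of $\cA(M)$. As a subsequence of a convergent sequence has the same limit, the ``in particular'' statement will follow from the first once that is proved. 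I would therefore reduce to showing $f(m)\to\mpt(M)$ as $m\to\infty$ through $\cA(M)$. The upper bound is immediate: $V_m(M)\subseteq V(M)$ gives $f(m)\leq\mpt(M)$ for all $m$, and $\mpt(M)<\infty$ by Remark \ref{sstfinite}; hence $\limsup_t f(m_t)\leq\mpt(M)$.

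\emph{Lower bound, strategy.} The heart of the matter is $\liminf_t f(m_t)\geq\mpt(M)$, i.e. that for every $\varepsilon>0$ we have $f(m)>\mpt(M)-\varepsilon$ for all sufficiently large $m\in\cA(M)$. By Definition \ref{mstDef} I would first fix $\alpha_0\in V_n(M)$, for some $n\in\cA(M)$, with $|\alpha_0|>\mpt(M)-\varepsilon/2$. The plan is then to approximate the optimal configuration by points of the finer lattices $\frac{1}{m}\ZZ M$ that remain pure and whose degrees converge to a value close to $|\alpha_0|$; producing such a point for every large $m\in\cA(M)$ yields $f(m)>\mpt(M)-\varepsilon$.

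\emph{Lower bound, approximation.} Using the reformulation of Proposition \ref{reinterpret}(iii), membership $\alpha\in V_m(M)$ means $\alpha\in\frac{1}{m}M$ together with $(\frac{1}{m}M-\alpha)\cap\ZZ M\subseteq M$. The structural input I would use is seminormality in the form of \cite[Theorem 2.1]{BrunsLiRomer}: $M$ agrees with $\ZZ M$ on the relative interior of every face of $C(M)$, so the obstructions $\ZZ M\cap C(M)\setminus M$ all lie on the boundary of $C(M)$. In the normal case this makes the purity condition exact and independent of $m$, namely $V_m(M)=\frac{1}{m}M\cap W$ with $W=\{\alpha\in C(M)\mid \alpha+v\notin C(M)\text{ for all }v\in\ZZ M\setminus C(M)\}$, and then $\mpt(M)=\sup\{|\alpha|\mid\alpha\in W\}$. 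Since the mesh of $\frac{1}{m}\ZZ M$ tends to $0$ as $m\to\infty$, I would choose $\alpha^{\ast}$ in the relevant region with $|\alpha^{\ast}|>\mpt(M)-\varepsilon/2$ and, for each large $m\in\cA(M)$, a lattice point $\alpha_m\in\frac{1}{m}\ZZ M$ within $O(1/m)$ of $\alpha^{\ast}$; for $m$ large these lie in $\frac{1}{m}M$, satisfy the purity condition, and have $|\alpha_m|\to|\alpha^{\ast}|$, so $f(m)\geq|\alpha_m|>\mpt(M)-\varepsilon$.

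\emph{Main obstacle.} I expect the crux to be the robustness of purity under this approximation. In the normal case it is clean, since $W$ does not depend on $m$ and only density of the lattices is needed. In the genuinely seminormal case the boundary holes force every pure translation onto a proper face $F$ of $C(M)$ on which $M\cap F$ is normal (cf. Remark \ref{VmProjFace}), so the real work is to localize the supremum defining $\mpt(M)$ to such a face, carry out the density argument there, and lift the approximants back to pure elements of $M$. Finally, I would note that the divisibility monotonicity $V_m(M)\subseteq V_{mn}(M)$, which follows from Lemma \ref{vminvmn}(1) and Lemma \ref{mnboth} and gives $f(m)\leq f(mn)$, is a helpful consistency check but controls $f$ only along divisibility chains; it does not by itself handle a general increasing sequence, which is exactly why the approximation argument is essential.
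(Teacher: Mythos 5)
Your reductions are fine: the upper bound $\max\{|\alpha| : \alpha\in V_{m}(M)\}\leq\mpt(M)$ is immediate from $V_m(M)\subseteq V(M)$, and the ``in particular'' statement does follow by passing to the subsequence $\{m^t\}$. But the lower bound, which is the entire content of the proposition, is not actually proved in your proposal. In the normal case you invoke an identification $V_m(M)=\frac{1}{m}M\cap W$ for a fixed region $W$ independent of $m$; this is true (it is essentially Lemma \ref{BMlemma}(4), via \cite[Lemma 3.11]{von2011f}), but you do not establish it, and in any case it only covers normal monoids. In the genuinely seminormal case you correctly identify the difficulty --- all pure translations live on the pure prime face $\cF_M$ --- but then leave the key step as ``the real work.'' That step is a genuine gap, not a routine verification: Remark \ref{VmProjFace} gives only the inclusion $V_m(M)\cap F\subseteq V_m(M\cap F)$, so producing an element of $V_m(M\cap \cF_M)$ by a density argument on the face does \emph{not} let you ``lift back'' to an element of $V_m(M)$; purity relative to $M\cap\cF_M$ is a strictly weaker condition than purity relative to $M$, and no converse to that remark is available at this point in the paper.

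The paper's proof avoids geometry and density entirely and is worth comparing. Fix $m'\in\cA(M)$ and $\alpha\in V_{m'}(M)$ with $|\alpha|>\mpt(M)-\epsilon/2$. For any $m_t\in\cA(M)$ one has $V_{m'}(M)\subseteq V_{m_tm'}(M)$ by Lemma \ref{vminvmn}(1), so a maximizer $\alpha=\sum_i \frac{c_i}{m_tm'}\gamma_i$ of the degree on $V_{m_tm'}(M)$ still has degree $>\mpt(M)-\epsilon/2$. Writing $c_i\equiv r_i \pmod{m'}$ with $0\leq r_i<m'$ and setting $\alpha'=\sum_i\frac{c_i-r_i}{m_tm'}\gamma_i$, one gets $\alpha'\in\frac{1}{m_t}M$, and $\alpha'\in V_{m_tm'}(M)$ by Proposition \ref{minusmbelong} since $\alpha-\alpha'\in\frac{1}{m_tm'}M$; hence $\alpha'\in\frac{1}{m_t}M\cap V_{m_tm'}(M)=V_{m_t}(M)$ by Lemma \ref{vminvmn}(1) again. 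The degree loss satisfies $|\alpha|-|\alpha'|\leq \frac{u\,b}{m_t}$ with $b=\max_i|\gamma_i|$, which tends to $0$. This coefficient-rounding argument is exactly the mechanism your sketch is missing: it replaces the approximate-and-lift step with an exact descent from $V_{m_tm'}(M)$ to $V_{m_t}(M)$ that works uniformly in the seminormal case. If you want to salvage your approach, you would need to prove a converse to Remark \ref{VmProjFace} (or reprove Lemma \ref{BMlemma}(3) independently), which is substantially more work than the two lemmas the paper actually uses.
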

\begin{proof}
If $\mpt(M)=0$, the result follows. We assume  $\mpt(M)>0$.  Let $b=\max\{|\gamma_1|,\ldots, |\gamma_u|\}$ and for any $n\in \mathcal{A}(M)$ set $a_n=\max\{|\alpha| \; |\; \alpha\in V_{n}(M)\}$. Fix $\epsilon>0$ and  $N\in \NN$ such that $\frac{bu}{m_N}<\frac{\epsilon}{2}$. 
  Let $m'\in \cA(M)$ be such that  $b_{m'}>\mpt(M)-\frac{\epsilon}{2}$ and fix $t\gs N$. By Lemma \ref{vminvmn} we  have $b_{m_tm'}>\mpt(M)-\frac{\epsilon}{2}$. 
Consider $\alpha =\frac{c_1}{m_t m'}\gamma_1 +\cdots +\frac{c_u}{m_t m'}\gamma_u\in V_{m_tm'}(M)$ with $c_i\in \ZZ_{\gs 0}$ and $|\alpha|=b_{m_tm'}$. For each $1\ls i\ls u$ let $0\ls r_i<m'$ be such that $c_i\equiv r_i \pmod{m'}.$ By Proposition \ref{minusmbelong} and Lemma \ref{vminvmn} (1) we have 
$$\alpha':=\frac{c_1-r_1}{m_tm'}\gamma_1 +\cdots +\frac{c_u-r_u}{m_tm'}\gamma_u\in  \frac{1}{m_t}M\cap V_{m_tm'}(M)=V_{m_t}(M).$$
Moreover, 
$$\mpt(M)-|\alpha'|=\mpt(M)-|\alpha|+(|\alpha|-|\alpha'|)<\frac{\epsilon}{2}+\frac{bu}{m_t}<\epsilon.$$
Since $\epsilon$ was chosen arbitrarily, the result follows.
\end{proof}

We now compute some examples of pure thresholds. We note that this invariant depends on the grading given by the embedding $M\subseteq \ZZ^q.$

\begin{example}
Let $M$ be generated by $d_1 e_1,\ldots, d_q e_q\in \ZZ^q$, where $d_i\in\ZZ_{>0}$ and $\{e_1,\ldots,e_q\}$ is the canonical basis in  $\ZZ^q$.
Then, $V_m(M)=\{(d_1 \frac{\alpha_1}{m},\ldots,d_q \frac{\alpha_q}{m})\in  \frac{1}{m}\ZZ^q \; | \;0\leq  \alpha_i\leq m-1\}$, and so, $\mpt(M)=d_1+\ldots+d_q$.
\end{example}

\begin{example}\label{ExVeronese}
Let $q\in \ZZ_{>1}$, $t\in\ZZ_{>0}$ and  $M=\{\alpha\in\ZZ^q_{\gs 0} \; | \; |\alpha|\in t\ZZ_{>0}\}$.
Then $\kk[M]$ is the Veronese subring of order $t$ of a polynomial ring $\kk[x_1,\ldots,x_q]$ with the grading $\deg(x_i) = 1$. We have that
$$
V_m(M)=\left\{\left(\frac{\alpha_1}{m},\ldots,\frac{\alpha_q}{m}\right)\in \frac{1}{m}\ZZ^q \; | \;0\leq  \alpha_i\leq m-1 \hbox{ and }|\alpha|\in t\ZZ_{>0}\right\},
$$ 
and therefore $\mpt(M)=q$. We point out that, if $\kk$ has prime characteristic, then $\fpt(\kk[M]) = \frac{q}{t}$ \cite[Example 6.1]{HWY}. 
\end{example}
\begin{remark} \label{remark mpt VS fpt} It follows from Example \ref{ExVeronese} that $\mpt(M)$ may differ from $\fpt(\kk[M])$ even when $M$ is normal. This is not surprising since $\fpt(\kk[M])$ is independent of the presentation of $\kk[M]$ as a quotient of a polynomial ring, while we have already observed that $\mpt(M)$ heavily depends on the degrees of the generators and on the embedding of $M$.
\end{remark}

The following construction allows us to provide bounds for depths of affine semigroup rings (see Section \ref{SectionRings}). In Proposition \ref{indeedprime} we justify the terminology used in the definition.

\begin{definition}\label{PMdef}
 We define the  {\it pure prime} of $M$  by $$ \cP(M)=M\setminus \displaystyle\bigcup_{m\in \ZZ_{> 1}}mV_m(M).$$
\end{definition}

\begin{proposition}\label{indeedprime} Let $M$ be an affine monoid. Then $\cP(M)$ is a   prime ideal of $M$.
\end{proposition}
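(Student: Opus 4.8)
The plan is to establish the two halves of the assertion separately: first that $\cP(M)$ is an ideal of $M$, and then that it is prime. For the ideal part I would simply reorganize the definition with De Morgan's law. Since each $mV_m(M)$ is contained in $M$, we have
\[
\cP(M) = M \setminus \bigcup_{m \in \ZZ_{>1}} mV_m(M) = \bigcap_{m \in \ZZ_{>1}} \bigl(M \setminus mV_m(M)\bigr).
\]
By Corollary \ref{compIdeal} each set $M \setminus mV_m(M)$ is an ideal of $M$, and an arbitrary intersection of ideals is again an ideal (the defining condition $U + M \subseteq U$ is preserved under intersection); hence $\cP(M)$ is an ideal. Note that the summands with $m \notin \cA(M)$ contribute $M \setminus \emptyset = M$ and are harmless.

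For primeness I would argue by contradiction. Suppose $a,b \in M$ satisfy $a+b \in \cP(M)$ while $a \notin \cP(M)$ and $b \notin \cP(M)$. Unwinding the definition, $a \notin \cP(M)$ means $a \in mV_m(M)$ for some $m \in \ZZ_{>1}$, i.e.\ $\tfrac{a}{m} \in V_m(M)$; as $V_m(M) \neq \emptyset$, Remark \ref{Vmnotempty} gives $m \in \cA(M)$, and likewise $\tfrac{b}{n} \in V_n(M)$ for some $n \in \cA(M)$. The decisive step is to feed these into the Minkowski-sum containment of Lemma \ref{vminvmn}(2): taking $\tfrac{a}{m} \in V_m(M)$ and $\tfrac{b}{n} \in V_n(M)$ yields
\[
\frac{a + mb}{mn} = \frac{1}{n}\cdot\frac{a}{m} + \frac{b}{n} \in \tfrac{1}{n}V_m(M) \oplus V_n(M) \subseteq V_{mn}(M),
\]
so that $a + mb \in mn\,V_{mn}(M)$ and therefore $a + mb \notin \cP(M)$.

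The point I expect to be the crux is that Lemma \ref{vminvmn}(2) does \emph{not} output the monomial sum $a+b$ but the skewed combination $a+mb$; this is unavoidable because $V_m(M)$ is not closed under addition, and the lemma rescales one summand by $\tfrac{1}{n}$. The resolution is to exploit that $\cP(M)$ has already been shown to be an ideal: since $m>1$ we have $(m-1)b \in M$, and from $a+b \in \cP(M)$ the ideal property forces $a + mb = (a+b) + (m-1)b \in \cP(M)$, contradicting $a+mb \notin \cP(M)$. This contradiction proves $a \in \cP(M)$ or $b \in \cP(M)$. I would finally remark that no seminormality hypothesis is needed: if $M$ is not seminormal then $\cA(M)=\emptyset$, every $V_m(M)$ is empty, and $\cP(M)=M$, which is vacuously a prime ideal, so this case is subsumed by the same reasoning.
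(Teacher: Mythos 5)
Your proof is correct and follows essentially the same route as the paper: the ideal part via Corollary \ref{compIdeal} and an intersection, and primeness via Lemma \ref{vminvmn}(2) applied to $\frac{1}{n}\cdot\frac{a}{m}+\frac{b}{n}=\frac{a+mb}{mn}$, with the skewed term $(m-1)b$ as the crux. The only (cosmetic) difference is how that skew is absorbed: the paper peels $\frac{(m-1)b}{mn}$ off inside $V_{mn}(M)$ using Proposition \ref{minusmbelong} to land directly on $\frac{a+b}{mn}\in V_{mn}(M)$, whereas you add $(m-1)b$ onto $a+b$ using the already-established ideal property of $\cP(M)$ --- itself a consequence of that same proposition --- and conclude by contradiction.
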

\begin{proof}

Since $\cP(M)=  \bigcap_{m\in \ZZ_{m> 1}}\left(M\setminus mV_m(M)\right)$, it follows from Corollary \ref{compIdeal} that $\cP(M)$ is an ideal of $M$. Now, let  $a,b \in M\setminus  \cP(M)$ and $m,n\in \cA(M)$ be such that $\alpha:=\frac{a}{m}\in V_m(M)$ and $\beta:=\frac{b}{m}\in V_n(M)$. We claim that $\frac{a+b}{mn}\in V_{mn}(M)$ which implies $a+b\not\in \cP(M)$, finishing the proof. Indeed,  suppose $\frac{a+b}{mn}\not\in V_{mn}(M)$, then from Proposition \ref{minusmbelong}  it follows that 
$$\frac{1}{n}\alpha +\beta =\frac{a}{mn}+\frac{mb}{mn}= \frac{a+b}{mn}+\frac{(m-1)b}{mn} \not\in V_{mn}(M),$$ 
which contradicts Lemma \ref{vminvmn} (2).
\end{proof}

We obtain the following theorem that relates  $\cP(M)$ with the normality of $M$.

\begin{theorem}\label{allVmNormal}
Let $M$ be an affine monoid. Then $M$ is normal if and only $\cP(M)=\emptyset.$
\end{theorem}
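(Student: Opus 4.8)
The plan is to prove the two implications separately, working throughout with the normalization $\ov M := C(M)\cap \ZZ M$, which is a finitely generated monoid containing $M$ and module-finite over it (by Gordan's lemma, see \cite{bruns2009polytopes}); in particular $\ov M = \bigcup_{i=1}^k (u_i + M)$ for finitely many $u_i \in \ov M$. Recall also that normality means exactly $\ov M = M$, and that $a \in mV_m(M)$ is the same as $\frac{a}{m}\in V_m(M)$, which by Proposition \ref{reinterpret}(iii) means $\left(\frac1m M - \frac am\right)\cap \ZZ M \subseteq M$.

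For the direction $\cP(M)=\emptyset \Rightarrow M$ normal I would argue the contrapositive. Assume $M$ is not normal and fix $\delta \in \ov M \setminus M$. First I would produce a nonzero conductor element, i.e. some $c \in M$ with $c + \ov M \subseteq M$: writing each generator as $u_i = p_i - q_i$ with $p_i,q_i \in M$ (possible since $u_i \in \ZZ M = M-M$), the element $c := \sum_{i=1}^k q_i \in M$ satisfies $c + u_i = p_i + \sum_{l\ne i}q_l \in M$ for every $i$, hence $c + \ov M \subseteq M$; moreover $c \ne 0$, since $c=0$ would force $\ov M \subseteq M$, contradicting non-normality. I then claim $c \in \cP(M)$. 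Indeed, suppose $\frac{c}{m}\in V_m(M)$ for some $m>1$. Since $\delta \in \ov M$ we have $m\delta \in \ov M$, so $c + m\delta \in c + \ov M \subseteq M$, and therefore $w := \frac{c}{m}+\delta = \frac{c+m\delta}{m} \in \frac1m M$. As $w - \frac{c}{m} = \delta \in \ZZ M$, Proposition \ref{reinterpret}(iii) would give $\delta \in M$, a contradiction. Thus $c \notin \bigcup_{m>1} mV_m(M)$, so $c \in \cP(M)$ and $\cP(M)\ne \emptyset$.

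For the direction $M$ normal $\Rightarrow \cP(M)=\emptyset$, fix $a \in M$; it suffices to exhibit one $m>1$ with $\frac am \in V_m(M)$. Write $C(M) = \{x \mid \ell_j(x)\ge 0,\ j=1,\ldots,s\}$, where the $\ell_j$ are the primitive integral support functionals of the facets of $C(M)$, so that $\ell_j(\ZZ M)=\ZZ$ and $\ell_j(M)\subseteq \ZZ_{\gs 0}$. I would choose $m > \max\{1,\ell_1(a),\ldots,\ell_s(a)\}$ and verify Proposition \ref{reinterpret}(iii): given $g \in M$ with $\frac{g-a}{m}\in \ZZ M$, I must show $\frac{g-a}{m}\in M$, and since $M$ is normal it is enough to show $g-a \in C(M)$. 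For each $j$, the condition $\frac{g-a}{m}\in \ZZ M$ yields $\ell_j(g-a)\in m\ZZ$, i.e. $\ell_j(g)\equiv \ell_j(a)\pmod m$; because $0 \le \ell_j(a) < m$ and $\ell_j(g)\gs 0$, the least nonnegative residue pins down $\ell_j(g)\gs \ell_j(a)$, so $\ell_j(g-a)\gs 0$. Hence $g-a \in C(M)$ and $\frac{g-a}{m}\in C(M)\cap \ZZ M = M$, proving $\frac am \in V_m(M)$. As $a$ was arbitrary, every element of $M$ lies in some $mV_m(M)$ and $\cP(M)=\emptyset$.

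I expect the main obstacle to be the forward direction: a priori $\frac am \in V_m(M)$ imposes infinitely many constraints, one for each $g \in M$ that is congruent to $a$ modulo $m\ZZ M$ but lies outside $a + C(M)$ (e.g. points of $M$ on the facets). The decisive idea that collapses all of these at once is to take $m$ strictly larger than every $\ell_j(a)$, turning each congruence $\ell_j(g)\equiv \ell_j(a)\pmod m$ together with $\ell_j(g)\gs 0$ into the inequality $\ell_j(g)\gs \ell_j(a)$. On the other side, the only delicate point is the nonemptiness of the conductor, which the splitting $u_i = p_i - q_i$ resolves directly.
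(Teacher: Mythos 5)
Your proof is correct, but the implication $\cP(M)=\emptyset \Rightarrow M$ normal is handled quite differently from the paper. The paper argues directly: given $f=a-b\in \ZZ M$ with $nf\in M$, it first establishes the Minkowski-sum identity $(n+r)(a,b)=rb+n(a,b)$ for the ideal $(a,b)$ of $M$, deduces $ma+nb\in mb+M$, and then uses the hypothesis $\cP(M)=\emptyset$ only through the single element $nb$, choosing $m$ with $nb\in mV_m(M)$ and concluding $f\in \left(\frac{1}{m}M-\frac{nb}{m}\right)\cap \ZZ M\subseteq M$. That argument stays entirely inside $M$ and never invokes the normalization. You instead prove the contrapositive by exhibiting an explicit element of $\cP(M)$: a conductor element $c$ with $c+\ov{M}\subseteq M$, whose existence rests on the module-finiteness of $\ov{M}=C(M)\cap\ZZ M$ over $M$ --- a standard but genuinely extra input (finiteness of the integral closure). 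Your route is more structural: it shows that the entire conductor lies in $\cP(M)$ whenever $M$ is not normal, which nicely mirrors the ring-theoretic containment of the conductor in the splitting prime, and it works uniformly whether or not $M$ is seminormal. The paper's route is more elementary and self-contained. For the other implication your argument is essentially the paper's: you choose $m$ strictly larger than all facet heights $\ell_j(a)$, while the paper embeds $M$ as $\ZZ M\cap \NN^n$ and chooses $m$ larger than every coordinate of $a$; the coordinates play exactly the role of your support functionals.
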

\begin{proof}
We begin with the forward direction. Since $M$ is normal, by  Remark \ref{VmIsom} we can assume that $M$  is a  submonoid of $\NN^n$ for some  $n\in \ZZ
_{>0}$ and such that $M=\ZZ M \cap \NN^n$ \cite[Theorem 2.29]{bruns2009polytopes}. Fix $a\in M\subseteq \NN^n$ and chose $m\in \ZZ_{> 1}$ bigger than every entry in  $a$.  By Remark \ref{allmNorm} we have $m\in \cA(M)$ and $m\in \cA(\NN^n)$. By the choice of $m$, it is clear that 
\begin{equation}\label{interZZ0}
\left(\frac{1}{m}\NN^n-
\frac{a}{m}\right)\cap \ZZ^n\subseteq \NN^n.
\end{equation}
Thus, the left hand side expression in  \eqref{interZZ0} is equal to $\left(\frac{1}{m}\NN^n-
\frac{a}{m}\right) \cap \NN^n$. Intersecting this with $\ZZ M$ we obtain,
\begin{align*}
\left(\frac{1}{m}\NN^n-
\frac{a}{m}\right) \cap \NN^n\cap \ZZ M &=\left(\frac{1}{m}\NN^n-
\frac{a}{m}\right)\cap  M=\left(\frac{1}{m}M-
\frac{a}{m}\right)\cap  M.
\end{align*}
Therefore, $\left(\frac{1}{m}M-
\frac{a}{m}\right)\cap  M\subseteq \NN^n\cap \ZZ M=M,$
which shows $a\in mV_m(M)$.

We continue with the backward direction. Let $f\in \ZZ M$ be such that $nf\in M$ for some $n\in \ZZ_{>1}$. By Proposition \ref{redefnormal} it suffices to show $f\in M$. Write $f=a-b$ with $a,b\in M$, then $na\in nb+M$. Thus, $n(a,b)=b+(n-1)(a,b)$, where $(a,b)$ denotes the ideal of $M$ generated by the set $\{a,b\}$. It follows that $(n+r)(a,b)=rb+n(a,b)$ for every $r\in \ZZ_{>0}$. Hence,
\begin{equation*}\label{intClo}
ra+nb\in (n+r)(a,b)=rb+n(a,b)\subseteq rb + M\quad \text{ for every }  r\in \ZZ_{>0}.
\end{equation*}
By assumption there exists  $m\in \cA(M)$ such that $nb\in  mV_m(M)$. Therefore,
$$f=a-b=\left(\frac{ma+nb}{m}\right)-\frac{nb}{m}-b\in \left(b+\frac{1}{m}M\right)-\frac{nb}{m}-b=\frac{1}{m}M-\frac{nb}{m}.$$
On the other hand, $f\in \ZZ M$, then $f\in \left(\frac{1}{m}M-\frac{nb}{m}\right)\cap \ZZ M=M$,
which finishes the proof. 
\end{proof}

\begin{corollary}\label{FaceP}
There exists a face $\cF_M$ of $C(M)$ such that $M\setminus \cP(M) = M\cap \cF_M$. Moreover, the monoid $M\cap \cF_M$ is normal.
\end{corollary}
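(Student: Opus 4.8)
The plan is to realize $\cF_M$ as the intersection of $C(M)$ with the linear span of those generators lying outside $\cP(M)$, and then to read off normality from Theorem \ref{allVmNormal}. Write $S=M\setminus\cP(M)$. If $M$ is not seminormal, then $\cA(M)=\emptyset$ by Remark \ref{allmNorm}, so every $V_m(M)$ is empty, $\cP(M)=M$, and $S=\emptyset$; here I would simply take $\cF_M$ to be the empty face, making both assertions vacuous. So I would assume $M$ seminormal, in which case $0\in V_m(M)$ for some $m\in\cA(M)$ and hence $0\in S$. The first thing to record is the combinatorics of $S$ coming from Proposition \ref{indeedprime}: since $\cP(M)$ is a prime ideal, its complement $S$ is a submonoid, and for $a,b\in M$ one has the additivity $a+b\in S$ if and only if $a\in S$ and $b\in S$ (the forward implication uses that $\cP(M)$ is an ideal, the backward that it is prime). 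In particular $S$ is generated as a monoid by the set $G_S=\{\gamma_i\mid \gamma_i\in S\}$ of generators it contains. I let $L$ be the $\RR$-span of $G_S$ and set $\cF_M:=C(M)\cap L$.

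The key lemma I would establish is the saturation statement $M\cap L=S$. The containment $\supseteq$ is clear. For $\subseteq$, given $a\in M\cap L$, rationality lets me write $a=\sum_{\gamma_i\in G_S}c_i\gamma_i$ with $c_i\in\QQ$; clearing denominators and moving the negative terms to the other side produces an equation $Na+s_1=s_2$ with $N\in\ZZ_{>0}$ and $s_1,s_2$ nonnegative integer combinations of $G_S$, hence $s_1,s_2\in S$. Additivity applied to $Na+s_1=s_2\in S$ gives $Na\in S$, and then from $Na=a+(N-1)a$ it gives $a\in S$. With this in hand, $M\cap\cF_M=M\cap L=S=M\setminus\cP(M)$, which is the first assertion, provided I know that $\cF_M$ is a face.

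Showing that $\cF_M=C(M)\cap L$ is genuinely a face is the step I expect to be the main obstacle, since it requires upgrading the lattice-level additivity to an extremality property of the real cone. I would do this by proving that the image $\pi(C(M))$ of $C(M)$ under the quotient $\pi\colon\RR^q\to\RR^q/L$ is pointed. If it were not, a nontrivial relation would produce an element $w=\sum_{\gamma_j\notin S}c_j\gamma_j\in L$, which by rationality of the relevant cone I may take with $c_j\in\NN$ not all zero; then $w\in M\cap L=S$ by the saturation lemma, while $w$ lies in $\cP(M)$ because it has a generator from $\cP(M)$ as a summand and $\cP(M)$ is an ideal, a contradiction. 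Pointedness then yields the face property: if $x,y\in C(M)$ with $x+y\in\cF_M\subseteq L$, then $\pi(x)=-\pi(y)$ both lie in the pointed cone $\pi(C(M))$, forcing $\pi(x)=\pi(y)=0$, i.e. $x,y\in L$ and hence $x,y\in\cF_M$.

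Finally, for normality of $M':=M\cap\cF_M$ I would apply Theorem \ref{allVmNormal} to $M'$ and verify $\cP(M')=\emptyset$. Given $a\in M'=S$, there is $m\in\cA(M)$ with $\frac{a}{m}\in V_m(M)$; since $\cF_M$ is a cone, $\frac{a}{m}\in V_m(M)\cap\cF_M\subseteq V_m(M\cap\cF_M)=V_m(M')$ by Remark \ref{VmProjFace}, so $a\in mV_m(M')$ and thus $a\notin\cP(M')$. As $a\in M'$ was arbitrary, $\cP(M')=\emptyset$, and Theorem \ref{allVmNormal} shows that $M'=M\cap\cF_M$ is normal.
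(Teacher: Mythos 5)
Your proof is correct and follows essentially the same route as the paper: the face is extracted from the primality of $\cP(M)$ established in Proposition \ref{indeedprime}, and normality of $M\cap\cF_M$ is deduced exactly as in the paper by combining Theorem \ref{allVmNormal} with the inclusion $V_m(M)\cap\cF_M\subseteq V_m(M\cap\cF_M)$ of Remark \ref{VmProjFace}. The only difference is that where the paper simply cites the correspondence between prime ideals of a monoid and faces of its cone (Bruns--Gubeladze, Proposition 2.36), you supply a correct self-contained proof of that correspondence via the saturation $M\cap L=S$ and the pointedness of the quotient cone.
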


\begin{proof}
The first part follows from Proposition \ref{indeedprime} and the correspondence between prime ideals of monoids and faces of their cones \cite[Proposition 2.36]{bruns2009polytopes}. By Theorem \ref{allVmNormal} to show that $M\cap \cF_M$ is normal it suffices to show $M\cap \cF_M=\bigcup_{m\in \ZZ_{> 1}}mV_m(M\cap \cF_M).$ Moreover, we may assume $M$ is seminormal. We note that $$M\cap \cF_M= \bigcup_{m\in \ZZ_{>1}}mV_m(M)\subseteq  \bigcup_{m\in \ZZ_{>1}}mV_m(M\cap \cF_M),$$
where the last inclusion follows from  Remark \ref{VmProjFace}. Since we always have the other inclusion $\bigcup_{m\in \ZZ_{>1}}mV_m(M\cap \cF_M)\subseteq M\cap \cF_M$, the proof is complete.
\end{proof}

The previous proposition allows us to define the following invariant of affine monoids, the pure dimension. As we see in Corollary \ref{msdCor}, this new notion measures how far a monoid is from being normal. In  Theorem \ref{ThmDepthMPT} we use this invariant to provide lower bounds for the depth of affine semigoup rings. 

\begin{definition}\label{msd}
The face $\cF_M$   in Proposition \ref{FaceP} is called the {\it pure prime face} of $M$. We define the {\it pure dimension} of $M$   by $\mpdim(M):=\rank (M \cap \cF_M).$ If $\cF_M=\emptyset$,  we set $\mpdim(M)=-\infty$.
\end{definition}

\begin{corollary}\label{msdCor}
Let $M$ be an affine monoid. Then
\begin{enumerate}
\item $\mpdim(M)\ls \rank(M)$.
\item $\mpdim(M)\gs 0$ if and only if $M$ is seminormal.
\item $\mpdim(M)=\rank(M)$ if and only if  $M$ is normal.
\end{enumerate}
\end{corollary}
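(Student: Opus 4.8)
The plan is to dispatch all three parts by unwinding the definition $\mpdim(M) = \rank(M \cap \cF_M)$ together with the identity $\cP(M) = M \setminus (M \cap \cF_M)$ coming from Corollary \ref{FaceP}, and the dictionary between faces of $C(M)$ and prime ideals of $M$. Part (1) I would handle first, and it is immediate: $M \cap \cF_M$ is a submonoid of $M$, so $\ZZ(M \cap \cF_M) \subseteq \ZZ M$ forces $\rank(M \cap \cF_M) \leq \rank(M)$; and in the degenerate case $\cF_M = \emptyset$ the inequality holds by the convention $\mpdim(M) = -\infty$ of Definition \ref{msd}.

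For Part (2), I would observe that $\mpdim(M) \geq 0$ holds exactly when $\cF_M \neq \emptyset$, which (since a nonempty face contains the origin and $0 \in M$) is the same as $0 \in M \cap \cF_M = M \setminus \cP(M)$, i.e. $0 \notin \cP(M)$. Now $0 \notin \cP(M)$ means $0 \in m V_m(M)$ for some $m \in \ZZ_{>1}$, equivalently $0 \in V_m(M)$ for some $m$; by Remark \ref{Vmnotempty} this occurs precisely when $V_m(M) \neq \emptyset$, that is, when $m \in \cA(M)$. Thus $\mpdim(M) \geq 0$ is equivalent to $\cA(M) \neq \emptyset$, which by Remark \ref{allmNorm} is equivalent to $M$ being seminormal.

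Part (3) is where the real content lies, and here the crucial geometric input is that $\rank(M \cap \cF_M)$ equals the dimension of the face $\cF_M$ (meaning the dimension of its linear span). Since $\cF_M$ is a face of the rational cone $C(M) = \RR_{\geq 0}M$, it is generated as a cone by the generators $\gamma_i$ of $M$ that lie on it \cite{bruns2009polytopes}, so $C(M \cap \cF_M) = \cF_M$, whence $\dim \cF_M = \rank(M \cap \cF_M) = \mpdim(M)$; similarly $\dim C(M) = \rank(M)$. Consequently $\mpdim(M) = \rank(M)$ holds iff $\cF_M$ has the same dimension as $C(M)$, iff $\cF_M = C(M)$, using that a proper face of a cone has strictly smaller dimension than the cone itself. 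Finally, $\cF_M = C(M)$ is equivalent to $\cP(M) = M \setminus \cF_M = \emptyset$ (as $M \subseteq C(M)$), which by Theorem \ref{allVmNormal} is equivalent to $M$ being normal. Note that the hypothesis $\mpdim(M) = \rank(M)$ already rules out $\cF_M = \emptyset$, so $\cF_M$ is a genuine face throughout this argument.

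The main obstacle, though a mild one, is precisely the cone-theoretic step in Part (3): the identification $\rank(M \cap \cF_M) = \dim \cF_M$ and the passage from equal dimensions to $\cF_M = C(M)$. Both rest on standard facts about polyhedral cones, namely that a face is spanned by the monoid generators it contains and that only the improper face attains the full dimension of the cone; once these are invoked, everything else reduces to a routine unwinding of the definitions of $\cP(M)$, $V_m(M)$, and $\cA(M)$ together with Theorem \ref{allVmNormal}.
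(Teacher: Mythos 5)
Your proof is correct and follows essentially the same route as the paper: part (1) from the definition, part (2) by identifying $\mpdim(M)\gs 0$ with the nonemptiness of $\bigcup_m mV_m(M)$ (equivalently $\cA(M)\neq\emptyset$, via Remarks \ref{Vmnotempty} and \ref{allmNorm}), and part (3) from Theorem \ref{allVmNormal}. The only difference is that you spell out the cone-theoretic step ($\rank(M\cap\cF_M)=\dim\cF_M$ and a proper face having strictly smaller dimension) that the paper leaves implicit in the phrase ``follows from Theorem \ref{allVmNormal}''.
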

\begin{proof}
Part (1) follows directly from the definition. For part (2) we note that $\mpdim(M)\gs 0$ if and only if $\displaystyle\bigcup_{m\in \ZZ_{> 1}}mV_m(M)\neq \emptyset$, and by  Remarks \ref{allVmNormal} and \ref{Vmnotempty} this is equivalent to $M $ being seminormal. Part (3)  follows  from Theorem \ref{allVmNormal}.
\end{proof}

We finish this section with the following example.

\begin{example}\label{goodExample}
Let $M=\ZZ_{\gs 0}(2,0)+\ZZ_{\gs 0}(1,1)+\ZZ_{\gs 0}(0,1)$ be the monoid with set of generators  $\{(2,0), (1,1), (0,1)\}$. It is easy to see that $M=\ZZ_{\gs 0}^2 \setminus \{(2a+1,0)\mid a\in \ZZ_{\gs  0}\}$ (see Figure \ref{monoidEx}). We also have that $\cA(M)=\{m\in \ZZ_{> 1}\mid m \text{ is odd}\}$. Then, for every $m\in \cA(M)$ we have $mV_m(M)=\{(a,0)\mid a \text{ is even and } a<m \}$. Therefore, $\mpt(M)=1$, $\cF_M = \RR_{\gs 0}(1,0)$,  and $\mpdim(M)=1$. In particular, by Corollary \ref{msdCor}, $M$ is seminormal but not normal; this is consistent with \cite[Example 1.0.3]{Li}.
\end{example}

\begin{figure}[ht]
\begin{center}
\includegraphics[width=0.3\textwidth]{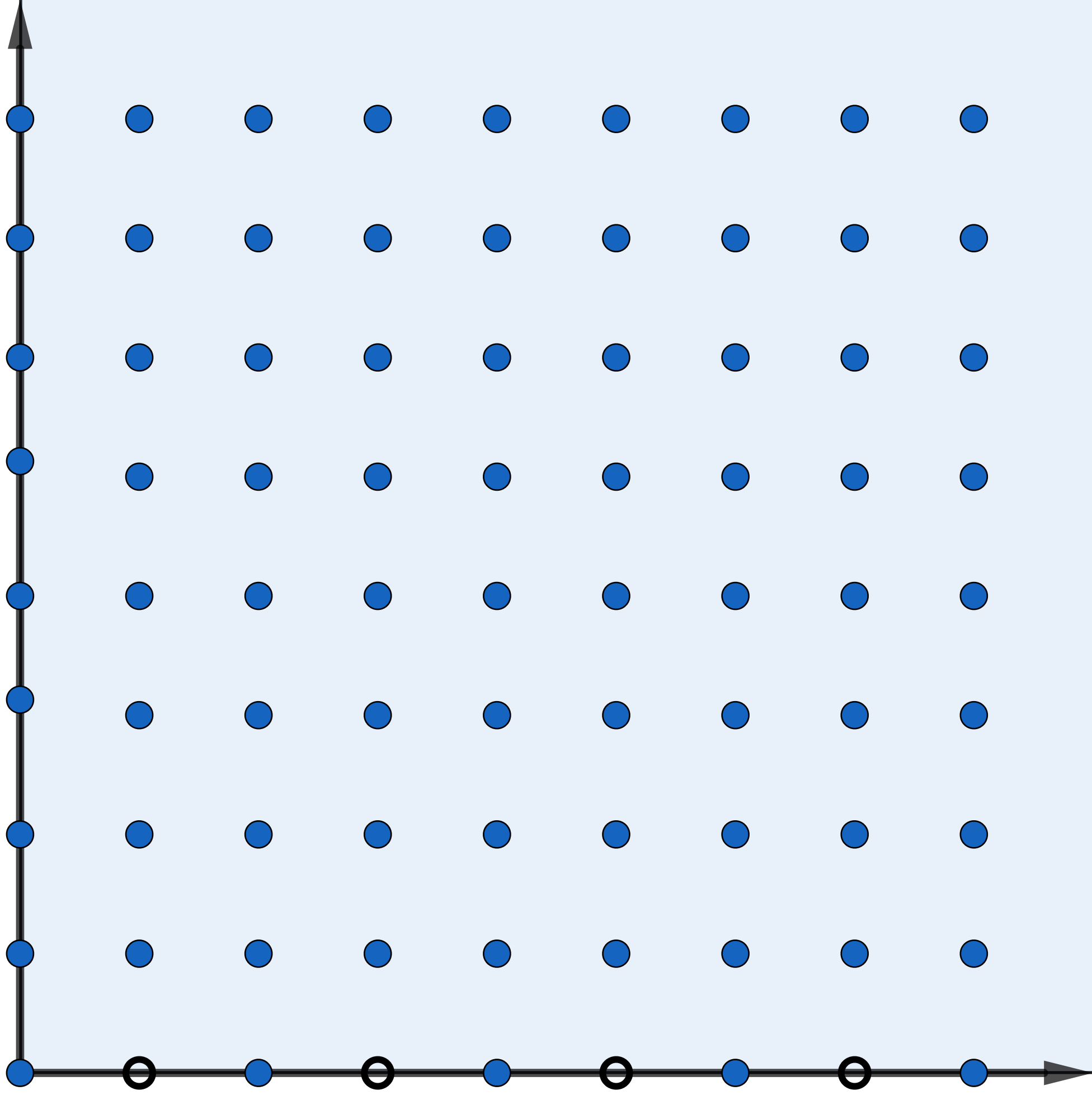}
\end{center}
\caption{ The monoid $M=\ZZ_{\gs 0}(2,0)+\ZZ_{\gs 0}(1,1)+\ZZ_{\gs 0}(0,1)$.}
	\label{monoidEx} 
\end{figure}

\section{Asymptotic growth of number of pure translations}\label{growthSec}

In the  short section, we study the asymptotic behavior of the number of elements in the sets $V_m(M)$. Throughout we adopt the same notation from Section \ref{SplitSection}.

\begin{definition}\label{BMdef}
Let $M$ be a seminormal affine monoid, and let $\cF_M$ be its pure prime face. For every  $m\in \cA(M)$ we define
$$B_m(M)=\bigcup_{\alpha\in V_m(M)}\left((\alpha-\cF_M)\cap \cF_M\right).$$
Moreover, we set
$$B(M)=\bigcup_{m\in \cA(M)}B_m(M).$$
\end{definition}

When $M$ is normal, there is a simple description of $B(M)$ as the region in Notation \ref{regionNorm}. We prove that these regions coincide in Lemma \ref{BMlemma}, which also  includes important properties of  $B(M)$.

\begin{notation}\label{regionNorm}
Let $M$ be a normal affine monoid. Let $\{H_1,\ldots, H_s\}$ be the supporting hyperplanes of $C(M)$ so that $C(M)=H_1^+\cap \cdots \cap H_s^+.$ 
Let $v_i\in \QQ^q$ be rational vectors such that $x\in H_i^+$ if and only if $\langle x,v_i\rangle \gs 0$  for $1\ls i\ls s$. We can further assume that $\langle x,v_i\rangle\in \ZZ$ for every $x\in \ZZ M$ and that $\min\{\langle \alpha,v_i\rangle\mid \alpha\in M\}=1$ \cite[Remark 1.72]{bruns2009polytopes}. We define $\Delta$ by 
$$\Delta = \{x\in \RR^q\mid 0\ls \langle x,v_i\rangle<1 \text{ for } 1\ls i\ls s\}.$$
\end{notation}

\begin{lemma}\label{BMlemma}
Let $M$ be a seminormal affine monoid. Then
\begin{enumerate}
\item $B(M)$ is a bounded set and it has  volume, i.e., its boundary has measure zero in the $\dim(\RR\Fcp)$-dimensional Lebesgue measure on  $\RR \Fcp$.
\item There exists an increasing sequence $\{p_t\}_{t\in \ZZ_{\gs 1}} \subseteq \cA(M)$ such that $B_{p_t}\subseteq B_{p_{t+1}}$ for every $t\in \ZZ_{\gs 1}$ and  $B(M)=\bigcup_{t\in \ZZ_{\gs 1}}B_{p_t}(M).$ 
\item $V_m(M)=\frac{1}{m}M\cap B(M)$ for every $m\in \cA(M)$.
\item If $M$ is normal and $\Delta$ is as in Notation \ref{regionNorm}, then $B(M)=\Delta$.
\end{enumerate}
\end{lemma}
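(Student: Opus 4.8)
Throughout, set $N:=M\cap\cF_M$, which is normal with cone $C(N)=\cF_M$ by Corollary~\ref{FaceP}, so that $\RR N=\RR\cF_M=\RR\Fcp$. The single fact underlying all four parts is that $V_m(M)\subseteq\cF_M$ for every $m\in\cA(M)$: indeed $mV_m(M)\subseteq\bigcup_{k}kV_k(M)=M\setminus\cP(M)=M\cap\cF_M$ by Definition~\ref{PMdef} and Corollary~\ref{FaceP}, whence $V_m(M)\subseteq\tfrac1m(M\cap\cF_M)\subseteq\cF_M$. Consequently $B(M)=\bigcup_{\alpha\in V(M)}\big((\alpha-\cF_M)\cap\cF_M\big)$ is the \emph{downward closure} of $V(M)$ in the cone order; in particular $B(M)$ is downward closed: if $x\in B(M)$ and $f\in\cF_M$ with $x-f\in\cF_M$, then $x-f\in B(M)$. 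I would prove the parts in the order (4), (2), (3), (1).

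For part~(2) I would enumerate the countably infinite multiplicative set $\cA(M)=\{m_1,m_2,\dots\}$ and put $p_t=\prod_{i\leq t}m_i$; by Lemma~\ref{mnboth} each $p_t\in\cA(M)$ and $p_t\mid p_{t+1}$, and by Lemma~\ref{vminvmn}(1) divisibility gives $V_{p_t}(M)\subseteq V_{p_{t+1}}(M)$, hence $B_{p_t}\subseteq B_{p_{t+1}}$. Since every $m\in\cA(M)$ divides some $p_t$, we get $\bigcup_t B_{p_t}=\bigcup_{m\in\cA(M)}B_m=B(M)$. For part~(4), with $M$ normal and $\cF_M=C(M)$, the key step is the description $V_m(M)=\tfrac1m M\cap\Delta$. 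The inclusion $\supseteq$ is immediate from integrality: if $0\leq\langle\alpha,v_i\rangle<1$ for all $i$ and $\beta\in\tfrac1m M$ with $\beta-\alpha\in\ZZ M$, then $\langle\beta-\alpha,v_i\rangle$ is an integer exceeding $-1$, hence $\geq 0$, so $\beta-\alpha\in C(M)\cap\ZZ M=M$. For $\subseteq$ one produces, whenever $\langle\alpha,v_{i_0}\rangle\geq 1$, a witness $\beta=\alpha+\delta\in\tfrac1m M$ with $\delta\in\ZZ M\setminus M$, by subtracting an element of $M$ of $v_{i_0}$-value $1$ and adding a deep point of the facet $\{v_{i_0}=0\}$ to keep $\beta\in C(M)$ while forcing $\langle\delta,v_{i_0}\rangle<0$. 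Given this, $B(M)\subseteq\Delta$ since $(\alpha-C(M))\cap C(M)\subseteq\Delta$ for $\alpha\in\Delta$; and $\Delta\subseteq B(M)$ follows from density of $\bigcup_m\tfrac1m M$ in $C(M)$: given $x\in\Delta$, pick $d\in\inte C(M)$ and approximate $x+\varepsilon d$ by some $\alpha\in\tfrac1m M$ with $\langle x,v_i\rangle<\langle\alpha,v_i\rangle<1$, so that $\alpha\in V_m(M)$ and $x\in(\alpha-C(M))\cap C(M)$.

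For part~(3) the inclusion $\subseteq$ is clear, since $\alpha\in V_m(M)\subseteq\cF_M$ gives $\alpha\in(\alpha-\cF_M)\cap\cF_M\subseteq B(M)$. For $\supseteq$, take $\alpha\in\tfrac1m M\cap B(M)$; then $\alpha\in\cF_M$ and there are $n\in\cA(M)$ and $\alpha'\in V_n(M)$ with $\alpha'-\alpha\in\cF_M$. Here I would use normality of $N$: since $m\alpha,\,n\alpha'\in M\cap\cF_M=N$, we get $mn(\alpha'-\alpha)\in\ZZ N\cap\cF_M=N$, so $\alpha'-\alpha\in\tfrac1{mn}N\subseteq\tfrac1{mn}M$. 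As $\alpha'\in V_n(M)\subseteq V_{mn}(M)$ by Lemma~\ref{vminvmn}(1), Proposition~\ref{minusmbelong} applied to $\alpha'=\alpha+(\alpha'-\alpha)$ yields $\alpha\in V_{mn}(M)$, and finally $\alpha\in\tfrac1m M\cap V_{mn}(M)=V_m(M)$ by Lemma~\ref{vminvmn}(1).

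Part~(1) is where the real work lies. Boundedness is easy: applying part~(4) to $N$ gives $B_m(M)\subseteq B_m(N)$ (as $V_m(M)\subseteq V_m(N)$ by Remark~\ref{VmProjFace}), hence $B(M)\subseteq B(N)=\Delta_N$, which is bounded because $\cF_M$ is a pointed full-dimensional cone in $\RR\cF_M$. The hard part is that $\partial B(M)$ is Lebesgue-null in $\RR\cF_M$, and I would split this into two estimates. First, each $B_{p_t}$ from part~(2) is a finite union of polytopes, so $\vol(\inte B_{p_t})=\vol(B_{p_t})$; since $B_{p_t}\nearrow B(M)$, monotone convergence gives $\vol(\inte B(M))\geq\vol\big(\bigcup_t\inte B_{p_t}\big)=\lim_t\vol(B_{p_t})=\vol(B(M))$, so $\vol\big(B(M)\setminus\inte B(M)\big)=0$. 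Second, to control $\overline{B(M)}\setminus B(M)$ I would fix $f_0\in\inte\cF_M$ and establish the density property that for $x\in\overline{B(M)}\cap\inte\cF_M$ and small $\varepsilon>0$ one has $x-\varepsilon f_0\in B(M)$ (approximate $x$ by points of $B(M)$ and use that the interior direction $f_0$ absorbs the error into $\cF_M$). Together with downward closedness this shows $B(M)\cap\ell$ is an interval and $\overline{B(M)}\cap\ell=\overline{B(M)\cap\ell}$ on each line $\ell$ parallel to $f_0$, so $\overline{B(M)}\setminus B(M)$ meets each such line in at most two points; by Fubini it is null (the leftover part in the null set $\partial\cF_M$ is ignored). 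Adding the two estimates gives $\vol(\partial B(M))=0$. The main obstacle is precisely this last step: one must rule out the downward-closed set $B(M)$ omitting a positive-measure sliver of its closure, and it is the combination of the interior-direction density property with the one-direction Fubini argument — showing the ``upper boundary'' is a graph — that makes this go through.
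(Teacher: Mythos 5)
Your argument is correct, and for parts (2) and (3) it coincides with the paper's proof: the same product sequence $p_t=m_1\cdots m_t$ via Lemma \ref{mnboth} and Lemma \ref{vminvmn}(1), and the same use of normality of $M\cap\cF_M$ to place the difference $\beta-\alpha$ back into a fractional copy of $M$ so that Proposition \ref{minusmbelong} and Lemma \ref{vminvmn}(1) finish part (3). The differences are in (4) and, more substantially, in (1). For (4) the paper simply quotes von Korff's identity $V_m(M)=\frac{\ZZ}{m}M\cap\Delta$ and the density of $\bigcup_m\frac{\ZZ}{m}M$; you re-derive that identity (the integrality argument for one inclusion, and the ``subtract an element of $v_{i_0}$-value one, add a deep point of the facet'' witness for the other), which is exactly the standard proof and only makes the lemma more self-contained. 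For boundedness in (1) the paper argues directly that $V_m(M)$ avoids $\Gamma=\bigcup_i(g_i+\cF_M)$ for generators $g_i$ of $M\cap\cF_M$, using Proposition \ref{minusmbelong} and Remark \ref{mnotvm}, whereas you deduce it from (4) applied to the normal monoid $M\cap\cF_M$; both work. The genuine divergence is the measure-zero claim: the paper observes that every $x\in\partial B(M)\setminus\partial\Fcp$ has the open cone $x+\cF_M^{\circ}$ disjoint from $\overline{B(M)}$, which gives a uniform density deficit at boundary points and concludes in two lines by Lebesgue's density theorem; you instead split $\partial B(M)$ into $B(M)\setminus\inte B(M)$, handled by monotone convergence against the polytopal exhaustion $B_{p_t}(M)$, and $\overline{B(M)}\setminus B(M)$, handled by a Fubini argument along lines parallel to an interior direction $f_0$, using downward closedness to show the set meets each such line in at most two points. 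Both routes rest on the same geometric fact --- that $B(M)$ is downward closed in the cone order --- but yours is more elementary (no density theorem) at the price of the measurability and ``upper boundary is a graph'' bookkeeping you sketch, all of which is routine to complete.
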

\begin{proof}
We begin with (1). Let $\{g_1, \ldots,g_l\}$ be a minimal set of generators of $M\cap \cF_M$ and consider the region $\Gamma = \cup_{i=1}^l(g_i+\cF_M)$. Let $\alpha\in \frac{1}{m}M\cap \Gamma$, then $\alpha = g_i+\eta$ for some $i$ and $\eta\in \cF_M$.  
Since $M\cap \cF_M$ is normal by Corollary \ref{FaceP}, it follows that $\eta\in   \frac{\ZZ}{m}(M\cap \cF_M)\cap \cF_M\subseteq \frac{\ZZ}{m}M$.  If $\alpha\in V_m(M)$, then Proposition \ref{minusmbelong} implies  $g_i\in V_m(M)$ which contradicts Remark \ref{mnotvm}. We conclude $V_m(M)$, and then $B_m(M)$, is contained in $\cF_M\setminus \Gamma$ which is bounded. 

Now, let $\partial$ and $^\circ$ denote boundary and interior on $\RR\Fcp$, respectively. Let $\mu$ denote the $\dim(\RR\Fcp)$-dimensional Lebesgue measure on $\RR \Fcp$. We note that for any  $x\in \partial B(M)\setminus \partial \Fcp$ we have $x+\cF_M^\circ \subseteq \Fcp\setminus \overline{B(M)}$; indeed, if $x+y\in \overline{B(M)}$ for some $y\in \Fcp^\circ$, then $x+y'\in B(M)$ for some $y'\in \Fcp^\circ$, which would imply $x\in B(M)^\circ.$ 
Therefore, for any $r>0$  and any $x\in \partial B(M)\setminus \partial \Fcp$ we have $\partial B(M)\cap B(r,x)\cap (x+\Fcp^\circ) =\emptyset$, where  $B(r,x)$ denotes the ball in $\RR\Fcp$ with radius $r$ and center $x$. Therefore, there exists  a real $c<1 $ such that for any such $r$ and $x$ we have $\frac{\mu(\partial B(M)\cap B(r,x))}{\mu(B(r,x))}<c.$ By Lebesgue's density theorem \cite[Corollary 2.14]{mattila}, we conclude $\mu(\partial B(M))=0$.

We continue with (2). Let $\{m_t \}_{t\in \ZZ_{\gs 1}}$ be the elements of   $\cA(M)$  ordered increasingly. For each $t\in \ZZ_{\gs 1}$ set $p_t=m_1\cdots m_t$ and notice $p_t\in \cA(M)$ by Lemma \ref{mnboth}. The conclusion now follows from Lemma \ref{mnotvm} (1).

Now we prove (3). Let $m_t\in \cA(M)$ and $\alpha \in\frac{1}{m_t}M\cap B(M)$, it suffices to show $\alpha\in V_{m_t}(M)$. By (2), we have $\alpha\in B_{p_i}(M)$ for some $i$. We may assume $i\gs t$ and then $m_t$ divides $p_i$. Therefore, there exists $\eta\in \cF_M$ and $\beta\in V_{p_i}(M)$ such that $\alpha+\eta = \beta$. Since $M\cap \cF_M$ is normal by Corollary \ref{FaceP}, it follows that $\eta\in \frac{\ZZ}{p_i}(M\cap \cF_M)\cap \cF_M\subseteq \frac{\ZZ}{p_i}M$. Thus, $\alpha\in \frac{1}{m_t}M\cap V_{p_i}(M)=V_{m_t}(M)$ by Proposition \ref{minusmbelong} and Lemma \ref{vminvmn}, which finishes the proof.

We finish with (4). If $M$ is normal we have $V_m(M)= \frac{\ZZ}{m}M\cap \Delta$  \cite[Lemma 3.11]{von2011f}. Thus, the equality $B(M)=\Delta$ follows as the set $\cup_{m\in \ZZ_{\gs 1}} \frac{\ZZ}{m}M$ is dense in $\RR^q$.
\end{proof}

From Lemma \ref{BMlemma} (4) we obtain that the pure threshold of normal monoids is rational.

\begin{proposition}\label{mstRat}
If $M$ is normal, then $\mpt(M)\in \QQ_{>0}$.
\end{proposition}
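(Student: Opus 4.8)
The plan is to exploit the explicit description $B(M)=\Delta$ from Lemma \ref{BMlemma}(4), together with the fact that $|\cdot|$ is the linear functional $x\mapsto x_1+\cdots+x_q$, which has rational (indeed integer) coefficients. First I would record that, since $M$ is normal, $\cA(M)=\ZZ_{>1}$ by Remark \ref{allmNorm}, and that $V(M)=\bigcup_{m}V_m(M)$ by Definition \ref{defObj}. Combining Lemma \ref{BMlemma}(3) with (4) I would then rewrite
$$V(M)=\Big(\bigcup_{m\in\ZZ_{>1}}\tfrac{1}{m}M\Big)\cap \Delta,$$
so that $\mpt(M)=\sup\{|\alpha|\mid\alpha\in V(M)\}$ is precisely the supremum of the linear functional $|\cdot|$ over this set.

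Next I would pass from this supremum over the countable set $V(M)$ to a maximum over the closure $\overline{\Delta}$. By Lemma \ref{BMlemma}(1) the region $B(M)=\Delta$ is bounded and contained in $\RR\Fcp=\RR M$, so $\overline{\Delta}$ is compact and $|\cdot|$ attains a maximum on it. The set $\bigcup_{m}\tfrac1m M$ is dense in the cone $C(M)\supseteq\Delta$, since any real conical combination of the $\gamma_i$ is approximated by rational ones sharing a common denominator $m$; because $\Delta$ is convex with nonempty interior, this makes $V(M)$ dense in $\Delta$. Continuity of $|\cdot|$ then gives $\sup_{V(M)}|\cdot|=\sup_{\Delta}|\cdot|=\max_{\overline{\Delta}}|\cdot|$, so $\mpt(M)=\max\{|x|\mid x\in\overline{\Delta}\}$.

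Finally I would conclude by rational-polytope geometry. By the previous paragraph $\overline{\Delta}=\{x\in\RR M\mid 0\ls\langle x,v_i\rangle\ls 1,\ 1\ls i\ls s\}$ is a bounded rational polytope, the vectors $v_i$ being rational by Notation \ref{regionNorm}. A linear functional with rational coefficients attains its maximum over a rational polytope at a vertex, and every vertex of a rational polytope is a rational point; hence $\max_{\overline{\Delta}}|\cdot|$ is a rational number and $\mpt(M)\in\QQ$. For positivity, I would note that any $x$ in the interior of $\overline{\Delta}$ satisfies $\langle x,v_i\rangle>0$ for all $i$, hence lies in the interior of $C(M)$ and is nonzero; since each generator has $|\gamma_i|>0$ the functional $|\cdot|$ is strictly positive on $C(M)\setminus\{0\}$, so $|x|>0$ and therefore $\mpt(M)>0$.

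The step I expect to demand the most care is the passage from the supremum over $V(M)$ to the maximum over $\overline{\Delta}$. One must verify both that $\overline{\Delta}$ is genuinely bounded and contained in $\RR M$ (which is exactly what Lemma \ref{BMlemma}(1) supplies, and what forces the normals $v_i$ to span $\RR M$), and that the maximizer of $|\cdot|$, possibly lying on a boundary face where some $\langle x,v_i\rangle=1$ and thus excluded from the half-open $\Delta$, is nonetheless approached from within by points of $V(M)$. Everything after that is routine bookkeeping with the vertices of a rational polytope.
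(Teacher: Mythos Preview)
Your proof is correct and follows essentially the same approach as the paper: both use Lemma~\ref{BMlemma}(4) to identify $B(M)=\Delta$ and then read off rationality (and positivity) from the fact that one is optimizing the rational linear functional $|\cdot|$ over a bounded rational polytope. The paper's proof is a two-line sketch that simply asserts the equality $\mpt(M)=\sup\{|\alpha|\mid\alpha\in B(M)\}$; your density argument via Lemma~\ref{BMlemma}(3) is one legitimate way to justify that equality, though a slightly shorter route is to note directly from Definition~\ref{BMdef} that every $\beta\in B(M)$ has the form $\alpha-\gamma$ with $\alpha\in V(M)$ and $\gamma\in C(M)\subseteq\RR^q_{\gs 0}$, whence $|\beta|\ls|\alpha|$ and the two suprema coincide without appealing to density.
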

\begin{proof}
The statement follows readily from Lemma \ref{BMlemma} (4) and the equality $\mpt(M)=\sup\{|\alpha|\mid \alpha\in  B(M)\}$.
\end{proof}

We now turn our focus to asymptotic growth of the number of elements in the sets $V_m(M)$. We define the following limit, which we prove exists in Theorem \ref{ratioExists}

\begin{definition}\label{msrDef}
Let $M$ be a seminormal affine monoid. Set $s=\mpdim(M)$ and let $\{m_t \}_{t\in \ZZ_{\gs 1}}$ be the elements of   $\cA(M)$  ordered increasingly. We define the {\it   pure ratio} of  $M$ as
$$
\mpr(M)=\lim\limits_{t\to\infty} \frac{|V_{m_t}(M)|}{m_t^s}.
$$
We define the {\it  pure signature} of  $M$ as
$$
\mps(M)=\lim\limits_{t\to\infty} \frac{|V_{m_t}(M)|}{m_t^{\rk(M)}}.
$$
\end{definition}

In the following theorem we show that $\mpr(M)$ exists as a limit, and that it equals the relative volume of $B(M)$.
 Here,   by {\it relative volume} with respect to a  lattice $L\subseteq H$ of rank $r$ in an $r$-dimensional hyperplane $H\subseteq \RR^q$, denoted by $\vol_L$, we mean the $r$-dimensional volume in $H$ normalized such that any fundamental domain of $L$ has volume one.

\begin{theorem}\label{ratioExists}
Let $M$ be a seminormal affine monoid. We have that
$$
\mpr(M)=\vol_{\ZZ(M\cap \Fcp)}(B(M))>0.
$$
In particular,
  $M$ is normal if and only if $\mps(M)>0$. Furthermore, in this case
   $\mps(M)\in \QQ_{>0}$.
\end{theorem}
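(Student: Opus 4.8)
The plan is to compute $\mpr(M)$ as a lattice-point count inside the bounded region $B(M)$ and then read off the remaining assertions. Set $N := M\cap\cF_M$, which is normal of rank $s=\mpdim(M)$ by Corollary~\ref{FaceP}, and let $L:=\ZZ N$, a rank-$s$ lattice spanning the $s$-dimensional space $\RR\cF_M$. The crucial first step is to upgrade the identity $V_m(M)=\frac{1}{m}M\cap B(M)$ of Lemma~\ref{BMlemma}(3) to $V_m(M)=\frac{1}{m}L\cap B(M)$ for every $m\in\cA(M)$. For the inclusion $\subseteq$, any $\alpha\in V_m(M)$ lies in $B(M)\subseteq\cF_M\subseteq\RR\cF_M$ and satisfies $m\alpha\in M$; since $\cF_M$ is a face of the pointed cone $C(M)$ we have $C(M)\cap\RR\cF_M=\cF_M$, so $m\alpha\in M\cap\cF_M=N$ and thus $\alpha\in\frac{1}{m}L$. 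For $\supseteq$, if $\alpha\in\frac{1}{m}L\cap B(M)$ then $m\alpha\in\ZZ N$ and $\alpha\in B(M)\subseteq\cF_M=C(N)$ (as $N$ generates the face $\cF_M$), hence $m\alpha\in C(N)\cap\ZZ N=N$ by normality of $N$, giving $\alpha\in\frac{1}{m}M\cap B(M)=V_m(M)$.

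With this identification $|V_m(M)|=|L\cap mB(M)|$. By Lemma~\ref{BMlemma}(1) the set $B(M)$ is bounded and its boundary in $\RR\cF_M$ has measure zero, so $B(M)$ is Jordan measurable; the classical asymptotics of lattice points in dilates of such a set give $|L\cap mB(M)|=\vol_L(B(M))\,m^s+o(m^s)$ as $m\to\infty$ through the integers. Evaluating along the increasing sequence $m_t\to\infty$ yields $\mpr(M)=\vol_L(B(M))=\vol_{\ZZ(M\cap\cF_M)}(B(M))$. I expect the genuine obstacle to be precisely this first reduction to the lattice $\ZZ(M\cap\cF_M)$: this is where normality of the face $N$ and the pointed-cone identity $C(M)\cap\RR\cF_M=\cF_M$ are essential, and identifying the correct lattice is what makes the leading coefficient match $\vol_{\ZZ(M\cap\cF_M)}(B(M))$ rather than a volume with respect to a coarser lattice. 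Once the lattice is pinned down, the counting is a routine application of lattice-point asymptotics.

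To see $\mpr(M)>0$, first suppose $s\gs 1$. Since $N$ generates $\cF_M$, the relative interior of the $s$-dimensional pointed cone $\cF_M$ contains an element $a\in N=M\cap\cF_M=\bigcup_{m}mV_m(M)$, so $\frac{a}{m_0}\in V_{m_0}(M)$ for some $m_0\in\cA(M)$, and $\frac{a}{m_0}$ again lies in the relative interior of $\cF_M$. Consequently $\bigl(\frac{a}{m_0}-\cF_M\bigr)\cap\cF_M\subseteq B_{m_0}(M)\subseteq B(M)$ contains all sufficiently small points of $\cF_M$ and is therefore $s$-dimensional, whence $\vol_L(B(M))>0$. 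If $s=0$ then $\cF_M=\{0\}$ forces $B(M)=\{0\}$ and $V_m(M)=\{0\}$, so $\mpr(M)=1>0$.

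Finally, $|V_{m_t}(M)|=\mpr(M)\,m_t^{s}+o(m_t^{s})$ with $\mpr(M)>0$ gives $\mps(M)=\lim_t |V_{m_t}(M)|/m_t^{\rk(M)}=\mpr(M)\lim_t m_t^{\,s-\rk(M)}$, which is positive when $s=\rk(M)$ and equals $0$ otherwise. By Corollary~\ref{msdCor}(3) the equality $s=\rk(M)$ holds exactly when $M$ is normal, proving that $\mps(M)>0$ if and only if $M$ is normal. In the normal case $\cF_M=C(M)$, $N=M$, and $B(M)=\Delta$ by Lemma~\ref{BMlemma}(4), so $\mps(M)=\mpr(M)=\vol_{\ZZ M}(\Delta)$. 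Since $\Delta$ is cut out by the rational inequalities $0\ls\langle x,v_i\rangle<1$, it is a bounded rational region, and computing its volume in an integral basis of $\ZZ M$ expresses $\vol_{\ZZ M}(\Delta)$ as the Euclidean volume of a rational polytope in rational coordinates, which is rational. Hence $\mps(M)\in\QQ_{>0}$.
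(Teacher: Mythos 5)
Your proposal is correct and takes essentially the same route as the paper: both identify $V_m(M)$ with $\frac{1}{m}\ZZ(M\cap\cF_M)\cap B(M)$ via Corollary~\ref{FaceP} and normality of the face, then obtain the limit as the relative volume of the Jordan-measurable set $B(M)$ (your lattice-point asymptotics are the paper's Riemann-sum argument), with positivity from an interior point of $\cF_M$ and the final claims from Corollary~\ref{msdCor} and Lemma~\ref{BMlemma}(4). You in fact supply more detail than the paper on the key lattice identification and on the $s=0$ case.
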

\begin{proof}
By Lemma \ref{BMlemma} (1), the  characteristic function  $\chi_{B(M)}$ is   Riemann integrable. Now, by Lemma \ref{BMlemma} (3) and Corollary \ref{FaceP} we have $V_{m_t}(M)=\frac{\ZZ}{m_t}(M\cap \Fcp)\cap B(M)$. Thus, $\frac{|V_{m_t}(M)|}{m_t^s}$ is a Riemann sum for  $\chi_{B(M)}$ with normalized volumes of the cells and mesh the diameter of a fundamental domain for $\frac{\ZZ(M\cap \Fcp)}{m_t}$.  Therefore, by taking the limit $t\to \infty$ we obtain that the limit exists and is equal to  $\vol_{\ZZ(M\cap \Fcp)}(B(M))$.  We note that $\vol_{\ZZ(M\cap \Fcp)}(B(M))$ is positive since $V_m(M)$ has interior points of $\Fcp$ (see Corollary \ref{FaceP}). The last statements follow from Corollary \ref{msdCor} and  Lemma \ref{BMlemma} (4).
\end{proof}

Theorem \ref{ratioExists} is related to previous computations done for the $F$-signature of normal semigroup rings \cite{ToricSingh,von2011f}.

\begin{example}
Let $M$ be as in Example \ref{goodExample}. We observe that $|V_m(M)|=\lceil\frac{m}{2}\rceil$ for every $m\in \cA(M)$. Then, $\mpr(M)=\frac{1}{2}$. We also have $B(M)=[0,1)$, therefore $\vol_{\ZZ(M\cap \Fcp)}(B(M))=\frac{1}{2}$, which is consistent with Theorem \ref{ratioExists}.
\end{example}

We end this section with a question motivated by Proposition \ref{mstRat}. This question is open, to the best of our knowledge, for seminormal monoids that are not normal.

\begin{question}\label{Question}
Let $M$ be a seminormal affine monoid. Is $\mpr(M)$ a rational number?
\end{question}

\section{Applications to affine semigroup rings}\label{SectionRings}

Throughout this section we adopt the following notation.
\begin{notation}\label{NotationSM}
Given an affine monoid as in Notation \ref{notation}, we let $R=\kk[M]=K[\bx^\balpha \mid \balpha\in M]\subseteq \kk[\bx]:=\kk[x_1\ldots, x_q]$ be  the
  {\it affine semigroup ring} of $M$.
 Given $m\in \ZZ_{>0}$, we set
$R^{1/m}=\kk\left[\frac{1}{m}M\right]$ the $\kk$-algebra $\kk\left[x^\balpha | \balpha\in \frac{1}{m}M\right]$. 
Given $m\in\ZZ_{>0}$ and $\alpha\in \frac{1}{m} M$, we consider   
$\phi_{\alpha}^m :R^{1/m}\to R$ the $\kk$-linear map given by 
$\phi_{\alpha}^m(\bx^{\beta})=\bx^{\beta -\alpha}$ if $\beta-\alpha\in M$ and zero otherwise. For an ideal $I\subseteq M$, we denote by $\bx^I$ the corresponding  $M$-homogeneous $R$-ideal, 
$$\bx^I=(\bx^\alpha\mid \alpha\in I).$$
For an  $M$-homogeneous element $f=\bx^\alpha\in R$, we denote by $\log(f)=\alpha\in M$ the corresponding element in $M$.

\end{notation}

\begin{remark}
We note that $R\cong R^{1/m}$ via the $\kk$-algebra map given by $\bx^\alpha\mapsto \bx^{\alpha/m}$.
\end{remark}

\begin{proposition}\label{Vmsplit}
Let $M$ be an affine monoid, and let $\alpha \in \frac{1}{m} M$. Then, $\phi_{\alpha}^m$ is a map of $R$-modules if and only if $\alpha\in V_m(M)$.
\end{proposition}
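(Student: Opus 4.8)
The plan is to show directly that the $\kk$-linearity of $\phi_\alpha^m$ upgrades to $R$-linearity precisely when the purity condition from Definition \ref{defObj} holds, by testing the Leibniz-type identity $\phi_\alpha^m(\bx^\gamma \cdot \bx^\beta) = \bx^\gamma \cdot \phi_\alpha^m(\bx^\beta)$ on monomials. Since $R^{1/m}$ is spanned over $\kk$ by the monomials $\bx^\beta$ with $\beta \in \frac{1}{m}M$, and $R$ acts through multiplication by $\bx^\gamma$ for $\gamma \in M$, the map $\phi_\alpha^m$ is $R$-linear if and only if this identity holds for every $\gamma \in M$ and every $\beta \in \frac{1}{m}M$. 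The core of the argument is therefore to unwind both sides of this identity using the explicit formula for $\phi_\alpha^m$ and match the resulting case distinctions against condition (ii) of Proposition \ref{reinterpret}.

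First I would fix $\gamma \in M$ and $\beta \in \frac{1}{m}M$ and compute each side. The left-hand side is $\phi_\alpha^m(\bx^{\gamma + \beta})$, which equals $\bx^{\gamma+\beta-\alpha}$ when $\gamma + \beta - \alpha \in M$ and is zero otherwise. The right-hand side is $\bx^\gamma \cdot \phi_\alpha^m(\bx^\beta)$, which equals $\bx^{\gamma + \beta - \alpha}$ when $\beta - \alpha \in M$ (so that $\phi_\alpha^m(\bx^\beta) = \bx^{\beta - \alpha}$ and then multiplication by $\bx^\gamma$ stays in $R$ since $M$ is closed under addition), and is zero when $\beta - \alpha \notin M$. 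The two sides automatically agree when $\beta - \alpha \in M$, because then $\gamma + \beta - \alpha = \gamma + (\beta - \alpha) \in M$ as well, and both sides equal $\bx^{\gamma + \beta - \alpha}$. The only possible discrepancy arises when $\beta - \alpha \notin M$ but $\gamma + \beta - \alpha \in M$: in that case the left-hand side is $\bx^{\gamma + \beta - \alpha} \neq 0$ while the right-hand side is $0$.

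Consequently, $\phi_\alpha^m$ is $R$-linear if and only if this bad case never occurs, i.e.\ if and only if for all $\gamma \in M$ and $\beta \in \frac{1}{m}M$ the implication ``$\gamma + \beta - \alpha \in M \Rightarrow \beta - \alpha \in M$'' holds. This is exactly condition (ii) of Proposition \ref{reinterpret} applied with $V = \frac{1}{m}M$, which by that proposition is equivalent to condition (iii), namely $\left(\frac{1}{m}M - \alpha\right) \cap \ZZ M \subseteq M$, which is precisely the defining condition $\alpha \in V_m(M)$ from Definition \ref{defObj}. This chain of equivalences completes the proof.

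I do not anticipate a serious obstacle here; the only point requiring care is the bookkeeping of the four cases in the Leibniz identity and the observation that three of the four are automatically consistent, so that $R$-linearity is controlled by a single implication. The work is essentially a translation: the verification that $\phi_\alpha^m$ respects the $R$-action reduces, after this case analysis, to the purity characterization already established in Proposition \ref{reinterpret}, so the main effort is simply invoking that proposition cleanly rather than proving anything genuinely new.
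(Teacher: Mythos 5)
Your proof is correct and follows essentially the same route as the paper: both reduce $R$-linearity of $\phi_\alpha^m$ to the implication ``$\gamma+\beta-\alpha\in M \Rightarrow \beta-\alpha\in M$'' for all $\gamma\in M$ and $\beta\in\frac{1}{m}M$, and then invoke the equivalence of conditions (ii) and (iii) in Proposition \ref{reinterpret} together with Definition \ref{defObj}. Your write-up merely spells out the case analysis that the paper compresses into one line.
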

\begin{proof}
We note that $\phi_{\alpha}^m$ is a map of $R$-modules if and only if for every $\gamma \in M$ and $\beta\in \frac{1}{m}M$ we have 
$\phi_{\alpha}^m(\bx^\gamma \bx^\beta)=\bx^\gamma \phi_{\alpha}^m( \bx^\beta)$. 
By the definition of $\phi_\alpha^m$ these are equivalent  to $\phi_{\alpha}^m(\bx^\gamma \bx^\beta)\neq 0$  implies $\bx^\gamma \phi_{\alpha}^m( \bx^\beta)\neq 0$, or equivalently to, 
$$
\gamma +\beta -\alpha \in M \text{ implies }\beta -\alpha \in M.
$$
The conclusion now follows from Proposition \ref{reinterpret}.
\end{proof}

In the next result, we use the semigroup splitting threshold to provide a bound for the Castelnuovo-Mumford regularity of  affine semigoup rings. We refer the reader to Section \ref{Background} for information about $a$-invariants and regularity.

\begin{theorem}\label{ThmRegMPT}
Let $M$ and $R$ be as in Notation \ref{NotationSM}. Then, $a_i(R)\leq -\mpt(M)$. As a consequence, 
\[
\reg(R)\leq \dim(R) - \mpt(M)= \rank(M) - \mpt(M).
\]
Moreover, if we present $R$ as $S/I$, where $S=\kk[x_1,\ldots,x_u]$ and each $x_i$ has degree $d_i:=\deg(x_i)=|\gamma_i|$ the degree of $\gamma_i$ for $i=1,\ldots,u$,   and $I \subseteq S$ is a homogeneous ideal, then  
\[
\beta(I) \leq  \dim(R)+ \sum_{i=1}^u(d_i-1)- \mpt(M)-1 = \rank(M) + \sum_{i=1}^u(d_i-1)- \mpt(M)-1.
\] 
\end{theorem}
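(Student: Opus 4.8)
The plan is to prove the estimate on the $a$-invariants first, since the two statements about regularity and about $\beta(I)$ follow formally from it together with Lemma \ref{Lemma regularity}. Fix $i$ and assume $H^i_\m(R) \neq 0$, for otherwise $a_i(R) = -\infty$ and there is nothing to prove. For each $m \in \cA(M)$ and each $\alpha \in V_m(M)$, Proposition \ref{Vmsplit} tells us that $\phi_{\alpha}^{m} \colon R^{1/m} \to R$ is a homomorphism of $R$-modules, and since $\phi_{\alpha}^{m}(\bx^\alpha) = 1$ it is surjective; being a surjection onto the free module $R$, it splits. With respect to the grading by $|\cdot|$ the map $\phi_{\alpha}^{m}$ is homogeneous of degree $-|\alpha|$, because $\phi_{\alpha}^{m}(\bx^\beta) = \bx^{\beta - \alpha}$ lowers degrees by $|\alpha|$. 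By functoriality of local cohomology on graded maps, applying $H^i_\m(-)$ to this split surjection produces a split surjection $H^i_\m(R^{1/m}) \twoheadrightarrow H^i_\m(R)$ that is homogeneous of degree $-|\alpha|$.

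Next I would record the crucial fact that $a_i(R^{1/m}) = a_i(R)/m$. This comes from the isomorphism of rings $R^{1/m} \cong R$ given by $\bx^\beta \mapsto \bx^{m\beta}$, which multiplies degrees by $m$; since $R^{1/m}$ is module-finite over $R$ (the composite $R \cong \kk[mM] \hookrightarrow R$ corresponds to $M$ being finite over the submonoid $mM$), local cohomology with support in $\m$ agrees with local cohomology computed over $R^{1/m}$, so the top nonzero internal degrees are related by the factor $m$. Now any homogeneous surjection of degree $-|\alpha|$ between graded modules $A \twoheadrightarrow B$ forces the top nonzero degrees to satisfy $a(B) \le a(A) - |\alpha|$, whence
\[
a_i(R) \;\le\; a_i(R^{1/m}) - |\alpha| \;=\; \frac{a_i(R)}{m} - |\alpha|.
\]
Rearranging gives $a_i(R)\,\tfrac{m-1}{m} \le -|\alpha|$, that is $a_i(R) \le -\tfrac{m}{m-1}|\alpha| \le -|\alpha|$. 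Taking the supremum over all $\alpha \in V(M)$ yields $a_i(R) \le -\mpt(M)$, as claimed.

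The consequences are then formal. Since $H^i_\m(R) = 0$ for $i > \dim(R) = \rank(M)$ and $a_i(R) \le -\mpt(M)$ for every $i$, we get $\reg(R) = \max_i\{a_i(R) + i\} \le -\mpt(M) + \rank(M)$, and the bound on $\Reg(R)$ follows at once from Lemma \ref{Lemma regularity}, which gives $\Reg(R) = \reg(R) + \sum_{i=1}^u (d_i - 1)$. For the bound on $\beta(I)$, present $R = S/I$ and observe that in a minimal graded free resolution of $R$ over $S$ one has $F_0 = S$ and $I = \IM(F_1 \to F_0)$, so the minimal generators of $I$ are exactly the images of a basis of $F_1$; hence $\beta(I) = \beta(F_1) = \beta_1^S(R)$. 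Since $\beta_1^S(R) - 1 \le \sup_j\{\beta_j^S(R) - j\} = \Reg(R)$, we conclude
\[
\beta(I) = \beta_1^S(R) \le \Reg(R) + 1 \le \rank(M) + \sum_{i=1}^u (d_i-1) - \mpt(M) + 1,
\]
and one checks on the second Veronese (Example \ref{ExVeronese} with $q=t=2$, where $\beta(I)=4$ and $\Reg(R)=3$) that this constant is sharp.

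The main obstacle, and the step demanding the most care, is the identification $a_i(R^{1/m}) = a_i(R)/m$ together with the compatibility of the two a priori distinct local cohomology functors — support in $\m_R$ versus the irrelevant ideal of $R^{1/m}$ — which rests on the module-finiteness of $R^{1/m}$ over $R$ and on correctly tracking the rescaling of degrees through the ring isomorphism $R^{1/m} \cong R$. Everything else is bookkeeping with graded maps, provided one verifies that $H^i_\m(-)$ genuinely transports the degree shift $-|\alpha|$ of $\phi_{\alpha}^{m}$ to local cohomology and preserves the splitting.
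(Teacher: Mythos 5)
Your proof of the inequality $a_i(R)\le -\mpt(M)$ is essentially the paper's argument: both exploit that $\phi_\alpha^m$ exhibits $R$, up to the shift by $|\alpha|$, as a graded direct summand of $R^{1/m}$ (you split the surjection $\phi_\alpha^m$, the paper splits the inclusion $1\mapsto \bx^\alpha$ -- these are the same splitting), apply $H^i_\m(-)$, and compare top degrees using $a_i(R^{1/m})=a_i(R)/m$, which you justify correctly via module-finiteness and the degree-$m$ rescaling. The only cosmetic difference is that the paper takes the maximum of $|\alpha|$ over $V_{m^t}(M)$ and lets $t\to\infty$ via Proposition \ref{mtinfty}, whereas you observe directly that $a_i(R)\le -\tfrac{m}{m-1}|\alpha|\le -|\alpha|$ for every $\alpha\in V_m(M)$ and take the supremum over $V(M)$; your shortcut is valid. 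The deduction of $\reg(R)\le\rank(M)-\mpt(M)$ is identical in both.

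The $\beta(I)$ part is where you diverge from the paper, and there you are right while the printed statement is not. Your chain $\beta(I)=\beta_1^S(R)\le \Reg(R)+1\le \rank(M)+\sum_{i=1}^u(d_i-1)-\mpt(M)+1$ is correct. The paper instead asserts $\beta(I)\le\Reg(I)\le\Reg(R)-1$; but since the minimal resolution of $I$ is the truncation of that of $R$, one has $\beta_j^S(I)=\beta_{j+1}^S(R)$ and hence $\Reg(I)=\sup_{j\ge 1}\{\beta_j^S(R)-j\}+1\le\Reg(R)+1$, not $\Reg(R)-1$. Your Veronese check confirms this: for $q=t=2$ one has $\beta(I)=4$, while the bound in the theorem evaluates to $2+3-2-1=2$, so the inequality as stated fails. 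In short, your proposal does not prove the theorem exactly as printed, but only because the printed constant is off by $2$; the bound you establish, with $+1$ in place of $-1$, is the correct one and is attained in that example.
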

\begin{proof}
We can assume that $M$ is seminormal. Let $m\in \cA(M)$ be such that $m>1$, which exists by Proposition \ref{redefnormal} (1). Fix $t\in \NN$ and $\alpha\in V_{m^t}(M)$. From Proposition \ref{Vmsplit} it follows that $\phi^{m^t}_\alpha$ gives a splitting of the homogeneous injective map $R(-|\alpha|)\hookrightarrow R^{1/m^t}$ defined as multiplication by $\bx^\alpha$. Thus, for each $i$, the induced map $$H_{\m}^i(R)(-|\alpha|)=H_{\m}^i(R(-|\alpha|))\to H_{\m}^i(R^{1/m^t})=H_{\m}^i(R)^{1/m^t}$$  
also splits.  By comparing the highest degrees of these modules we obtain 
$$a_i(R)+|\alpha|\ls \frac{a_i(R)}{m^t}.$$
By taking the maximum value of $|\alpha|$ over all $\alpha \in V_{m^t}(M)$ and letting $t\to \infty$, by Proposition \ref{mtinfty}  we obtain that $a_i(R)\ls -\mpt(M)$ as desired. The  inequality for regularity follows by definition, and the last equality by the relation between the rank of  semigroups and dimension of  semigroup rings (see e.g. \cite[p.257]{BrHe}). 

Finally, the inequalities involving $\beta(I)$ follow at once from the fact that $\beta(I) \leq \Reg(I) \leq \Reg(R)-1 = \reg(R)+\sum_{i=1}^u (d_i-1) - 1$ by Lemma \ref{Lemma regularity} and the previous inequalities.
\end{proof}

We now compute the pure threshold for a normal monoid that is Gorenstein. This follows previous work done for the $F$-pure threshold \cite[Theorem B]{DSNB}, which was motivated by a conjecture posted by Hirose, Watanabe and Yoshida \cite{HWY}.
\begin{theorem}
Assume that $M$ is normal of rank $d$. If $R$ is Gorenstein, then $\mpt(M) = -a_d(R)$.
\end{theorem}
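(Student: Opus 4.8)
The plan is to combine the already-established inequality $a_i(R)\leq -\mpt(M)$ from Theorem \ref{ThmRegMPT} with a Gorenstein-specific reverse inequality that isolates the top local cohomology module. The key structural input is that, for a normal (hence Cohen-Macaulay, by Theorem \ref{MainThmDepth}) affine semigroup ring of rank $d$, the only nonvanishing local cohomology is $H^d_\m(R)$, so that $\reg$ and the $a$-invariant interact only through the single index $i=d$; in particular $-\mpt(M)\geq a_d(R)$ is the only nontrivial instance of the inequality. What remains is to prove the matching lower bound $\mpt(M)\geq -a_d(R)$.

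First I would make the Gorenstein hypothesis concrete by recalling Danilov--Stanley: for a normal monoid $M$ with $R=\kk[M]$, the canonical module $\omega_R$ is the ideal spanned by the monomials $\bx^\alpha$ with $\alpha$ in the relative interior of the cone $C(M)$, i.e.\ $\alpha$ satisfying $\langle\alpha,v_i\rangle\geq 1$ strictly positive in the notation of \ref{regionNorm}. The Gorenstein assumption says $\omega_R\cong R(a)$ for a single shift, forcing the interior lattice points to be a translate $c+M$ of $M$ by the unique minimal interior element $c$, and then $a_d(R)=-|c|$. So the target equality becomes $\mpt(M)=|c|$, where $c$ is the smallest-degree lattice point with $\langle c,v_i\rangle\geq 1$ for all $i$.

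Next I would use the description of $\mpt$ as a supremum over the region $B(M)=\Delta$ furnished by Lemma \ref{BMlemma} (4) and Proposition \ref{mstRat}: since $M$ is normal, $\mpt(M)=\sup\{|\alpha|\mid \alpha\in\Delta\}$ where $\Delta=\{x\mid 0\leq\langle x,v_i\rangle<1\}$. The plan is to show this supremum equals $|c|$. For the inequality $\mpt(M)\geq |c|$, I would exhibit points of $\Delta$ of degree approaching $|c|$; the natural candidates are translates $c-\eta$ for small $\eta\in M$, or rather points just inside the ``far'' facets of $\Delta$, using that $c$ is the Gorenstein generator sitting at the corner where every $\langle\cdot,v_i\rangle$ simultaneously jumps from $<1$ to $\geq 1$. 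For the reverse inequality $\mpt(M)\leq |c|$, I would argue that any $\alpha\in\Delta$ has $\langle c-\alpha,v_i\rangle>0$ for every $i$, so $c-\alpha\in C(M)^{\circ}$, and since the grading (degree $|\cdot|$) is a nonnegative functional vanishing only at the origin of the cone, $|\alpha|\leq|c|$ with equality only in the limit; here the fact that $\deg$ is strictly positive on $C(M)\setminus\{0\}$ is what converts the facial inequalities into a degree bound.

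The main obstacle I anticipate is the reverse inequality $\mpt(M)\leq -a_d(R)$ and in particular controlling the supremum over the half-open region $\Delta$: because $\Delta$ is not compact-closed on the facets where $\langle x,v_i\rangle<1$ is strict, I must make sure the supremum of $|\alpha|$ is genuinely attained (in the limit) exactly at the point $c$ that generates $\omega_R$, rather than at some other corner of $\Delta$, and that the Gorenstein condition is precisely what forces this unique corner to exist and to carry the maximal degree. Establishing that the degree functional is maximized on $\Delta$ at the Gorenstein generator, and that this maximum value is $|c|=-a_d(R)$, is the crux; the rest is bookkeeping combining Theorem \ref{ThmRegMPT}, the Danilov--Stanley formula, and Lemma \ref{BMlemma} (4).
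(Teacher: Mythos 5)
Your proof is correct, and it takes a genuinely different route from the paper's. The paper first reduces to a perfect field of characteristic $p>0$ (using that both $\mpt(M)$ and $a_d(R)$ are independent of $\kk$) and then argues through Fedder's criterion: writing $\Hom_R(R^{1/p^e},R)$ as $(f_e+I^{[p^e]})/I^{[p^e]}$, it computes $\deg(f_e)=(p^e-1)(D+a_d(R))$ by comparing minimal free resolutions, extracts a monomial of $f_e$ outside $\n^{[p^e]}$, and thereby produces $\beta(e)\in V_{p^e}(M)$ with $|\beta(e)|=-a_d(R)\tfrac{p^e-1}{p^e}$. You instead stay characteristic-free: Danilov--Stanley plus the Gorenstein hypothesis give a lattice point $c\in\ZZ M$ with $\langle c,v_i\rangle=1$ for all $i$ and $a_d(R)=-|c|$, and Lemma \ref{BMlemma}(4) identifies $\mpt(M)$ with $\sup\{|\alpha| : \alpha\in\Delta\}$; the points $\tfrac{m-1}{m}c\in\frac{1}{m}M\cap\Delta=V_m(M)$ (note $(m-1)c\in\ZZ M\cap C(M)=M$ by normality) give $\mpt(M)\geq|c|$, while Theorem \ref{ThmRegMPT} --- or your direct observation that $\alpha\in\overline{\Delta}$ forces $c-\alpha\in C(M)$ and hence $|\alpha|\leq|c|$ --- gives the reverse. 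Two remarks. First, the ``crux'' you worry about (whether the supremum over the half-open region $\Delta$ is attained at $c$ rather than at another corner) is a non-issue: you never need to locate a maximizer, only the two one-sided inequalities. Second, the one step to state precisely, with a citation or a short argument, is that Gorensteinness forces $\langle c,v_i\rangle=1$ for \emph{every} $i$ (this is standard for normal semigroup rings and is exactly what places $c$ on the closure of $\Delta$). It is worth noting that your witnesses $\tfrac{p^e-1}{p^e}c$ are literally the paper's $\beta(e)$, reached geometrically rather than through Fedder's criterion; your route buys a fully characteristic-free argument in the spirit of the paper, at the mild cost of leaning on Lemma \ref{BMlemma}(4) and the explicit description of the canonical module, both of which require normality --- a hypothesis the theorem already assumes.
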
  
\begin{proof} 
Since $M$ is normal, the Gorenstein property of $R = \kk[M]$ is independent of the field $\kk$ \cite[Remark 6.34]{bruns2009polytopes}. The pure threshold  $\mpt(M)$ is also independent of $\kk$. If $\kk$ has characteristic zero, then $a_d(\kk[M]) = a_d(\QQ[M]) = a_d(\FF_p[M])$ for all $p \gg 0$ (see for instance by \cite[Lemma 4.3]{DDSM} adapted to the positively graded case). If $\LL$ is any field extension of $\kk$, and $\m$ is the homogeneous maximal ideal of $R$, then we have graded isomorphisms $H^d_\m(R) \otimes_\kk \LL \cong H^d_\m(R\otimes_\kk \LL) = H^d_\m(\LL[M])$. Thus, we may assume that $\kk$ is a perfect field of characteristic $p>0$. We can write $R = S/I$, where $S=\kk[T_1,\ldots,T_u]$, each $T_i$ maps to a generator $\bx^{\gamma_i}$ of $R$ and $\deg(T_i) = |\gamma_i| = d_i>0$. Since $R$ is Gorenstein, we have that $\Hom_R(R^{1/p^e},R) \cong (I^{[p^e]}:_S I)/I^{[p^e]} = (f_e + I^{[p^e]})/I^{[p^e]}$ \cite{FedderFputityFsing}. If $\mathbb{F}_\bullet: 0 \to F_c \to \ldots \to F_0=S \to R \to 0$ is a minimal free resolution of $R$ over $S$, then $c=\Ht(I)$ and $F_c = S(-D-a_d(R))$, where $D=\sum_{i=1}^u d_i$. The minimal free resolution of $\mathbb{F}_\bullet^{e}: 0 \to F_c^e \to \ldots \to F_0^e=S \to S/I^{[p^e]} \to 0$ of $S/I^{[p^e]}$ is such that $F_c^e=S(p^e(-D-a_d(R)))$. The comparison map $\mathbb{F}_\bullet^e \to \mathbb{F}_\bullet$ induced by the natural surjection $S/I^{[p^e]} \to R$ in homological degree $c$ is $S(p^e(-D-a_d(R))) \to S(-D-a_d(R))$. Furthermore, it is  given, up to an invertible element, by multiplication by $f_e$  \cite[Lemma 1]{VraciuGorTightClosure}. Since such a map is homogeneous of degree zero, we conclude that $\deg(f_e) = (p^e-1)(D+a_d(R))$. Let $\n=(T_1,\ldots,T_u)$. As $f_e \notin \n^{[p^e]}$ by Fedder's criterion \cite{FedderFputityFsing}, there is a monomial $T_1^{n_1} \cdots T_u^{n_u}$ in its support with $0 \leq n_i \leq p^e-1$ for all $i$. This implies that the map $S/I \to (S/I)^{1/p^e}$ sending $1 \mapsto (T_1^{p^e-1-n_1} \cdots T_u^{p^e-1-n_u})^{1/p^e}$ splits. Via the isomorphism $S/I \cong R$, this means that the map $R \to R^{1/p^e} = \kk[M^{1/p^e}]$ sending $1 \mapsto \left(\bx^{\gamma_1(p^e-1-n_1)} \cdots \bx^{\gamma_u(p^e-1-n_u)}\right)^{1/p^e}=\bx^{\beta(e)}$ splits, and so, $\beta(e) \in V_{p^e}(M)$. Note that 
\[
|\beta(e)| = \frac{\sum_{i=1}^u |\gamma_i|(p^e-1-n_i)}{p^e} = \frac{(p^e-1)D - \sum_{i=1}^u n_id_i}{p^e} = \frac{(p^e-1)D - \deg(f_e)}{p^e} = -a_d(R) \frac{p^e-1}{p^e}.
\]
We have  that $\mpt(M) \geq \lim\limits_{e \to \infty} |\beta(e)| = -a_d(R)$ by Proposition \ref{mtinfty}. As the other inequality always holds by Theorem \ref{ThmRegMPT}, we have equality.
\end{proof}

From the previous result, one may wonder if the converse is true. In particular, as $\fpt(\kk[M])=a_d(\kk[M])$ implies that $\kk[M]$ is Gorenstein if $\kk[M]$ has a structure of standard graded $\kk$-algebra \cite{STV}. This motivates the following question.

\begin{question}
Assume that $M$ is normal of rank $d$. If $\mpt(M) = -a_d(R)$, is  $R$ is Gorenstein?
\end{question} 

We now provide a bound for the depth of $R$, which recovers Hochster's result that normal semigroup rings are Cohen-Macaulay \cite[Theorem 1]{HochsterToric}.

\begin{theorem}\label{ThmDepthMPT}
Let $M$ and $R$ be as in Notation \ref{NotationSM}. Then, 
$\mpdim(M)\leq \Depth(R).$
\end{theorem}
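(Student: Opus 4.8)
The plan is to show that $H^i_\m(R)=0$ for all $i<s:=\mpdim(M)$, from which $\Depth(R)=\min\{i:H^i_\m(R)\neq 0\}\geq s$ follows at once. I may assume $M$ is seminormal (otherwise $\mpdim(M)=-\infty$ and there is nothing to prove) and that $s\geq 1$ (the case $s=0$ being trivial). The guiding idea is that each $\alpha\in V_m(M)$ produces a \emph{split} copy of $H^i_\m(R)$ inside $H^i_\m(R^{1/m})$; these copies are pairwise disjoint, are shifted by a uniformly bounded amount, and there are roughly $m^s$ of them, whereas the Hilbert function of $H^i_\m(R)$ grows polynomially of degree only $\leq i-1$. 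So $\sim m^s$ copies cannot be accommodated unless $i\geq s$.

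First I would record the splitting on local cohomology. Fix $m\in\cA(M)$. As in the proof of Proposition \ref{vmfinite}, the translates $\{\alpha+M\}_{\alpha\in V_m(M)}$ are pairwise disjoint and their union is pure in $\frac1m M$; combined with Proposition \ref{Vmsplit} this yields a decomposition of graded $R$-modules $R^{1/m}\cong\bigl(\bigoplus_{\alpha\in V_m(M)}R(-|\alpha|)\bigr)\oplus C$. Applying $H^i_\m(-)$ and using the isomorphism $R\cong R^{1/m}$ that scales degrees by $1/m$, I obtain, with $h_{i,j}:=\dim_\kk H^i_\m(R)_j$, the inequality
\begin{equation}\label{starDepth}
\sum_{\alpha\in V_m(M)}h_{i,\,e-|\alpha|}\ \leq\ h_{i,\,me}\qquad\text{for every }e\in\tfrac1m\ZZ .
\end{equation}

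Next I would bring in the growth of $H^i_\m(R)$. Writing $R=S/I$ with $S=\kk[x_1,\dots,x_u]$ as in Theorem \ref{ThmRegMPT}, graded local duality identifies the graded Matlis dual of $H^i_\m(R)$ with the deficiency module $K_i:=\Ext^{\,u-i}_S(R,\omega_S)$, which is finitely generated with $\dim_R K_i\leq i$ \cite{BrodmannSharp}. Hence $h_{i,-j}$ agrees, for $j\gg 0$, with a polynomial in $j$ of degree $\dim_R K_i-1\leq i-1$, so that $f(L):=\sum_{j\geq -L}h_{i,j}$ satisfies $f(L)\sim c\,L^{\dim_R K_i}$ (and $f$ is bounded when $\dim_R K_i=0$). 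Because $|\alpha|<bu$ is uniformly bounded for all $\alpha\in V(M)$ by Remark \ref{sstfinite}, summing \eqref{starDepth} over the $e\in\frac1m\ZZ$ lying in a fixed window and absorbing the bounded shifts $|\alpha|$ into the boundary produces, for each fixed large $L$, an estimate of the shape $|V_m(M)|\,f(L)\leq f(mL)$.

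Finally I would let $m=m_t\to\infty$ along $\cA(M)$. By Theorem \ref{ratioExists}, $|V_{m_t}(M)|\sim\mpr(M)\,m_t^{\,s}$ with $\mpr(M)>0$, while for fixed $L$ one has $f(m_tL)/f(L)\sim m_t^{\dim_R K_i}$; thus $m_t^{\,s}\lesssim m_t^{\dim_R K_i}$, forcing $s\leq\dim_R K_i\leq i$, contradicting $i<s$. (When $\dim_R K_i=0$ the quantity $f(m_tL)$ stays bounded while $|V_{m_t}(M)|\to\infty$, an immediate contradiction.) Therefore $H^i_\m(R)=0$ for $i<s$, giving $\Depth(R)\geq s=\mpdim(M)$. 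I expect the main obstacle to be making the window estimate $|V_m(M)|\,f(L)\leq f(mL)$ fully rigorous: one must check that restricting \eqref{starDepth} to integer degrees (where $h_{i,j}$ is supported) and to a bounded band interacts correctly both with the $\frac1m\ZZ$-grading of $R^{1/m}$ and with the bounded but nonzero shifts $|\alpha|$, so that the boundary contributions are genuinely of lower order and the comparison of the growth exponents $s$ and $\dim_R K_i$ survives.
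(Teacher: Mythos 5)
Your argument is correct, but it takes a genuinely different route from the paper's. The paper fixes $t=\Depth(R)$, presents $R=S/J$ over a polynomial ring $S$ in $u$ variables, and shows that $\Ann_R\bigl(\Ext^{u-t}_S(R,S)\bigr)\subseteq \bx^{\cP(M)}$: if an $M$-homogeneous $f$ with $\log(f)\in mV_m(M)$ annihilated this $\Ext$ module, then by Matlis duality multiplication by $f$ would be zero on $H^t_\m(R)$, yet the splitting $\phi^m_{\log(f)}$ of Proposition \ref{Vmsplit} factors the identity of $H^t_\m(R)$ through that multiplication on $H^t_\m(R^{1/m})$, forcing $H^t_\m(R)=0$, a contradiction. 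This yields $\mpdim(M)=\dim\bigl(R/\bx^{\cP(M)}\bigr)\leq \dim\Ext^{u-t}_S(R,S)\leq t$. You instead kill $H^i_\m(R)$ for every $i<s=\mpdim(M)$ by an asymptotic count: the disjoint pure translates give $\bigoplus_{\alpha\in V_m(M)}R(-|\alpha|)$ as a graded direct summand of $R^{1/m}$, whence your inequality $|V_m(M)|\,f(L)\leq f(mL)$ for the cumulative Hilbert function $f$ of $H^i_\m(R)$; this is incompatible with $|V_{m_t}(M)|\sim\mpr(M)\,m_t^{s}$, $\mpr(M)>0$ (Theorem \ref{ratioExists}) once $f(L)\sim cL^{\dim K_i}$ with $\dim K_i\leq i<s$. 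Both proofs ultimately rest on local duality and the bound $\dim\Ext^{u-i}_S(R,S)\leq i$; the paper's is shorter and needs only one well-chosen multiplication map, while yours requires the positivity of the pure ratio but replaces the splitting-prime argument by a transparent counting one (and in fact gives $\dim K_i\geq s$ for every nonvanishing $H^i_\m(R)$, not just $i=\Depth(R)$). Two small points. First, the window estimate you flag as the main obstacle is actually clean: since every $|\alpha|\geq 0$ and all terms are nonnegative, summing your displayed inequality over $e\in\frac{1}{m}\ZZ$ with $e\geq -L$ gives exactly $f(mL)$ on the right and at least $|V_m(M)|f(L)$ on the left, because for each $\alpha$ the index set $\{e-|\alpha|\}$ contains all integers $\geq -L$; there are no boundary corrections at all. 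Second, since $S$ is not standard graded, the Hilbert function of $K_i$ is a quasi-polynomial rather than a polynomial, but the cumulative sum still satisfies $f(L)\sim cL^{\dim K_i}$ with $c>0$, which is all your comparison of exponents uses.
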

\begin{proof}
We can assume that $M$ is seminormal. Let $S=\kk[y_1,\ldots,y_u]$ endowed with the $M$-grading given by $\deg(y_i)=\gamma_i$.
We set a surjection of $\kk$-algebras $\rho:S\to R$ by $y_i\mapsto \bx^{\gamma_i}$.
Let $\m=(\bx^{\gamma_1},\ldots,\bx^{\gamma_u})\subseteq R$ and $\eta=(y_1,\ldots,y_u)\subseteq S$. We note that $\rho(\eta)=\m.$ Set $J=\Ker(\rho)$, so that $R=S/J$.

Set $t=\Depth(R) =\min\{i\; |\; \Ext^{u-i}_S (R,S)\neq 0\}$. We first show that $\Ann_R \left(\Ext^{u-t}_S(R,S)\right)\subseteq \bx^{\cP(M)}$; we proceed  by contradiction.
Suppose that there exists an $M$-homogeneous element $f\in R$ such that  $f\in \Ann_R \left(\Ext_S^{u-t}(R,S)\right)\setminus  \bx^{\cP(M)}$.
Let $m\in \NN$ be such that $\log(f)\in mV_m(M)$.
Since the multiplication map $\Ext_S^{u-t}(R,S)\FDer{f}\Ext_S^{u-t}(R,S)$ is the zero map, we have that 
$H^t_\m(R)\FDer{f} H^t_\m(R)$ is the zero map by Matlis duality \cite{Matlis}. 
Thus,
$H^t_\m(R^{1/m})\FDer{f^{1/m}} H^t_\m(R^{1/m})$ is the zero map as well. Since the  composition of $R\FDer{\iota} R^{1/m}\FDer{f^{1/m}} R^{1/m}\FDer{\phi_{\log(f)}^m} R$ is the identity, we have the same for the composition
$$
H^{t}_\m(R)\FDer{\iota} H^t_\m(R^{1/m})\FDer{f^{1/m}} H^t_\m(R^{1/m})\FDer{\phi_{\log(f)}^m}H^t_\m(R).
$$
Since the middle map is zero, we have that $H^{t}_\m(R)=0$, which is not possible because $t=\Depth(R).$

Since $\Ann_R \left(\Ext^{u-t}_S(R,S)\right)\subseteq  \bx^{\cP(M)}$, we have that 
\begin{equation}\label{EqDepthSdim1}
\dim \left(\Ext^{u-t}_S(R,S)\right)\geq  \dim \left(R/\bx^{\cP(M)}\right)=\mpdim(M).
\end{equation}
Since $S$ is a Gorenstein ring, its injective resolutions as $S$-module is given by
$$
0\to S \to E^0\to E^2\to \ldots E^u,
$$
where $E^j=\bigoplus_{\Ht(\fp)=j} E(S/\fp)$ is the direct sum of the injective hulls of all the prime ideals in $S$ of height $j$ \cite{Bass}. Therefore, 
\begin{equation}\label{EqDepthSdim2}
\dim \left(\Ext^{u-t}_S(R,S)\right)\leq t.
\end{equation}
Combining Inequalities \eqref{EqDepthSdim1} and \eqref{EqDepthSdim2} we obtain the desired result.
\end{proof}

We now relate the pure ratio of a monoid $M$  to the splitting ratio of $R$ \cite{AE} and $F$-signature \cite{SmithVdB,HunekeLeuschke,Tucker}.
\begin{proposition}\label{Prop MPR SR}
Let $M$ and $R$ be as in Notation \ref{NotationSM}. 
If $\Char(\kk)\in\cA(M)$, then $\mpr(M)$ is the $F$-splitting ratio of $R$. As a consequence, 
if $\Char(\kk)$ is a prime number and $M$ is normal, then $\mps(M)$ equal to the $F$-signature of $R$.
\end{proposition}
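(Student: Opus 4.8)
The plan is to identify the purely combinatorial quantity $|V_{p^e}(M)|$ with the number of free $R$-summands of $R^{1/p^e}=F^e_*R$, and then to match the two limits by comparing their growth in $e$. Throughout set $p=\Char(\kk)$; since $p\in\cA(M)$, Lemma \ref{mnboth} gives $p^e\in\cA(M)$ for all $e\gs 1$, and the splitting $\phi^p_0$ of Proposition \ref{Vmsplit} (available because $0\in V_p(M)$) shows that $R=\kk[M]$ is $F$-split, so that its $F$-splitting ratio in the sense of \cite{AE} is defined and positive. I would first reduce to $\kk$ perfect, so that $R^{1/p^e}=\kk[\tfrac{1}{p^e}M]$ is a finitely generated $R$-module; for a general $F$-finite field the free rank of $R^{1/p^e}$ acquires the extra factor $[\kk:\kk^p]^e$, which is cancelled by the normalization entering the definitions of both the $F$-splitting ratio and the $F$-signature, so no generality is lost.

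The key step is to compute the free rank $a_e$ of $R^{1/p^e}$ over $R$. Grading $R^{1/p^e}$ by $\tfrac{1}{p^e}\ZZ M$ and collecting the graded pieces according to the cosets of $\ZZ M$ gives a decomposition $R^{1/p^e}=\bigoplus_{w}U_w$ into $\ZZ M$-graded $R$-submodules indexed by $w\in\tfrac{1}{p^e}\ZZ M/\ZZ M$, where $U_w$ is the span of the monomials $\bx^\beta$ with $\beta\in\tfrac{1}{p^e}M$ and $\beta\equiv w$. Each $U_w$ is a rank-one torsion-free graded module over the domain $R$, hence indecomposable, and $U_w\cong R$ exactly when $U_w=\alpha+M$ is principal. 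Using Proposition \ref{reinterpret} (condition (iii) applied to $V=\tfrac{1}{p^e}M$), this principality is equivalent to $w$ having a representative $\alpha\in V_{p^e}(M)$, and such a representative is unique: two elements of $V_{p^e}(M)$ in the same coset differ by a unit of the pointed monoid $M$, hence coincide. Thus the free summands $U_w\cong R$ are in bijection with $V_{p^e}(M)$, and graded Krull--Schmidt over the graded-local ring $R$ (with $R_0=\kk$) gives
\[
a_e=\#\{w \mid U_w\cong R\}=|V_{p^e}(M)|.
\]

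With this identity the comparison is immediate. Applying Theorem \ref{ratioExists} to the increasing subsequence $\{p^e\}_e\subseteq\cA(M)$ yields that $\mpr(M)=\lim_{e\to\infty}|V_{p^e}(M)|/p^{e\,\mpdim(M)}=\lim_{e\to\infty}a_e/p^{e\,\mpdim(M)}$ exists and is positive, while by definition the $F$-splitting ratio equals $\lim_{e\to\infty}a_e/p^{e\,\sdim(R)}$ and is positive by \cite{AE}. Two finite positive limits with the same numerator $a_e$ force the exponents to agree, so $\mpdim(M)=\sdim(R)$ and $\mpr(M)$ equals the $F$-splitting ratio of $R$. For the final assertion, assume $M$ is normal; then $\mpdim(M)=\rk(M)=\dim(R)$ by Corollary \ref{msdCor}(3), hence $\sdim(R)=\dim(R)$ and the $F$-splitting ratio coincides with the $F$-signature $\lim_{e\to\infty}a_e/p^{e\dim R}$ \cite{Tucker}. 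Since in this case $\mps(M)=\lim_{e\to\infty}a_e/p^{e\,\rk(M)}=\mpr(M)$, we conclude that $\mps(M)$ equals the $F$-signature of $R$, its rationality and positivity being already supplied by Theorem \ref{ratioExists}.

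The main obstacle is the \emph{exact} equality $a_e=|V_{p^e}(M)|$, rather than merely the inequality $a_e\gs|V_{p^e}(M)|$ which is immediate from the splittings $\phi^{p^e}_\alpha$ of Proposition \ref{Vmsplit}. Establishing equality is precisely where the coset decomposition $R^{1/p^e}=\bigoplus_w U_w$, the indecomposability of each rank-one piece, and graded Krull--Schmidt are needed, together with the (routine but necessary) reduction to a perfect field so that the field-degree factor does not interfere with the normalization.
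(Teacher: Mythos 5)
Your proof is correct, and it arrives at the same two identifications the paper's argument rests on --- that the level-$e$ numerator of the $F$-splitting ratio equals $|V_{p^e}(M)|$, and that $\sdim(R)=\mpdim(M)$ --- but by genuinely different means. The paper works with the Aberbach--Enescu ideals $I_e=\{f\in R \mid \phi(f^{1/p^e})\in\m \ \text{for all}\ \phi\in\Hom(R^{1/p^e},R)\}$, asserts $\dim_\kk(R/I_e)=|V_{p^e}(M)|$, and identifies the splitting prime of $R$ with $\bx^{\cP(M)}$, which yields $\sdim(R)=\dim\left(R/\bx^{\cP(M)}\right)=\mpdim(M)$ directly. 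You instead compute the maximal free rank $a_e$ of $R^{1/p^e}$ via the coset decomposition $R^{1/p^e}=\bigoplus_w U_w$ over $\frac{1}{p^e}\ZZ M/\ZZ M$, the indecomposability of the rank-one torsion-free pieces, and graded Krull--Schmidt, with Proposition \ref{reinterpret}(iii) translating principality of $U_w$ into membership in $V_{p^e}(M)$; and you recover $\sdim(R)=\mpdim(M)$ indirectly by the observation that two positive finite limits with the same numerator force the exponents to coincide. Your route supplies an actual proof of the numerator identification, which the paper only asserts, at the cost of not exhibiting the splitting prime explicitly (so you get less structural information); the limit-comparison trick is clean and valid, though it leans on the Aberbach--Enescu existence and positivity theorem transported to the graded setting, where one should localize at the irrelevant maximal ideal (the free rank is unchanged under this localization). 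Your treatment of the non-perfect-field normalization is a little breezy, but it is the standard $[\kk:\kk^p]^e$ bookkeeping and does not affect correctness.
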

\begin{proof}
Let $p=\Char(\kk)$ and $\m$ be the maximal homogeneous ideal in $R$. 
Let 
$$
I_e= \{ f\in R \; |\; \phi(f^{1/p^e})\in\m \; \; \forall\phi\in\Hom(R^{1/p^e},R)\}.
$$
We note that $\dim_\kk(R/I_e)=|V_{p^e}(M)|$, and that  $\bx^{\cP(M)}$ is the splitting prime of $R$ \cite{AE}.
It follows that $\mpdim(M)=\sdim(R)$, and  the result follows for the ratios.

We now discuss the claim about $F$-signature. We have that  $\Char(\kk)\in\cA(M)$ 
 for normal monoids by Proposition \ref{redefnormal}.
The result follows because the
 $F$-signature coincides with the $F$-splitting ratio for strongly $F$-regular rings, and $R$ is strongly F-regular if and only if $M$ is normal.
\end{proof}

We end this section with a monoid version of Kunz's characterization of regularity \cite{Kunz}.
\begin{theorem}\label{ThmKunz}
Let $M$ be an affine monoid. Then, $|V_m(M)|=m^{\rk(M)}$ for some $m\in\ZZ_{>0}$ if and only if $M\cong \ZZ^t_{>0}$ for some $t$.
\end{theorem}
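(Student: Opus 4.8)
The plan is to prove the two implications separately: the backward direction by a one-line computation, and the forward direction by combining the asymptotic results of Section~\ref{growthSec} with Kunz's theorem in positive characteristic, routed through Proposition~\ref{Prop MPR SR}. Throughout write $d=\rk(M)$.

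For the backward direction, suppose $M\cong \ZZ_{\gs 0}^t$ (the free monoid of rank $t$). By Remark~\ref{VmIsom} the cardinality $|V_m(M)|$ is invariant under monoid isomorphism, so I may take $M=\ZZ_{\gs 0}^t$ with standard generators $e_1,\ldots,e_t$. A direct computation (the case $d_1=\cdots=d_t=1$ of the example of a monoid generated by $d_1e_1,\ldots,d_qe_q$ in Section~\ref{SplitSection}) shows $V_m(\ZZ_{\gs 0}^t)=\{(\alpha_1/m,\ldots,\alpha_t/m)\mid 0\ls \alpha_i\ls m-1\}$, which has exactly $m^t=m^{\rk(M)}$ elements for every $m\in\ZZ_{>1}$. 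The case $t=0$, where $M=\{0\}$, is immediate.

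For the forward direction, assume $|V_m(M)|=m^{d}$ for some $m\in\ZZ_{>1}$. Since $V_m(M)\neq\emptyset$, Remark~\ref{Vmnotempty} gives $m\in\cA(M)$ and $M$ is seminormal. The first key step is to upgrade this single equality to \emph{normality}. As $\cA(M)$ is multiplicative (Lemma~\ref{mnboth}), each power $m^t$ lies in $\cA(M)$, and iterating Part~(3) of Lemma~\ref{vminvmn} gives $|V_{m^t}(M)|\gs |V_m(M)|^t=(m^t)^{d}$, while Proposition~\ref{vmfinite} gives the reverse inequality; hence $|V_{m^t}(M)|=(m^t)^{d}$ for all $t$. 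Applying Theorem~\ref{ratioExists} to the increasing sequence $\{m^t\}_t\subseteq\cA(M)$ with $s=\mpdim(M)$, the limit $\lim_t |V_{m^t}(M)|/(m^t)^{s}=\mpr(M)$ exists and is finite and positive; but this limit equals $\lim_t (m^t)^{d-s}$, which (together with $s\ls d$ from Corollary~\ref{msdCor}(1)) forces $s=d$. Thus $\mpdim(M)=\rk(M)$, so $M$ is normal by Corollary~\ref{msdCor}(3), and moreover $\mps(M)=\mpr(M)=1$.

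Now I reduce to positive characteristic. Since $M$ is normal, $\cA(M)=\ZZ_{>1}$ by Remark~\ref{allmNorm}, so every prime $p$ lies in $\cA(M)$; fix such a $p$ and let $\kk$ have characteristic $p$. Proposition~\ref{Prop MPR SR} then identifies $\mps(M)=1$ with the $F$-signature of $R=\kk[M]$, and by Kunz's theorem together with the Huneke--Leuschke characterization of regularity through the $F$-signature \cite{Kunz,HunekeLeuschke} the ring $R$ is regular. The final step is the translation ``regular ring $\Rightarrow$ free monoid'': the indecomposable elements $\gamma_1,\ldots,\gamma_u$ form a minimal generating set of the homogeneous maximal ideal $\m=(\bx^{\gamma_1},\ldots,\bx^{\gamma_u})$, so the embedding dimension of $R$ equals $u$; regularity forces $u=\dim(R)=d$, whence $M=\sum_{i=1}^{d}\ZZ_{\gs 0}\gamma_i$ is generated by $d$ elements of a rank-$d$ group, which are therefore $\QQ$-linearly independent. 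Consequently $e_i\mapsto\gamma_i$ defines an isomorphism $\ZZ_{\gs 0}^{d}\to M$, as desired.

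The step I expect to demand the most care is precisely the conclusion of the forward direction: ruling out that $M$ is merely seminormal, or normal but not free. The asymptotic invariants $\mpdim$, $\mpr$, and $\mps$ do the first part of this work, and the identification with the $F$-signature in Proposition~\ref{Prop MPR SR} is what lets Kunz's theorem upgrade the numerical equality ``$\mps(M)=1$'' to genuine regularity, and hence to freeness. A purely combinatorial, characteristic-free alternative is available but more delicate: by Lemma~\ref{BMlemma}(4) one has $\mps(M)=\vol_{\ZZ M}(\Delta)=1$ for the region $\Delta$ of Notation~\ref{regionNorm}; since each supporting form $v_i$ is integer-valued on $\ZZ M$, no two points of $\Delta$ differ by a nonzero element of $\ZZ M$, so the $\ZZ M$-translates of $\Delta$ are pairwise disjoint, and relative volume $1$ forces them to tile $\RR M$. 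One would then show that a half-open region of this particular shape can be a fundamental domain only when $C(M)$ is a unimodular simplicial cone, again giving $M\cong\ZZ_{\gs 0}^{d}$. I would present the characteristic-$p$ reduction as the main line, since it is shorter and reuses Proposition~\ref{Prop MPR SR} directly.
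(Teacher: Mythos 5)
Your proposal is correct and follows essentially the same route as the paper: upgrade $|V_m(M)|=m^{\rk(M)}$ to all powers $m^t$ via Lemma \ref{vminvmn}(3) (you rightly note the matching upper bound from Proposition \ref{vmfinite}), deduce $\mps(M)=1$ and hence normality via Theorem \ref{ratioExists} and Corollary \ref{msdCor}, then pass to characteristic $p$ through Proposition \ref{Prop MPR SR} and invoke the Huneke--Leuschke/Kunz characterization of regularity to force the embedding dimension to equal $\rk(M)$. The only differences are cosmetic: you spell out the backward direction and the final ``$d$ independent generators $\Rightarrow$ free'' step, which the paper leaves implicit.
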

\begin{proof}
Since $|V_m(M)|=m^{\rk(M)}$, we have that $|V_{m^t} (M)|=m^{t\rk(M)}$ 
by Lemma \ref{vminvmn} (3).
Then, 
$$
\mps(R)=\lim\limits_{t\to \infty}\frac{|V_m(M)|}{m^{\rk(M)}}=1.
$$
Hence,
$\mpdim(M)=\rk(M)$, and so, $M$ is a normal monoid by Corollary \ref{msdCor}.
We have that  $\mps(R)=1$.
Then,  $\FF_p[M]$ is a regular graded $\FF_p$-algebra, because $s(R)=1$ by Proposition \ref{Prop MPR SR} and the characterization of regular rings via $F$-signature \cite[Corollary 16]{HunekeLeuschke}.
Moreover, $M$ has a set of $\rk(M)$ minimal generators. 
Hence, $M\cong \ZZ^t_{>0}$.
\end{proof}

\section*{Acknowledgments}
We thank the reviewer for the careful reading of our paper and for the suggested improvements. The first author was partially supported by the PRIN 2020 project 2020355B8Y ``Squarefree Gröbner degenerations, special varieties and related topics". The second author was  supported by NSF Grant DMS \#2001645/2303605. The third author was  supported by CONACyT Grant \#284598.

\bibliographystyle{alpha}
\bibliography{References}

\end{document}